\newtheorem{theorem}{Theorem}
\newtheorem{corollary}[theorem]{Corollary}
\newtheorem{definition}[theorem]{Definition}
\newtheorem{lemma}[theorem]{Lemma}
\newtheorem{remark}[theorem]{Remark}
\newcommand{\RR}{\mathbb{R}}
\newcommand{\ZZ}{\mathbb{Z}}
\newcommand{\NN}{\mathbb{N}}
\newcommand{\dive}{\operatorname{div}}
\newcommand{\dps}{\displaystyle}
\newcommand{\eps}{\varepsilon}
\begin{document}

\title{An MsFEM type approach for \\ perforated domains}
\author{Claude Le Bris$^1$, Fr\'ed\'eric Legoll$^1$, Alexei Lozinski$^{2}$\\
{\footnotesize $^1$ \'Ecole Nationale des Ponts et
Chauss\'ees,}\\
{\footnotesize 6 et 8 avenue Blaise Pascal, 77455 Marne-La-Vall\'ee
Cedex 2, FRANCE}\\
{\footnotesize and}\\
{\footnotesize  INRIA Rocquencourt, MICMAC project-team,}\\
{\footnotesize  78153 Le Chesnay Cedex, FRANCE}\\
{\footnotesize\tt lebris@cermics.enpc.fr, legoll@lami.enpc.fr}\\
{\footnotesize $^2$ Formerly at Institut de Math\'ematiques de Toulouse,}\\
{\footnotesize Universit\'e Paul Sabatier,}\\
{\footnotesize 118 route de Narbonne, 31062 Toulouse Cedex 9, FRANCE}\\
{\footnotesize Now at Laboratoire de Math\'ematiques CNRS UMR 6623,}\\
{\footnotesize Universit\'e de Franche-Comt\'e, 16 route de Gray, 25030
  Besan\c con Cedex, FRANCE}\\
{\footnotesize \tt alexei.lozinski@univ-fcomte.fr} 
}

\maketitle

\begin{abstract}
 We follow up on our previous work~\cite{companion-article} where we
 have studied a multiscale finite element (MsFEM) type method in
 the vein of the classical Crouzeix-Raviart finite element method that is
 specifically adapted for highly oscillatory elliptic problems. We adapt
 the approach  to address here a multiscale problem on a perforated
 domain. An additional ingredient of our approach is the enrichment of
 the multiscale
 finite element space using bubble functions.  We first establish a
 theoretical  error
 estimate. We next show that, on the
 problem we consider, the approach we propose outperforms all
 dedicated existing variants of MsFEM we are aware of.
\end{abstract}

% \newpage
% \tableofcontents
% \newpage

\section{Introduction}
\label{sec:introduction}

\subsection{Generalities}
\label{ssec:Generalities}

We consider a bounded domain~$\Omega \subset \RR^d$ and a set $B_\varepsilon$ of
perforations within this domain. The perforations are supposedly small
and in extremely large a number. The parameter~$\varepsilon$ stands here
for a typical distance between the perforations. We denote
by~$\Omega_\varepsilon=\Omega \setminus \overline{B_\varepsilon}$ the perforated
domain (see Figure~\ref{fig:perforation}). We then consider the following problem: find $u :
\Omega_\varepsilon \rightarrow \mathbb{R}$, solution of
\begin{equation}
\label{eq:genP}
-\Delta u = f \text{ in $\Omega_\varepsilon$},
\quad
u = 0\text{ on $\partial \Omega_\varepsilon$},
\end{equation}
where $f:\Omega \rightarrow \mathbb{R}$ is a given function, assumed
sufficiently regular on $\Omega$. It is important to note
that the homogeneous Dirichlet boundary condition on $\partial \Omega_\varepsilon$ (and hence on the boundary $\Omega \cap \partial
B_\varepsilon$ of the perforations) is a crucial feature of the problem we
consider. Our academic enterprise is motivated by various physically relevant
problems, for instance in fluid mechanics, atmospheric modeling, electrostatic devices, \dots A different boundary condition, such as a 
Neumann boundary condition, would lead to completely different
theoretical considerations and, eventually, a different numerical
approach.
The consideration of~\eqref{eq:genP} can also be seen as a
step toward the resolution of the Stokes problem on perforated
domains. In that latter case, homogeneous  Dirichlet boundary conditions on the
perforations are typical for many applicative contexts.

\medskip

\begin{figure}[htbp]
\psfrag{perf}{Perforations $B_\eps$}
\psfrag{bp}{Boundary $\partial B_\eps$ of the perforations}
\psfrag{dom}{Domain $\Omega_\eps$}
\centerline{
\includegraphics[width=7truecm]{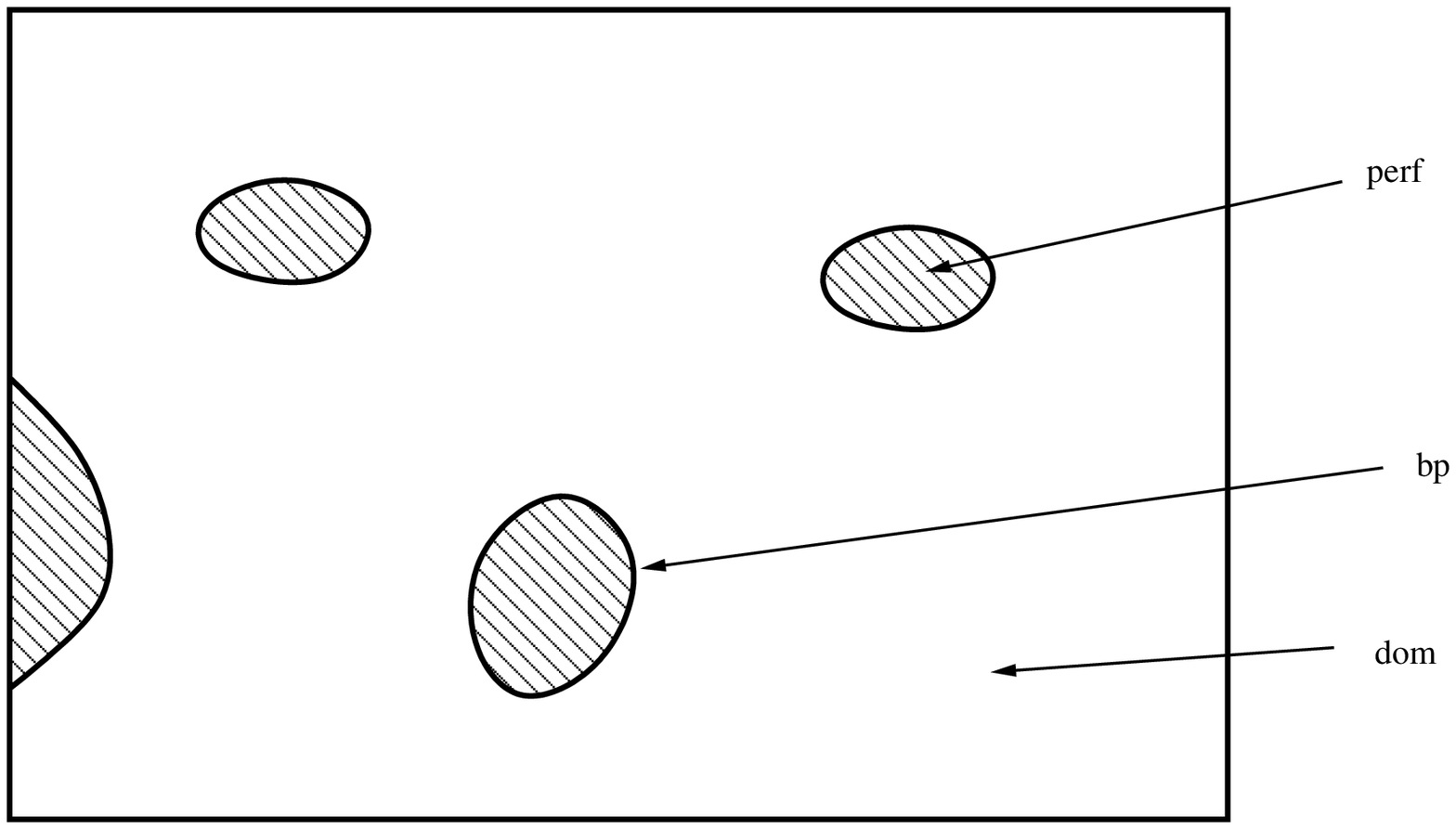}
}
\caption{The domain $\Omega$ contains perforations $B_\varepsilon$. The perforated domain is $\Omega_\varepsilon=\Omega \setminus \overline{B_\varepsilon}$. The boundary of $\Omega_\varepsilon$ is the union of $\partial B_\varepsilon \cap \overline{\Omega_\varepsilon}$ (the part of the boundary of the perforations that is included in $\overline{\Omega_\varepsilon}$) and of $\partial \Omega \cap \overline{\Omega_\varepsilon}$.
\label{fig:perforation}}
\end{figure}
 
\medskip

Our purpose here is to propose and study a dedicated multiscale finite element
method (MsFEM). To this end, we consider the variant
of MsFEM using \emph{Crouzeix-Raviart type} finite
elements~\cite{Crouzeix-Raviart} which we have employed 
and studied for a prototypical multiscale elliptic problem
in~\cite{companion-article} and we adapt the approach for the particular
setting under consideration here. The major adaptation we perform (and
thus one of the added values with respect to our earlier
work~\cite{companion-article}) is the addition of \emph{bubble
  functions} to the finite element basis set. 
Let us briefly comment upon the motivation for these two ingredients:
Crouzeix-Raviart type finite element on the one hand, and addition of
bubble functions on the other hand.
 
The motivation for using
Crouzeix-Raviart type finite elements stems from our wish to devise a
numerical approach as accurate as possible for a limited
computational workload. In general, it is well known that, for the construction of
multiscale finite elements, boundary conditions set on the edges
(facets) of mesh elements for the definition of the basis functions play
a critical role for the eventual accuracy and efficiency of the
approach. Using Crouzeix-Raviart type elements
(see~\cite{Crouzeix-Raviart} for their original introduction) gives a
definite flexibility. In short, the continuity of our multiscale finite
element basis set functions accross the edges of the mesh is enforced
only in a weak sense by requiring that the average of the jump
vanishes on each edge. This ``weak'' continuity condition leads to some 
natural boundary conditions for the multiscale basis functions (see Section~\ref{ssec:msfem}). The
nonconforming approximation obtained in this manner proves to be very
effective, see~\cite{companion-article}.
The above issue regarding boundary conditions on the mesh elements is all the more crucial when dealing with perforated
computational domains. Indeed,  we want the approach we construct to
be as insensitive as
possible  to the possible intersections between element edges
and perforations. The long term motivation for this is the wish to
address problems where the perforations can be very heterogeneously
distributed (think, say, of nonperiodic, or even
random arrays of perforations). The {\it ad hoc} construction of a mesh that
(essentially) avoids intersecting the perforations is then prohibitively
difficult.  

The second ingredient of our approach is the addition of bubble
functions to the finite element space. As illustrated using a simple
one-dimensional analysis in Section~\ref{ssec:1D}, and demonstrated with an
extensive set of numerical tests in Section~\ref{sec:Numerical-tests}
for all MsFEM type approches we implemented, the addition of bubble
functions is definitely benefitial for the overall accuracy of the
approach. 

\medskip

The literature on the types of problems and techniques considered here is of course too vast to be recalled here. A
quite general review is contained in our earlier
work~\cite{companion-article}. We however wish to mention here the
references~\cite{arbogast-review,Efendiev-Hou-book} for the general
background on MsFEM, and the works~\cite{cdz,cm,cs,henning,hornung,lions1980}
specifically addressing problems on perforated domains, either from a
theoretical or a numerical standpoint. 

\medskip

The outline of our article is as follows. As already briefly mentioned,
the rest of this introduction, namely Section~\ref{ssec:1D}, is devoted
to the study of a simple one-dimensional situation. From
Section~\ref{sec:presentation2D} on, we work in two dimensions
throughout the article, both for the analysis and for the numerical
tests of the final section. We however emphasize that, of course, the
approach can be applied to the three-dimensional context and that, most
likely, the theoretical analysis we provide here can also be extended to
the three dimensional case (Note that our analysis
in~\cite{companion-article} was performed in both the two and three
dimensional settings). We will not proceed in this direction
here. In addition and for simplicity, we assume that $\Omega$ is a polygonal domain. Section~\ref{sec:presentation2D} presents our finite element
approach and the main result of numerical analysis
(Theorem~\ref{theo:main}) we are able to prove, under restrictive
assumptions made precise below (in particular, periodicity of the
perforations is assumed, although, in practice, the approach is {\em not} restricted to this setting). Section~\ref{sec:preliminaries} prepares
the ground for the proof of this main result, performed in
Section~\ref{sec:proof}. Our final Section~\ref{sec:Numerical-tests}
then presents a comprehensive set of numerical experiments. When using our
MsFEM approach on a perforated domain, there are essentially
three ``parameters'': (i) the boundary conditions imposed to define the
MsFEM basis functions, (ii) the addition, or not, of bubble functions and
(iii) the possible intersections of the perforations with the edges (facets)
of mesh elements. Assessing the validity of our approach requires to
compare it with the other existing approaches for all possible
combinations of the above three ``parameters''. This is what we complete in
Section~\ref{sec:Numerical-tests}. Our tests demonstrate that the
combination of Crouzeix-Raviart type finite elements and bubble
functions allows to outperform all the other existing approaches on the
problem considered here in a way that is essentially insensitive to
intersections of the mesh with the perforations. 

\subsection{A one-dimensional situation}
\label{ssec:1D}

In order to illustrate the specificity of multiscale perforated
problems, and to already discover some interesting features, we first
consider an academic one-dimensional setting. Consider the
one-dimensional version of the boundary value problem~\eqref{eq:genP}
for~$\Omega =(0,L)$, $B_\varepsilon$ the set of
segments $B_\varepsilon =\cup_{j=1}^J (a_j,b_j)$ with
$0<a_1<b_1<a_2<b_2<\cdots <L$. We suppose that the gaps between the
perforations are of length at most $\varepsilon$, that is $a_1 \leq
\varepsilon$, $a_2-b_1 \leq \varepsilon$, $a_3-b_2 \leq \varepsilon$,
\dots, $L-b_J \leq \varepsilon$. Other than that, we do not put any
assumption on the geometry of these one-dimensional perforations. Note
that in particular (and in contrast to the analysis we
perform later on in this 
article) we do \emph{not} assume any periodicity of the
perforations. The weak form of our problem then reads:
find $u \in H_0^1(\Omega_\varepsilon)$ such that
\begin{equation}
\forall v \in H_0^1(\Omega_\varepsilon), 
\quad
a(u,v)
=
\int_{\Omega_\varepsilon} fv,
\label{1Dw}
\end{equation}
where, we recall, $\Omega_\eps = \Omega \setminus
B_\eps$ denotes the perforated domain and where
$$
a(u,v)
=
\int_{\Omega_\varepsilon} u' \, v'.
$$
We now divide $\Omega$ into $N$
segments $K_i=[x_{i-1},x_i]$, $i=1,\ldots,N$, by the nodes 
$0=x_0<x_1<\cdots <x_N=L$, define the mesh size $H=\max
|x_i-x_{i-1}|$, and consider the multiscale finite element space adapted
to the perforated domain
$$
V_H
=
\left\{
\begin{array}{c}
u_H \in C^0(\Omega) \text{ such that $u_H=0$ on $B_\varepsilon \cup
  \partial \Omega$ and}
\\
u_H^{\prime \prime}=C_i \text{ in $K_i \cap \Omega_\varepsilon$,
  $i=1,\ldots,N$, for some constants $C_i$}
\end{array}
\right\}.
$$
Note that the domain $K_i \cap
\Omega_\varepsilon$ may be not connected. We
nevertheless assume that $u_H^{\prime \prime}$ is equal to the {\em
  same} constant $C_i$ on all the connected components of 
$K_i \cap \Omega_\varepsilon$.

\begin{remark}
In the one-dimensional setting, the Crouzeix-Raviart type boundary condition that we consider in this work simply amounts to a continuity condition at the mesh nodes. This is why we require that $u_H \in C^0(\Omega)$ in the above definition of $V_H$. This observation holds for many variants of MsFEM, including the oversampling variant, which, alike the Crouzeix-Raviart variant we introduce here, uses non-conforming finite elements. In this respect, the one-dimensional setting is not typical. 
\end{remark}

The Galerkin approximation of the solution to
problem~\eqref{1Dw} is then introduced as the solution $u_H \in V_H$ to
\begin{equation}
\forall v_H \in V_H,
\quad
a(u_H,v_H) = \int_{\Omega_\varepsilon} f v_H.
\label{1Dh}
\end{equation}
Readers familiar with the MsFEM approach will notice that $V_H$ contains more functions than the usual MsFEM basis set,
which would consist here in taking $u_H^{\prime \prime}= 0$ (rather than an
arbitrary constant $C_i$) on $K_i \cap \Omega_\varepsilon$. 

\medskip

A convenient
generating family for the space $V_H$ may be constructed as follows. First we
associate a function $\Phi_i$ to any internal node $x_i$ by solving
\begin{eqnarray*}
\text{supp}\,\Phi_i & \subset & (x_{i-1},x_{i+1}), 
\\
\Phi_i^{\prime \prime} & = & 0 
\text{ in $(x_{i-1},x_i) \cap \Omega_\varepsilon$ and in
$(x_i,x_{i+1}) \cap \Omega_\varepsilon$}, 
\\
\Phi_i & = & 0 \text{ in $B_\varepsilon$}, 
\\
\Phi_i(x_i) &=& 1\text{ if $x_i \in \Omega_\varepsilon$
or 0 otherwise}.
\end{eqnarray*}
Note that this construction yields $\Phi_i \equiv 0$ if the node $x_i$ lies
inside a perforation (see Fig.~\ref{fig:phi_i}). Second, we associate a
function $\Psi_i$ to any segment $K_i=[x_{i-1},x_i]$ by solving (see
Fig.~\ref{fig:psi_i}) 
\begin{eqnarray*}
\text{supp} \, \Psi_i & \subset & (x_{i-1},x_i), 
\\
-\Psi_i^{\prime \prime} & = & 1 \text{ in $K_i \cap
  \Omega_\varepsilon$}, 
\\
\Psi_i & = & 0 \text{ in $B_\varepsilon$}.
\end{eqnarray*}
The functions $\Phi_i$ ($1 \leq i \leq N-1$) and $\Psi_j$ ($1 \leq j \leq N$) are linearly independent (except for the trivial case when $\Phi_i \equiv 0$), and we obviously have
$$
\text{span} \left\{ \Phi_1,\ldots,\Phi_{N-1},\Psi_1,\ldots,\Psi_N
\right\} \subset V_H.
$$
In turn, any $u \in V_H$ can be written $u = \sum_{j=1}^N C_j \Psi_j + \sum_{i=1}^{N-1} u(x_i) \Phi_i$. We thus have 
$$
V_H = \text{span} \left\{ \Phi_1,\ldots,\Phi_{N-1},\Psi_1,\ldots,\Psi_N
\right\},
$$
which implies that the space $V_H$ is of dimension at most $2N-1$. 

\begin{figure}[htbp]
\psfrag{xim1}{$x_{i-1}$}
\psfrag{xi}{$x_i$}
\psfrag{xip1}{$x_{i+1}$}
\psfrag{phii}{$\Phi_i$}
\centerline{
\includegraphics[width=7truecm]{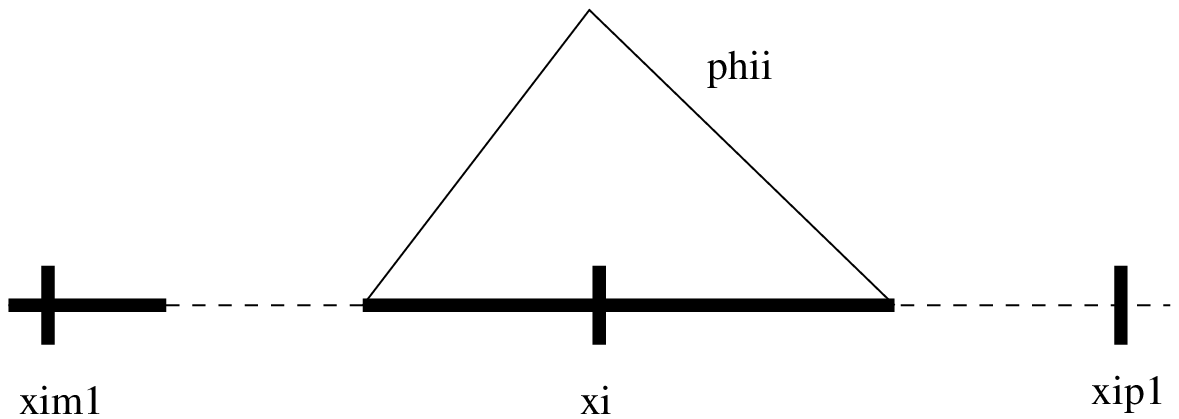}
\hspace{1cm}
\includegraphics[width=7truecm]{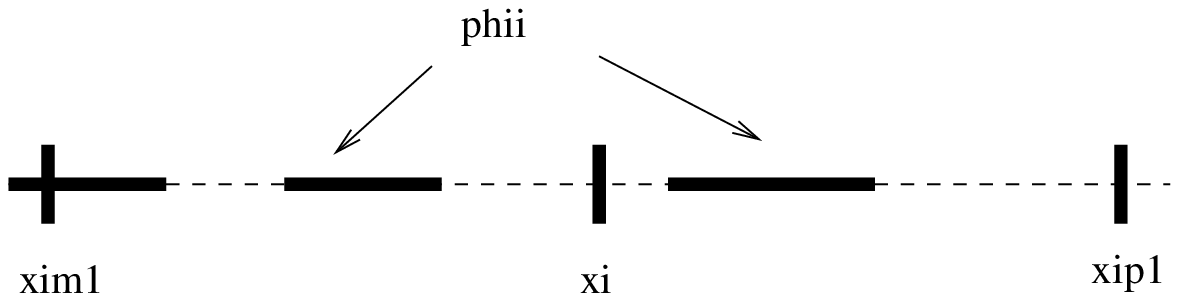}
}
\caption{Basis function $\Phi_i$ (Solid line: domain $\Omega_\eps$;
  dashed line: perforations $B_\eps$). Left: case when 
$x_i \in \Omega_\eps$. Right: case when $x_i \notin \Omega_\eps$, for which
  $\Phi_i \equiv 0$.
\label{fig:phi_i}}
\end{figure}

\begin{figure}[htbp]
\psfrag{xim1}{$x_{i-1}$}
\psfrag{xi}{$x_i$}
\psfrag{xip1}{$x_{i+1}$}
\psfrag{psii}{$\Psi_i$}
\centerline{
\includegraphics[width=7truecm]{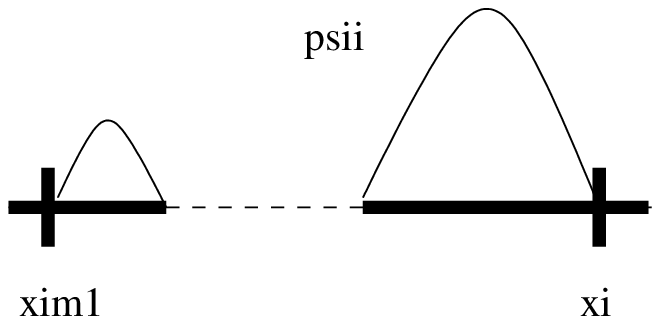}
\hspace{1cm}
\includegraphics[width=7truecm]{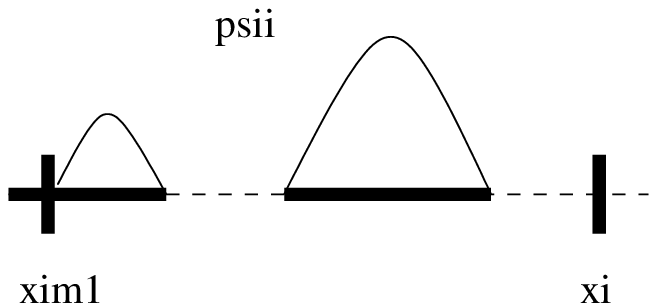}
}
\caption{Basis function $\Psi_i$ (Solid line: domain $\Omega_\eps$;
  dashed line: perforations $B_\eps$). Left: case when 
$x_i \in \Omega_\eps$. Right: case when $x_i \notin \Omega_\eps$. In
both cases, $\Psi_i \neq 0$.
\label{fig:psi_i}}
\end{figure}

It is interesting to note that the functions~$\Psi_i$, which act as
\emph{bubble functions}, are necessary to generate an efficient approximation
space. The reason is evident in the one-dimensional situation,
since, in the absence of such a bubble (or of a basis function playing a
similar role), there is no way to recover a good approximation quality between
two consecutive perforations if no node is actually present there. The
numerical solution would systematically vanish in such a region (see
Fig.~\ref{fig:phi_i}, right part). 
In higher dimensions, the phenomenon is less accute (since perforations,
unless of a particular shape, cannot isolate regions of the space from
the neighborhood) but it is still, to some extent, relevant. We will
observe the definite added value of bubble functions in our numerical
tests of Section~\ref{sec:Numerical-tests}.

\medskip

We then have the following (simple) numerical analysis result.

\begin{theorem}
Assume that the right-hand side $f$ in~\eqref{1Dw} satisfies $f \in
H^1(\Omega)$.  
Then the Galerkin solution $u_H$ of~\eqref{1Dh} satisfies the error estimate 
\begin{equation}
|u-u_H|_{H^1(\Omega_\eps)} \leq C\varepsilon H \| f' \|_{L^2(\Omega)},
\label{eq:1Destimate}
\end{equation}
where $| \cdot |_{H^1(\Omega_\eps)}$ is the energy norm associated to the
bilinear form $a$:
$$
\forall v \in H^1(\Omega_\eps), \quad
| v |_{H^1(\Omega_\eps)} := \sqrt{a(v,v)} = 
\sqrt{\int_{\Omega_\varepsilon} (v')^2}.
$$
\end{theorem}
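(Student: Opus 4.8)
The plan is to exploit the fact that, in this one-dimensional setting, $V_H \subset H^1_0(\Omega_\eps)$, so that the method is a \emph{conforming} Galerkin approximation of the symmetric coercive form $a$. Since the energy norm $|\cdot|_{H^1(\Omega_\eps)}$ is exactly $\sqrt{a(\cdot,\cdot)}$, Galerkin orthogonality yields the best-approximation property
\begin{equation*}
|u - u_H|_{H^1(\Omega_\eps)} = \min_{v_H \in V_H} |u - v_H|_{H^1(\Omega_\eps)} \le |u - v_H|_{H^1(\Omega_\eps)}
\end{equation*}
for every $v_H \in V_H$. It therefore suffices to exhibit one convenient interpolant $v_H$ and to bound $|u - v_H|_{H^1(\Omega_\eps)}$.

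First I would construct the interpolant. For each element $K_i$, denote by $\bar f_i$ the mean value of $f$ over $K_i$, and define $v_H$ by requiring that $v_H$ vanish on $B_\eps$, that $v_H(x_j) = u(x_j)$ at every node $x_j$, and that $-v_H'' = \bar f_i$ on $K_i \cap \Omega_\eps$. Since $\bar f_i$ is a single constant per element, this prescription is compatible with the definition of $V_H$ (the same curvature is imposed on all connected components of $K_i \cap \Omega_\eps$), and on each component $v_H$ is the unique parabola determined by its curvature and its two endpoint values; continuity at the nodes follows from the matching condition $v_H(x_j) = u(x_j)$. Hence $v_H$ is well defined and belongs to $V_H$.

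Next I would localise the error $e := u - v_H$. Let $I$ be any connected component of $K_i \cap \Omega_\eps$: its endpoints are either points of $\partial B_\eps$ (where $e=0$, since both $u$ and $v_H$ vanish on $B_\eps$) or nodes $x_j$ (where $e=0$ by the interpolation condition), so $e$ vanishes at both ends of $I$, while $-e'' = f - \bar f_i$ on $I$. Integrating by parts and using $e=0$ on $\partial I$ gives $\int_I (e')^2 = \int_I (f-\bar f_i)\, e \le \|f-\bar f_i\|_{L^2(I)}\,\|e\|_{L^2(I)}$. Because the length of $I$ is at most $\eps$ (the gaps between perforations are bounded by $\eps$), the Poincaré inequality $\|e\|_{L^2(I)} \le (\eps/\pi)\,\|e'\|_{L^2(I)}$ applies and yields $\|e'\|_{L^2(I)} \le (\eps/\pi)\,\|f-\bar f_i\|_{L^2(I)}$. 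Squaring and summing over the components $I$ of $K_i\cap\Omega_\eps$, their disjointness gives $\sum_I \|f-\bar f_i\|^2_{L^2(I)} = \|f-\bar f_i\|^2_{L^2(K_i\cap\Omega_\eps)} \le \|f-\bar f_i\|^2_{L^2(K_i)}$, so no factor counting the perforations ever appears. The Poincaré--Wirtinger inequality on $K_i$ (of length at most $H$) then provides $\|f-\bar f_i\|_{L^2(K_i)} \le (H/\pi)\,\|f'\|_{L^2(K_i)}$ — here the hypothesis $f\in H^1(\Omega)$ is used — and summing over all elements produces
\begin{equation*}
|e|^2_{H^1(\Omega_\eps)} = \sum_i \sum_{I\subset K_i} \|e'\|^2_{L^2(I)} \le \frac{\eps^2 H^2}{\pi^4}\,\|f'\|^2_{L^2(\Omega)}.
\end{equation*}
Combined with the best-approximation property, this gives the claimed estimate~\eqref{eq:1Destimate}.

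The only genuine subtlety is the interplay of the two scales: the homogeneous boundary data on each component localises the error to an interval of length $\le \eps$, furnishing one factor $\eps$ through Poincaré, while the piecewise-constant approximation of $f$ on elements of size $H$ furnishes the factor $H$ through Poincaré--Wirtinger, the two multiplying to give $\eps H$. One must in particular check that the interpolant truly lies in $V_H$ (a single curvature per element, shared across all connected components of $K_i\cap\Omega_\eps$) and that the summation over the possibly numerous components does not degrade the bound, which is precisely what their disjointness guarantees.
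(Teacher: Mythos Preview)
Your proof is correct and follows essentially the same approach as the paper's: C\'ea's lemma, the same interpolant $v_H$ (nodal values $u(x_j)$ together with $-v_H''=\Pi_H f$ on each $K_i\cap\Omega_\eps$), integration by parts, a Poincar\'e inequality on intervals of length at most $\eps$, and the $O(H)$ approximation of $f$ by its piecewise constant mean. The only cosmetic difference is that the paper integrates by parts on the full gaps $(b_j,a_{j+1})$ (and must then remark that the jumps of $e'$ at interior grid points contribute nothing because $e$ vanishes there), whereas you localise directly to the connected components of $K_i\cap\Omega_\eps$, which makes the integration by parts immediate and even yields the explicit constant $1/\pi^2$.
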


\medskip

The factor~$\varepsilon$ in the right-hand side of~\eqref{eq:1Destimate}
needs to be understood as follows. It turns out
that, for the category of problems~\eqref{eq:genP} we consider, the
exact solution~$u$ 
(and thus, correspondingly, its numerical approximation~$u_H$) is
of size~$\varepsilon$ in $H^1$ norm for $\varepsilon$ small, as is proved by
homogenization theory and will be recalled --for the periodic setting--
in Section~\ref{ssec:homogenization} below (see~\eqref{eq:lions} and~\eqref{eq:lions-general}). Once 
this scale factor is accounted for, the estimate~\eqref{eq:1Destimate}
shows that the numerical approach is first order accurate in the
meshsize~$H$, with a prefactor $C\, \| f' \|_{L^2(\Omega)}$ that is
\emph{independent} of the size~$\varepsilon$ of the geometric
oscillations.

\medskip

\begin{proof}
We see from~\eqref{1Dw} and~\eqref{1Dh} that
$$
\forall v_H \in V_H, \quad a(u-u_H,v_H) = 0.
$$
Consequently, $u_H$ is the orthogonal projection of $u$ on $V_H$, 
where by \emph{orthogonality} we mean orthogonality for the scalar product
defined by the bilinear form~$a$. We therefore have 
\begin{equation}
|u-u_H|_{H^1(\Omega_\eps)}=\inf_{v_H \in V_H} |u-v_H|_{H^1(\Omega_\eps)}.
\label{cea}
\end{equation}
Proving~\eqref{eq:1Destimate} therefore amounts to proving the
inequality for at 
least one function $v_H \in V_H$. We take $v_H \in V_H$ such that 
$v_H(x_i)=u(x_i)$, $i=0,1,\ldots,N$, and $-v_H^{\prime \prime} = \Pi_H
f$ on each $K_i \cap \Omega_\varepsilon$,
where $\Pi_H f$ is the $L^2$-orthogonal projection of $f$ on the
space of piecewise constant functions. Consider then the
interpolation error $e=u-v_H$. We remark that
$$
\left\{ 
\begin{array}{l}
-e^{\prime \prime}= f-\Pi_H f \ \text{ on each $(x_{j-1},x_j) \cap
\Omega_\varepsilon$, \ \ $1 \leq j \leq N$},
\\ 
e(x_j)=0, \quad j=0,\ldots,N.
\end{array}
\right.
$$
Denoting by $b_0=0$ and $a_{J+1}=L$, we have
\begin{eqnarray}
|e|_{H^1(\Omega_\eps)}^2
=
\int_{\Omega_\varepsilon} |e'|^2
=
\sum_{j=0}^J \int_{b_j}^{a_{j+1}} |e'|^2
&=&
- \sum_{j=0}^J \int_{b_j}^{a_{j+1}} e^{\prime \prime} \, e
\nonumber\\
&=&
\sum_{j=0}^J \int_{b_j}^{a_{j+1}} (f-\Pi_H f) \, e.  
\label{boundseg}
\end{eqnarray}
Note that the integration by parts here does not give rise to any boundary or
jump terms because $e$ vanishes at all the points $a_j$, $b_j$ and also at
the grid points $x_i$ where $e'$ is discontinuous. We now apply the
Cauchy-Schwarz and Poincar\'e inequalities on each segment $(b_j,a_{j+1})$
and note that the constant in the latter inequality scales as the
length of the segment, that is at most $\varepsilon$ (this fact is obvious in dimension $d=1$; see~\eqref{eq:poincare_perfore} and Appendix~\ref{sec:proof_poincare} below for a general argument). 
We thus deduce from~\eqref{boundseg} that
\begin{eqnarray*}
|e|_{H^1(\Omega_\eps)}^2 
& \leq & 
\sum_{j=0}^J \| f - \Pi_H f \|_{L^2(b_j,a_{j+1})} \ 
\| e \|_{L^2(b_j,a_{j+1})} 
\\
& \leq & 
C \varepsilon \sum_{j=0}^J \| f-\Pi_H f \|_{L^2(b_j,a_{j+1})} \ 
|e|_{H^1(b_j,a_{j+1})} 
\\
&\leq &
C \varepsilon \| f-\Pi_H f \|_{L^2(\Omega_\eps)} \ |e|_{H^1(\Omega_\eps)}.
\end{eqnarray*}
Factoring out $|e|_{H^1(\Omega_\eps)}$, and using a standard finite element
approximation estimate of $f-\Pi_H f$, we deduce that
$$
| u - v_H |_{H^1(\Omega_\eps)}
= 
|e|_{H^1(\Omega_\eps)}
\leq 
C \varepsilon \| f - \Pi_H f \|_{L^2(\Omega)}
\leq
C \varepsilon H \| f' \|_{L^2(\Omega)}.
$$
Collecting this bound with~\eqref{cea}, we obtain~\eqref{eq:1Destimate}.
\end{proof}

\section{Presentation of our MsFEM approach in the 2D setting}
\label{sec:presentation2D}

\subsection{MsFEM \`a la Crouzeix-Raviart with bubble functions}
\label{ssec:msfem}

As mentioned in the introduction, we assume henceforth that the ambient
dimension is $d=2$ 
and that $\Omega$ is a polygonal domain. 
We define a mesh $\mathcal{T}_H$ on $\Omega$, i.e. a decomposition of
$\Omega$ into polygons each of diameter at most $H$, and
denote $\mathcal{E}_H$ the set of all the internal edges of
$\mathcal{T}_H$. Note that we mesh $\Omega$ and not the perforated domain
$\Omega_\eps$. This allows us to use coarse elements (independently of the fine scale present in the geometry of $\Omega_\eps$), and leaves us with a lot of flexibility. 
The mesh does not have to be consistent with the perforations $B_\eps$. Some nodes may be in $B_\eps$, and likewise some edges may intersect $B_\eps$.

We also assume that the mesh does not have any hanging
nodes. Otherwise stated, each internal edge is shared by
exactly two elements of 
the mesh. In addition, $\mathcal{T}_H$ is assumed a regular
mesh in the following sense: for any mesh element
$T\in\mathcal{T}_H$, there exists a smooth one-to-one and onto mapping
$K:\overline{T} \to T$ where $\overline{T} \subset \RR^d$ is the
reference element (a polygon of fixed unit
diameter) and $\| \nabla K \|_{L^\infty} \leq CH$, 
$\| \nabla K^{-1} \|_{L^\infty} \leq CH^{-1}$, $C$
being some universal constant independent of~$T$, to which we will refer
as the regularity parameter of the mesh. This assumption is used e.g. in the proof of Lemma~\ref{lem:trace} below.
To avoid some technical complications, we also assume that
the mapping $K$ corresponding to each $T\in\mathcal{T}_H$ is affine on
every edge of $\partial\overline{T}$. Again, this assumption is used e.g. in the proof of Lemma~\ref{lem:trace}. In the following and
to fix the ideas, we will have in mind a
mesh consisting of {\em triangles}, which satisfies the minimum angle
condition to ensure the mesh is regular in the sense defined above (see
e.g.~\cite[Section~4.4]{brenner}). 
We will repeatedly use the notation and
terminology (triangle, \dots) of this setting, although the
approach carries over to quadrangles.

\medskip

The idea behind the MsFEM \`{a} la Crouzeix-Raviart is to 
require the continuity of the (here highly oscillatory) finite element
functions in the sense of averages on the edges. 
We have extensively studied this approach
in~\cite{companion-article}. For the specific setting we 
address here, we add another feature to the numerical approach. Based in
particular on the intuition provided by the one-dimensional case
examined in the previous section, we add \emph{bubble functions} to our
discretization space.

\paragraph{Functional spaces}

To construct our MsFEM space, we proceed as in our previous work~\cite{companion-article}. We introduce the space
$$
W_H = \left\{
\begin{array}{c}
u \in L^2(\Omega) \text{ such that } u|_T \in H^1(T)
\text{ for any } T\in \mathcal{T}_H,
\\ \noalign{\vskip 3pt}
\dps
\int_E [[u]] =0 \text{ for all } E\in \mathcal{E}_H,
\quad u=0 \text{ in $B_\eps$ and on $\partial \Omega$}
\end{array}
\right\},
$$
where~$[[u]]$ denotes the jump of~$u$ across an edge. Note that, as is standard, the condition $u=0$ on $\partial \Omega$ makes sense as
$\Omega$ is a polygonal domain and $\partial \Omega$ belongs to the mesh
edges. We next introduce the subspace
$$
W_H^0 = \left\{
\begin{array}{c}
\dps
u \in W_H \text{ such that } 
\int_E u =0 \text{ for all } E\in \mathcal{E}_H 
\\ \noalign{\vskip 3pt}
\dps
\text{ and } 
\int_T u =0 \text{ for all } T\in \mathcal{T}_H
\end{array}
\right\}
$$
of $W_H$ and define the MsFEM space \`a la Crouzeix-Raviart
\begin{equation}
\label{eq:def_VH}
V_H = \left\{
u \in W_H \text{ such that } 
a_H(u,v) = 0 \text{ for all } v \in W_H^0 
\right\}
\end{equation}
as the orthogonal complement of $W_H^0$ in $W_H$, where by orthogonality we mean orthogonality for the scalar product defined by
\begin{equation}
\label{eq:def_aH}
a_H(u,v) := \sum_{T \in {\cal T}_H} \int_{T \cap \Omega_\varepsilon}
\nabla u \cdot \nabla v.
\end{equation}
We use a broken integral in the definition of $a_H$ since $W_H \not\subset
H^1(\Omega)$.

\paragraph{Notation} 
For any $u \in V_H + H^1_0(\Omega_\eps)$, we denote by
\begin{equation}
\label{eq:def_H1H}
| u |_{H^1_H(\Omega_\eps)} := \sqrt{a_H(u,u)}
\end{equation}
the energy norm associated with the form $a_H$. 

Likewise, for any $u \in H^1_0(\Omega_\eps)$, we denote by
$$
| u |_{H^1(\Omega_\eps)} := \sqrt{\int_{\Omega_\varepsilon}
| \nabla u |^2}
$$
the $H^1$ semi-norm. 

\paragraph{Strong form and basis functions of $V_H$}

Consider any element $T \in \mathcal{T}_H$ (the three edges of which are denoted $\Gamma_i$, $1 \leq i \leq 3$). Taking in the definition of $V_H$ a function $v$ that vanishes on $\Omega \setminus T$, we note that any function $u \in V_H$ satisfies 
$$
\int_{T \cap \Omega_\varepsilon}  \nabla u \cdot \nabla v = 0
$$
for all $v \in H^1(T)$ such that $v=0$ in $B_\eps$, $\dps \int_{\Gamma_i} v = 0$ for all $i$ (if $\Gamma_i \subset \partial \Omega$, the condition 
$\dps \int_{\Gamma_i} v = 0$ is replaced by $v = 0$ on $\Gamma_i$) and 
$\dps \int_T v = 0$. This can be rewritten as
$$
\int_{T \cap \Omega_\varepsilon}  \nabla u \cdot \nabla v 
=
\lambda^T_0 \int_T v +
\sum_{i=1}^3 \lambda^T_i \int_{\Gamma_i} v 
\quad
\text{for all $v \in H^1(T)$ s.t. $v=0$ in $B_\eps$}
$$
for some scalar constants $\lambda^T_j$, $0 \leq j \leq 3$ (on purpose, we have made the dependence of these constants explicit with respect to the mesh element $T$). Hence, the restriction of any $u \in V_H$ to $T$ is in particular a solution to the boundary value problem
\begin{equation}
\label{eq:strong}
-\Delta u = \lambda^T_0 \ \text{in $T \setminus B_\eps$},
\quad 
u = 0 \ \text{in $T \cap B_\eps$},
\quad
n \cdot \nabla u = \lambda^T_i \text{ on each $\Gamma_i$}.
\end{equation}
The flux along each edge interior to $\Omega$ is therefore a constant, the constant being possibly different on the two sides of the edge.

\bigskip

The above observation shows that $V_H$ is a finite dimensional space. We now construct a basis for $V_H$, which consists of functions associated to a particular mesh element or a particular internal edge. Note that no basis function is associated to edges belonging to $\partial \Omega$.

First, for any mesh element $T$ that is not a subset of the perforations $B_\eps$ (i.e. $T \not\subset B_\eps$), we consider the variational problem
\begin{equation}
\label{eq:def_psi}
\inf \left\{ 
\begin{array}{c}
\dps 
\int_{T \setminus B_\eps} 
\left[ \frac12 \left| \nabla \Psi \right|^2 - \Psi \right], 
\ \Psi \in H^1(T), 
\\ \noalign{\vskip 3pt}
\dps 
\Psi = 0 \ \text{in $T \cap B_\eps$}, \
\int_{\Gamma_i} \Psi = 0 \text{ for each $\Gamma_i$}
\end{array}
\right\}.
\end{equation}
Using the Poincar\'e inequality recalled in~\cite[Lemma 9]{companion-article} and standard analysis arguments, we see that this problem has a unique minimizer. We then introduce the function $\Psi_T \in L^2(\Omega)$ which vanishes in $\Omega \setminus T$ and is equal to this minimizer in $T$. We easily deduce from the optimality condition that $\Psi_T \in V_H$ and satisfies
$$
-\Delta \Psi_T = 1 \ \text{in $T \setminus B_\eps$},
\quad 
\Psi_T = 0 \ \text{in $T \cap B_\eps$},
$$
with, for each edge $\Gamma_i$ of $T$, $\dps \int_{\Gamma_i} \Psi_T = 0$ and $n \cdot \nabla \Psi_T = \lambda_i$ on $\Gamma_i$ for some constant $\lambda_i$. 

\medskip

Second, for any internal edge $E$ that is not a subset of the perforations $B_\eps$, we denote $T_E^1$ and $T_E^2$ the two triangles sharing this edge, set $T_E := T_E^1 \cup T_E^2$, and consider the variational problem
\begin{equation}
\label{eq:def_phi}
\inf \left\{ 
\begin{array}{c}
\dps 
\int_{T^1_E \setminus B_\eps} \left| \nabla \Phi \right|^2
+
\int_{T^2_E \setminus B_\eps} \left| \nabla \Phi \right|^2, \ 
\Phi|_{T_E^1} \in H^1(T_E^1), \
\Phi|_{T_E^2} \in H^1(T_E^2),
\\ \noalign{\vskip 3pt}
\dps 
\Phi = 0 \ \text{in $T_E \cap B_\eps$}, \
\int_E \Phi = 1, \ \int_{E'} \Phi = 0 \ \text{for any edge $E' \subset \partial T_E$} 
\end{array}
\right\}.
\end{equation}
This set is not empty due to the fact that $E \not\subset B_\eps$. 
Again, this problem has a unique minimizer. We introduce the function $\Phi_E \in L^2(\Omega)$ which vanishes in $\Omega \setminus T_E$ and is equal to this minimizer in $T_E$. We easily deduce from the optimality condition that $\Phi_E \in V_H$ and satisfies
$$
-\Delta \Phi_E = 0 \ \text{in $T_E^1 \setminus B_\eps$},
\quad 
-\Delta \Phi_E = 0 \ \text{in $T_E^2 \setminus B_\eps$},
\quad 
\Phi_E = 0 \ \text{in $T \cap B_\eps$},
$$
with, for each edge $E' \subset \partial T_E$, $\dps \int_{E'} \Phi_E = 0$ and $n \cdot \nabla \Phi_E = \lambda_{E'}$ on $E'$ for some constant $\lambda_{E'}$ and $\dps \int_E \Phi_E = 1$ and $n \cdot \nabla \Phi_E = \lambda_E$ on $E$ for some constant $\lambda_E$ (with an a priori different constant on the two sides of $E$).

\medskip

For any mesh element $T \subset B_\eps$ (resp. any internal edge $E \subset B_\eps$), we set $\Psi_T \equiv 0$ (resp. $\Phi_E \equiv 0$).

\begin{remark}
In the one-dimensional case, the functions $\Psi_T$ and $\Phi_E$ that we have defined are equal to the basis functions of Section~\ref{ssec:1D} (see Figures~\ref{fig:phi_i} and~\ref{fig:psi_i}).
\end{remark}

The functions $\Psi_T$ and $\Phi_E$ that we have constructed belong to $V_H$. In addition, $\dps \left\{ \Psi_T \right\}_{T \in \mathcal{T}_H, \ T \not\subset B_\eps} \cup \left\{ \Phi_E \right\}_{E \in \mathcal{E}_H, \ E \not\subset B_\eps}$ forms a linearly independent family. We have
$$
\text{Span} 
\left\{ \Phi_E,  \, \Psi_T, \, E\in \mathcal{E}_H, 
\, T\in \mathcal{T}_H \right\}
\subset V_H.
$$
Conversely, let $u \in V_H$. We know that $u$ satisfies~\eqref{eq:strong}. We introduce
$$
v = u - \sum_{T \in \mathcal{T}_H} \lambda_0^T \Psi_T - \sum_{E \in \mathcal{E}_H} \left[ \int_E u \right] \Phi_E
$$ 
and note that it satisfies, for any $T \in \mathcal{T}_H$,
$$
-\Delta v = 0 \ \text{in $T \setminus B_\eps$},
\quad 
v = 0 \ \text{in $T \cap B_\eps$},
$$
with $\dps \int_E v = 0$ and $n \cdot \nabla v$ is a constant on $E$, 
for each edge $E \in \mathcal{E}_H$. This implies that $v \equiv 0$, and thus
\begin{equation}
\label{eq:vh_span}
V_H = 
\text{Span} 
\left\{ \Phi_E,  \, \Psi_T, \, E\in \mathcal{E}_H, 
\, T\in \mathcal{T}_H \right\}.
\end{equation}

\paragraph{Numerical approximation}

The MsFEM approximate solution of our problem~\eqref{eq:genP} is
defined as the solution~$u_H \in V_H$ to 
\begin{equation}
\forall v_H \in V_H, \quad
a_H(u_H,v_H) = \int_{\Omega_\eps} f v_H,
\label{2DH}
\end{equation}
where $a_H$ is defined by~\eqref{eq:def_aH}.
 
\subsection{Main result: an error estimate in the case of periodic perforations}
\label{ssec:mainresult}

The main theoretical result we obtain in this article addresses the
numerical analysis of the approach presented above, in the particular
case of periodic perforations in dimension~2, 
with a sufficient
regularity (made precise in the statement of the theorem below) of the
right-hand side~$f$ of~\eqref{eq:genP}.

\begin{theorem}
\label{theo:main}
Let $u$ be the solution to~\eqref{eq:genP} for $d=2$, with periodic
perforations and with $f \in H^2(\Omega)$.
We assume that, loosely speaking,  the slopes of the mesh edges are
rational numbers. More precisely, we assume that the equation of any
internal edge $E$ of the mesh writes $\dps x_2 = \frac{p_E}{q_E} x_1 + c_E$ for
some $c_E \in \RR$, some $p_E \in \ZZ$ and $q_E \in \NN^\star$ that are
coprime, with 
\begin{equation}
\label{hyp:mesh}
|q_E| \leq C
\end{equation}
for a constant~$C$ independent of the edge considered in the mesh and of
the mesh size $H$. 

Then the MsFEM approximation $u_H$, solution to~\eqref{2DH}, satisfies 
\begin{equation}
|u-u_H|_{H^1_H(\Omega_\eps)} \leq C \varepsilon \, 
\left( \sqrt{\varepsilon} + H + \sqrt{\frac{\eps}{H}} \right) \,
\| f \|_{H^2(\Omega)},
\label{eq:mainresult}
\end{equation}
for some universal constant~$C$ independent from $H$, $\varepsilon$ and $f$,
but depending on the geometry of the mesh and other parameters of the
problem. 
\end{theorem}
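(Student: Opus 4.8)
The plan is to combine the standard nonconforming (second Strang) error analysis with the periodic homogenization structure of $u$, following the template of our companion work~\cite{companion-article} but now accounting for the perforations and for the bubble enrichment. Since $V_H \not\subset H^1_0(\Omega_\eps)$, I would first establish a Strang-type bound
$$
|u - u_H|_{H^1_H(\Omega_\eps)} \leq C \inf_{v_H \in V_H} |u - v_H|_{H^1_H(\Omega_\eps)} + C \sup_{w_H \in V_H \setminus \{0\}} \frac{\left| a_H(u,w_H) - \int_{\Omega_\eps} f \, w_H \right|}{|w_H|_{H^1_H(\Omega_\eps)}},
$$
splitting the error into a best-approximation part and a consistency part. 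Integrating by parts elementwise in the second term and using $-\Delta u = f$ together with $w_H = 0$ on the perforations reduces the consistency error to a sum of interior-edge flux contributions $\sum_{E} \int_{E \cap \Omega_\eps} (n \cdot \nabla u) \, [[w_H]]$, the boundary edges dropping out because $w_H$ vanishes on $\partial B_\eps$ and on $\partial \Omega$.

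For the approximation term I would not interpolate $u$ directly but rather its two-scale surrogate. Recalling the homogenization result to be stated in Section~\ref{ssec:homogenization}, the solution is of the form $u(x) \approx \eps^2 f(x) \, w(x/\eps)$ away from $\partial \Omega$, where $w$ is the $Y$-periodic solution of the cell problem $-\Delta_y w = 1$ in the perforated cell with $w = 0$ on the perforation; this is what produces the overall prefactor $\eps$ in the energy norm. The natural candidate is then built from the MsFEM basis~\eqref{eq:vh_span} as
$$
v_H = \sum_{T \in \mathcal{T}_H} (\Pi_H f)|_T \, \Psi_T + \sum_{E \in \mathcal{E}_H} \left[ \int_E u \right] \Phi_E,
$$
with $\Pi_H f$ the piecewise-constant $L^2$-projection of $f$. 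The point is that, by its very definition~\eqref{eq:def_psi}, $\Psi_T$ solves the same cell-type problem $-\Delta \Psi_T = 1$ in $T \setminus B_\eps$ as the corrector, so $(\Pi_H f)|_T \, \Psi_T$ reproduces the fine-scale response $\eps^2 f \, w(\cdot/\eps)$ up to the discrepancy in the source ($f$ versus $\Pi_H f$) and up to the mismatch of boundary values on $\partial T$, while the edge functions $\Phi_E$ carry the macroscopic matching through the averages $\int_E u$, exactly as nodal values were matched in the one-dimensional Theorem.

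I would then estimate $|u - v_H|_{H^1_H(\Omega_\eps)}$ by inserting the two-scale expansion and bounding three mismatches separately, each producing one term of~\eqref{eq:mainresult}. Replacing $f$ by $\Pi_H f$ in the interior yields, exactly as in one dimension, the first-order term $\eps H \|f\|_{H^2(\Omega)}$ after the scaled Poincar\'e inequality~\eqref{eq:poincare_perfore} on the perforated cells. The boundary-layer correction restoring the Dirichlet condition on $\partial \Omega$ is localized in a strip of width $O(\eps)$, and an energy estimate on that strip gives the term $\eps \sqrt{\eps} \, \|f\|_{H^2(\Omega)}$. Finally, the difference between the genuinely periodic corrector $\eps^2 w(\cdot/\eps)$ and its element-confined counterpart $\Psi_T$ is a harmonic boundary layer along $\partial T$ of width $O(\eps)$, caused by the artificial conditions~\eqref{eq:def_psi} imposed on the element edges; summing its energy over the $O(H^{-2})$ elements produces the resonance term $\eps \sqrt{\eps/H} \, \|f\|_{H^2(\Omega)}$, the familiar signature of MsFEM.

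The main obstacle, and the place where hypothesis~\eqref{hyp:mesh} is indispensable, is the control of the consistency error, which requires quantitative cancellation for the oscillatory flux $n \cdot \nabla u \approx \eps f \, (n \cdot \nabla_y w)(\cdot/\eps)$ integrated along a mesh edge. Using $\int_E [[w_H]] = 0$ I would subtract the edge-average of the flux and then integrate by parts \emph{along} the edge. Since $n \cdot \nabla_y w$ has zero average over the perforated cell, its restriction to a line of rational slope $p_E/q_E$ is a periodic mean-zero profile whose primitive stays uniformly bounded precisely because the sampling line closes up on the torus after a length that the bound $|q_E| \leq C$ keeps under control; this is the extra smallness, uniform in $H$ and $\eps$, needed to absorb the consistency error into the right-hand side of~\eqref{eq:mainresult}. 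For an edge of irrational slope the same primitive would only be controlled at a non-uniform, Diophantine-dependent rate, and the clean scaling of~\eqref{eq:mainresult} would be lost. Combined with the trace estimate of Lemma~\ref{lem:trace} for $\|[[w_H]]\|_{L^2(E)}$ and a summation over edges, this completes the bound and yields~\eqref{eq:mainresult}.
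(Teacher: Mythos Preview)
Your Strang-lemma framework, with the interpolant
\[
v_H=\sum_T (\Pi_H f)|_T\,\Psi_T+\sum_E\Big[\int_E u\Big]\Phi_E,
\]
is a valid route and is in fact the approach of the companion article~\cite{companion-article}; the paper explicitly acknowledges this but deliberately follows a \emph{different} path, namely the direct decomposition $u-u_H=(u-v_H)+(v_H-u_H)$, estimating each piece by inserting the two-scale surrogate $\eps^2 w_\eps f$ and integrating by parts on every element. So at the level of overall architecture you are not wrong, just on the alternative track.

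Where your outline has a genuine gap is the mechanism you propose for the resonance term $\eps\sqrt{\eps/H}$ in the \emph{approximation} error. You attribute it to ``a harmonic boundary layer along $\partial T$'' between $\eps^2 w_\eps$ and $\Psi_T$, and you then assert that the rational-slope hypothesis~\eqref{hyp:mesh} is needed \emph{only} in the consistency error. Neither claim survives scrutiny. The Crouzeix--Raviart conditions on $\Psi_T$ are not Dirichlet but \emph{zero edge average plus constant normal flux}; the difference $\eps^2 w_\eps-\Psi_T$ is harmonic in the perforated element and vanishes on the perforations, but there is no clean maximum-principle or layer estimate that turns this into an $O(\eps)$-wide strip with the scaling you quote. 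In the paper (Step~1c) and in~\cite{companion-article}, the $\sqrt{\eps/H}$ contribution in the approximation step comes from exactly the same oscillatory edge integral
\[
\eps^2\sum_{E\in\mathcal E_H}\int_{E\cap\Omega_\eps}[[\,u-v_H\,]]\;n\cdot\nabla(w_\eps f),
\]
obtained after an elementwise integration by parts, and it is bounded by subtracting the edge average of $n\cdot(\nabla w)_\eps$, applying the periodic mean-zero Lemma~\ref{lem:malin}, and interpolating between $L^2$ and $H^1$ along the edge. This is precisely where~\eqref{hyp:mesh} enters, \emph{also} in the approximation step, not only in the consistency one. If you want to keep the Strang framework, replace your boundary-layer heuristic by this edge-flux argument for $|u-v_H|_{H^1_H}$; otherwise the $\sqrt{\eps/H}$ bound is unjustified.
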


As will be evident from the theoretical ingredients recalled below
(see~\eqref{eq:lions} and comments following this 
estimate), the right-hand side of~\eqref{eq:mainresult} needs to be
understood as follows. The size of the exact solution~$u$ (and thus that
of the corresponding approximation~$u_H$) is~$\varepsilon$ in $H^1$
norm. Taking this scale factor into account, the actual rate of convergence for the numerical
approach we design is therefore given by
$\dps \sqrt{\varepsilon}+ H + \sqrt{\eps/H}$.

\begin{remark} Our
  assumption on the rationality of the slopes in the mesh is necessary,
  in the current state of our understanding, to treat traces of periodic
  functions on the edges of the mesh. In full generality, such traces
  are almost periodic functions. Our proof perhaps carries over to this
  case, however at the price of unnecessary technicalities (we refer e.g. to~\cite{gv1,gv2} for works on boundary layers in homogenization, where such non-periodic situations are dealt with). In the case
  of rational slopes we restrict ourselves to, these traces are
  periodic, and the uniform bound~\eqref{hyp:mesh} we additionally assume
  enables us to uniformly bound
  their periods from above, rending the proof much easier. We
  emphasize that our assumption does not seem to us very
  restrictive in practice.  
\end{remark}

\begin{remark}
It is useful to compare our error estimate~\eqref{eq:mainresult} with estimates for other existing MsFEM-type approaches established for similar problems. First, we are not aware of any other numerical analysis of a MsFEM-type approach for problems set on perforated domains. To the best of our knowledge, this work is the first one proposing and analyzing a MsFEM-type approach specifically adapted to such problems.

Second, as pointed out above, this work is a follow up on our previous work~\cite{companion-article} where we have studied a Crouzeix-Raviart type MsFEM approach on the problem
\begin{equation}
\label{eq:pb_ref}
-\dive \left[ A_{\eps}(x) \nabla u^\eps \right] = f 
\text{ in $\Omega$},
\quad
u^\eps = 0 \text{ on $\partial \Omega$},
\end{equation}
the main difference between that method and the one presented here being the addition of bubble functions in the MsFEM space. For problem~\eqref{eq:pb_ref}, we have compared in~\cite[Remark 3.2]{companion-article} our error estimate with those obtained for other MsFEM-type approaches.  
\end{remark}

\begin{remark}
In the absence of perforations, our problem simply writes
$$
-\Delta u = f \text{ in $\Omega$},
\quad
u = 0\text{ on $\partial \Omega$}.
$$
Assuming a triangular mesh is used, our discretization space $V_H$ then becomes the standard Crouzeix-Raviart space~\cite{Crouzeix-Raviart} (see~\cite[Remark 1.1]{companion-article}), complemented by bubble functions defined by~\eqref{eq:def_psi} with $B_\eps = \emptyset$. In turn, the MsFEM approach with linear boundary conditions (as well as the oversampling variant) then becomes the standard P1 FEM.
\end{remark}

\medskip

The next two sections are devoted to the proof of Theorem~\ref{theo:main}. Numerical results are gathered in Section~\ref{sec:Numerical-tests}. 

\section{Some preliminaries}
\label{sec:preliminaries}

\subsection{Elements of homogenization theory for periodically perforated domains}
\label{ssec:homogenization}

We consider the unit square $Y$ and some smooth perforation $B \subset Y$. We
next scale $B$ and $Y$ by a factor $\varepsilon$
and then periodically repeat this pattern with periods $\varepsilon$ in both
directions. The set of perforations is therefore 
$$
B_\eps = \Omega \cap \left( \underset{k \in \ZZ^2}{\cup} \ \eps B_k \right)
\quad \text{with} \quad 
B_k = k + B
$$ 
and the perforated domain is 
$\Omega_\eps = \Omega \setminus \overline{B_\eps}$. 
We denote by $u^\varepsilon$  the solution
to~\eqref{eq:genP} to emphasize the dependency upon~$\varepsilon$. We
know from the classical work~\cite{lions1980}
that, provided $f$ vanishes on the boundary of $\Omega$ (see below the easy adaptation to a more general case), we have
\begin{equation}
\left| u^\varepsilon - \varepsilon^2 
w \left( \frac{\cdot}{\varepsilon} \right) f \right|_{H^1(\Omega_\eps)} 
\leq C \varepsilon^2 \|f\|_{H^2(\Omega)},
\label{eq:lions}
\end{equation}
where $w$ denotes the corrector, that is the solution to the problem 
\begin{eqnarray}
-\Delta w & = &1 \text{ on $Y \setminus B$}, 
\nonumber
\\
w & = & 0\text{ on $\overline{B}$}, 
\label{eq:corrector-lions}
\\
&&\text{$w$ is $Y$-periodic},
\nonumber
\end{eqnarray}
in the unit cell~$Y$. We refer
to~\cite{blp,Engquist-Souganidis,Jikov1994} for more background on
homogenization theory. Note that~\eqref{eq:lions} is {\em not} restricted to the two-dimensional case. In the sequel, we will use the fact that
\begin{equation}
\label{eq:regul_w}
w \in C^1\left(\overline{Y \setminus B}\right),
\end{equation}
which follows from the fact that $w \in C^{2,\delta}\left(\overline{Y \setminus B}\right)$ for some $\delta>0$ (see e.g.~\cite[Theorem 6.14]{gilbarg-trudinger}). In view of~\cite[Corollary 8.11]{gilbarg-trudinger}, we also have $w \in C^\infty(Y \setminus B)$, but we will not need this henceforth.

\medskip

Clearly, \eqref{eq:lions} shows that, for $\varepsilon$ small, the
dominant behaviour of the solution~$u^\varepsilon$ to~\eqref{eq:genP} is
simple. It is obtained by a simple multiplication of the
right-hand side~$f$ by the corrector function. Otherwise stated, the
particular setting yields an homogenized problem where the differential
operator has disappeared. The corrector
problem~\eqref{eq:corrector-lions} formally agrees with intuition: at
the scale of the geometric heterogeneities, the right-hand side~$f$
of~\eqref{eq:genP} is seen as a constant function (thus the right-hand side
of~\eqref{eq:corrector-lions}) and the approximation of the
solution~$u^\varepsilon$ is obtained by the 
simple multiplication mentioned above. Additionally, the ``size'' of the
solution~$u^\varepsilon$ is proportional to~$\varepsilon^2$ in
$L^2$ norm and~$\varepsilon$ in $H^1$ norm, a fact that will need to be
borne in mind below when performing the analysis and the numerical
experiments.

\medskip
 
It is easy to modify~\eqref{eq:lions} in order to accomodate the more
general situation where the right-hand side~$f \in H^2(\Omega)$ does not necessarily
vanish on the boundary of $\Omega$, provided the domain $\Omega$ is smooth. We then have the weaker estimate
\begin{equation}
\left| u^\varepsilon - 
\varepsilon^2 w \left( \frac{\cdot}{\varepsilon} \right) 
f \right|_{H^1(\Omega_\eps)} \leq
C \varepsilon^{3/2} {\cal N}(f),
\label{eq:lions-general}
\end{equation}
where
\begin{equation}
\label{eq:def_N_f}
{\cal N}(f) = \| f \|_{L^\infty(\Omega)} + 
\| \nabla f \|_{L^2(\Omega)} + 
\| \Delta f \|_{L^2(\Omega)}.
\end{equation}
We recall that, in dimension $d=2$, the injection $H^2(\Omega) \subset
C^0(\overline{\Omega})$ is continuous.
The proof of~\eqref{eq:lions-general} is postponed until Appendix~\ref{sec:proof_hom}. 

\medskip

A key ingredient for that proof, and for other proofs throughout this article, is the following Poincar\'e inequality in the perforated domain
$\Omega_\varepsilon$: there exists a constant $C$
independent of $\varepsilon$ such that
\begin{equation}
\label{eq:poincare_perfore}
\forall \phi \in H_0^1(\Omega_\varepsilon), \quad
\| \phi \|_{L^2(\Omega_\varepsilon)}
\leq 
C \varepsilon \| \nabla \phi \|_{L^2(\Omega_\varepsilon)}
=
C \varepsilon |\phi|_{H^1(\Omega_\eps)}.
\end{equation}
The proof of~\eqref{eq:poincare_perfore} is postponed until Appendix~\ref{sec:proof_poincare}. Following the same arguments, we also see that there exists a constant $C$ independent of $\varepsilon$ such that
\begin{equation}
\label{eq:poincare_perfore2}
\forall \phi \in W_H, \quad
\| \phi \|_{L^2(\Omega_\varepsilon)}
\leq 
C \varepsilon |\phi|_{H^1_H(\Omega_\eps)},
\end{equation}
where, we recall, the notation $| \cdot |_{H^1_H(\Omega_\eps)}$ has been defined in~\eqref{eq:def_H1H}.
The condition $\dps \int_E [[\phi]] =0$ (present in the definition of $W_H$) is actually not needed for~\eqref{eq:poincare_perfore2} to hold, given that $\phi =0$ on $B_\eps$.

\subsection{Classical ingredients of multiscale numerical analysis}
\label{ssec:ingredients}

Before we get to the proof of
Theorem~\ref{theo:main}, we first need to collect here some standard
Trace theorems (which were already used and proved in~\cite{companion-article}) and results on the convergence of oscillating
functions. We refer to the textbooks~\cite{brenner,ern,gilbarg-trudinger}
for more details. 
Remark that only Lemma~\ref{lem:malin} is restricted to the
two-dimensional setting.

\medskip
 
First we recall the definition, borrowed from e.g.~\cite[Definition
B.30]{ern}, of the $H^{1/2}$ space. 

\begin{definition}
For any open domain $\omega \subset \RR^n$,
we define the norm
$$
\| u \|^2_{H^{1/2}(\omega)} := 
\| u \|^2_{L^2(\omega)} + | u |^2_{H^{1/2}(\omega)},
$$
where
$$
| u |^2_{H^{1/2}(\omega)} := \int_\omega \int_\omega 
\frac{|u(x)-u(y)|^2}{|x-y|^{n+1}} \, dx dy,
$$
and define the space
$$
H^{1/2}(\omega) := \left\{ u \in L^2(\omega), \quad 
\| u \|_{H^{1/2}(\omega)} < \infty \right\}.
$$
\end{definition}

\paragraph{Trace inequalities}

We have the following trace results:
\begin{lemma}
\label{lem:trace}
There exists $C$ (depending only on the regularity of the mesh)
such that, for any $T \in {\cal T}_H$ and any edge $E
\subset \partial T$, we have 
\begin{equation}
\label{eq:trace1}
\forall v \in H^1(T), \quad
\| v \|^2_{L^2(E)} \leq C \left( 
H^{-1} \| v \|^2_{L^2(T)} + H \| \nabla v \|^2_{L^2(T)}
\right).
\end{equation}
Under the additional assumption that $\dps \int_E v = 0$, we have
\begin{equation}
\label{eq:trace2_pre}
\| v \|^2_{L^2(E)} \leq C H \| \nabla v \|^2_{L^2(T)}
\end{equation}
and
\begin{equation}
\label{eq:trace3_pre}
\| v \|^2_{H^{1/2}(E)} \leq C (1+H) \| \nabla v \|^2_{L^2(T)}.
\end{equation}
\end{lemma}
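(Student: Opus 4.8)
The plan is to prove all three estimates by the classical scaling argument: first establish the $H$-independent analogues on the fixed reference element $\overline{T}$, then transport them to the physical element $T$ through the mapping $K$, carefully tracking the powers of $H$ generated by the Jacobian factors. Throughout, for $v \in H^1(T)$ I write $\widehat{v} := v \circ K \in H^1(\overline{T})$, and I denote by $\overline{E} := K^{-1}(E)$ the reference edge. Since $d=2$, the bounds $\| \nabla K \|_{L^\infty} \leq CH$ and $\| \nabla K^{-1} \|_{L^\infty} \leq CH^{-1}$ give $|\det \nabla K| \sim H^2$, whence $\| v \|_{L^2(T)}^2 \sim H^2 \| \widehat{v} \|_{L^2(\overline{T})}^2$ and, because the gradient transforms by $(\nabla K)^{-T}$, the factors $H^{-2}$ and $H^2$ cancel to give $\| \nabla v \|_{L^2(T)}^2 \sim \| \nabla \widehat{v} \|_{L^2(\overline{T})}^2$. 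On the edge, the assumption that $K$ is affine on $\partial\overline{T}$ makes the edge Jacobian a constant $J_E \sim H$, so $\| v \|_{L^2(E)}^2 \sim H \| \widehat{v} \|_{L^2(\overline{E})}^2$, and, crucially, the zero-mean condition $\int_E v = 0$ is \emph{equivalent} to $\int_{\overline{E}} \widehat{v} = 0$.

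On the reference element, the classical trace theorem gives $\| \widehat{v} \|_{L^2(\overline{E})}^2 \leq C( \| \widehat{v} \|_{L^2(\overline{T})}^2 + \| \nabla \widehat{v} \|_{L^2(\overline{T})}^2 )$; inserting the scaling equivalences above and collecting powers of $H$ yields~\eqref{eq:trace1} directly. To obtain the sharper bound under $\int_{\overline{E}} \widehat{v} = 0$, I would split $\widehat{v} = \overline{v} + (\widehat{v} - \overline{v})$, where $\overline{v}$ is the average of $\widehat{v}$ over $\overline{T}$. The oscillating part is controlled by the trace theorem together with the Poincaré--Wirtinger inequality $\| \widehat{v} - \overline{v} \|_{L^2(\overline{T})} \leq C \| \nabla \widehat{v} \|_{L^2(\overline{T})}$, while the constant $\overline{v}$ is controlled because $\int_{\overline{E}} \widehat{v} = 0$ forces $|\overline{v}| \, |\overline{E}| = \left| \int_{\overline{E}} (\widehat{v} - \overline{v}) \right| \leq |\overline{E}|^{1/2} \| \widehat{v} - \overline{v} \|_{L^2(\overline{E})} \leq C \| \nabla \widehat{v} \|_{L^2(\overline{T})}$. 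This gives $\| \widehat{v} \|_{L^2(\overline{E})}^2 \leq C \| \nabla \widehat{v} \|_{L^2(\overline{T})}^2$, and scaling back produces~\eqref{eq:trace2_pre}.

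The genuinely delicate point is the $H^{1/2}$ estimate~\eqref{eq:trace3_pre}, since a fractional norm does not scale as transparently as $L^2$ or $H^1$. The key observation is that, for a one-dimensional edge (so $n=1$ and the exponent $n+1=2$ in the definition of $|\cdot|_{H^{1/2}}$) with an affine edge map satisfying $|x-y| = J_E |\widehat{x} - \widehat{y}|$ and $ds = J_E \, d\widehat{s}$, the factor $J_E^2$ arising from the two arc-length measures exactly cancels the $J_E^2$ in the denominator. Hence the seminorm is \emph{scale-invariant}, $|v|_{H^{1/2}(E)} = |\widehat{v}|_{H^{1/2}(\overline{E})}$; this identity is precisely where the affineness of $K$ on each edge is used. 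Moreover, because the seminorm only sees differences $\widehat{v}(x) - \widehat{v}(y)$, it is unchanged by subtracting the constant $\overline{v}$, so the $H^{1/2}$ trace theorem gives $|\widehat{v}|_{H^{1/2}(\overline{E})} = |\widehat{v} - \overline{v}|_{H^{1/2}(\overline{E})} \leq C \| \nabla \widehat{v} \|_{L^2(\overline{T})} \sim C \| \nabla v \|_{L^2(T)}$. Combining this with the $L^2(E)$ bound~\eqref{eq:trace2_pre} for the $L^2$ part of the full $H^{1/2}(E)$ norm yields $\| v \|_{H^{1/2}(E)}^2 \leq C(1+H) \| \nabla v \|_{L^2(T)}^2$, as claimed. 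I expect the scale-invariance computation for $|\cdot|_{H^{1/2}}$ and its compatibility with the affine-edge hypothesis to be the main obstacle, the remaining steps being routine bookkeeping of Jacobian powers.
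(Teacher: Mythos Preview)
Your proposal is correct and is precisely the classical scaling argument the paper has in mind: the paper does not spell out a proof here but simply cites \cite[p.~282]{brenner} and \cite[Section~4.2]{companion-article}, both of which proceed by mapping to the reference element, applying the trace theorem (together with Poincar\'e--Wirtinger for the zero-mean case), and tracking Jacobian powers on the way back. Your identification of the scale-invariance of the $H^{1/2}$ seminorm under the affine edge map as the key computation for~\eqref{eq:trace3_pre} is exactly right and is the reason the paper insists that $K$ be affine on each edge of $\partial\overline{T}$.
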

These bounds
are classical results (see e.g.~\cite[page
282]{brenner}) and are proved
in~\cite[Section 4.2]{companion-article}. The following result is a direct
consequence of~\eqref{eq:trace2_pre} and~\eqref{eq:trace3_pre}: 
\begin{corollary}
\label{coro:trace}
Consider an edge $E \in {\cal E}_H$, and let $T_E \subset {\cal T}_H$
denote all the triangles sharing this edge. 
There exists $C$ (depending only on the regularity of the mesh) such
that
\begin{equation}
\label{eq:trace2}
\forall v \in W_H, 
\quad
\| \, [[v]] \, \|^2_{L^2(E)} \leq C H \sum_{T \in T_E} 
\| \nabla v \|^2_{L^2(T)}
\end{equation}
and
\begin{equation}
\label{eq:trace3}
\forall v \in W_H, 
\quad
\| \, [[v]] \, \|^2_{H^{1/2}(E)} \leq C (1+H) \sum_{T \in T_E} 
\| \nabla v \|^2_{L^2(T)}.
\end{equation}
\end{corollary}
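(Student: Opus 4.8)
The plan is to deduce both inequalities directly from the single-triangle trace bounds \eqref{eq:trace2_pre} and \eqref{eq:trace3_pre}. The catch is that those bounds require the zero-mean condition $\int_E v = 0$, whereas for $v \in W_H$ only the \emph{jump} is known to have vanishing mean, namely $\int_E [[v]] = 0$; the individual one-sided traces need not be centered. The idea is therefore to center each trace by subtracting its own average over $E$, apply the lemma to each centered trace, and then recombine using the fact that the two edge-averages must coincide.

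Concretely, let $T_E = \{T_E^1, T_E^2\}$ denote the two triangles sharing $E$ (there are exactly two, since the mesh has no hanging nodes), and let $v_i := \left( v|_{T_E^i} \right)\big|_E$ be the trace of $v$ coming from $T_E^i$, with edge-average $\bar v_i := |E|^{-1} \int_E v_i$. First I would apply the lemma to the function $v|_{T_E^i} - \bar v_i \in H^1(T_E^i)$: its trace $v_i - \bar v_i$ has zero mean on $E$ and its gradient coincides with $\nabla v$ on $T_E^i$, so \eqref{eq:trace2_pre} yields
$$
\| v_i - \bar v_i \|^2_{L^2(E)} \leq C H \| \nabla v \|^2_{L^2(T_E^i)},
$$
and \eqref{eq:trace3_pre} yields $\| v_i - \bar v_i \|^2_{H^{1/2}(E)} \leq C(1+H) \| \nabla v \|^2_{L^2(T_E^i)}$, for $i = 1,2$.

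The second step is to use the defining constraint of $W_H$: since $\int_E [[v]] = \int_E (v_1 - v_2) = 0$, the two averages agree, $\bar v_1 = \bar v_2 =: \bar v$, and hence $[[v]] = v_1 - v_2 = (v_1 - \bar v) - (v_2 - \bar v)$ is the difference of two zero-mean traces. I would then invoke the triangle inequality in the relevant norm ($L^2(E)$ for \eqref{eq:trace2}, $H^{1/2}(E)$ for \eqref{eq:trace3}) together with $(a+b)^2 \leq 2(a^2 + b^2)$ to get
$$
\| [[v]] \|^2_{L^2(E)} \leq 2 \left( \| v_1 - \bar v \|^2_{L^2(E)} + \| v_2 - \bar v \|^2_{L^2(E)} \right),
$$
and the analogous bound in $H^{1/2}(E)$. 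Substituting the per-triangle estimates from the first step then gives \eqref{eq:trace2} and \eqref{eq:trace3}, at the price of doubling the constant.

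I do not anticipate a genuine obstacle: the proof is essentially bookkeeping once the right centering is chosen. The only point requiring attention is that the lemma cannot be applied to the raw traces $v_1, v_2$, and that it is precisely the $W_H$ constraint $\int_E [[v]] = 0$ which makes the two edge-averages equal, so that subtracting the common constant $\bar v$ leaves the jump $[[v]]$ unchanged.
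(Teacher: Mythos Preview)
Your proposal is correct and is precisely the natural way to flesh out what the paper leaves implicit: the paper merely states that the corollary is ``a direct consequence of~\eqref{eq:trace2_pre} and~\eqref{eq:trace3_pre}'' without spelling out the centering argument. Your observation that the $W_H$ constraint forces $\bar v_1=\bar v_2$, so that subtracting this common constant from each one-sided trace produces zero-mean functions to which Lemma~\ref{lem:trace} applies while leaving the jump unchanged, is exactly the missing link.
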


\paragraph{Averages of oscillatory functions}

We shall also need the following classical result.

\begin{lemma}
\label{lem:malin}
Let $g \in L^\infty(\RR)$ be a $q$-periodic function with zero mean. Let
$f \in W^{1,1}(0,H) \subset C^0(0,H)$ be a 
function defined on the interval $[0,H]$ that vanishes at least at one
point of $[0,H]$. Then, for any $\varepsilon >0$,
$$
\left| \int_0^H g\left(\frac{x}{\varepsilon}\right)
f(x)dx \right| 
\leq 
2\varepsilon q \| g \|_{L^\infty(\RR)} \| f' \|_{L^1(0,H)}.
$$
\end{lemma}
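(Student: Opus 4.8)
The plan is to transfer the oscillation onto a bounded primitive of $g$ by an integration by parts, and then to exploit the hypothesis that $f$ vanishes somewhere in order to control the single surviving boundary term. First I would introduce the primitive $G(y) = \int_0^y g(s)\,ds$. The key elementary observation is that, because $g$ is $q$-periodic with zero mean, $G$ is itself $q$-periodic and, more importantly, uniformly bounded: peeling off an integer number of full periods from the interval $[0,y]$ (each full period contributing $\int_0^q g = 0$) leaves a remainder of length strictly less than $q$, whence $|G(y)| \le q\,\|g\|_{L^\infty(\RR)}$ for every $y \ge 0$. Note that $x \mapsto \varepsilon\,G(x/\varepsilon)$ is a primitive of $x \mapsto g(x/\varepsilon)$.

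Since $f \in W^{1,1}(0,H)$ is absolutely continuous, I may integrate by parts:
\[
\int_0^H g\!\left(\frac{x}{\varepsilon}\right) f(x)\,dx
= \varepsilon\,G\!\left(\frac{H}{\varepsilon}\right) f(H)
- \varepsilon\,G(0)\,f(0)
- \varepsilon \int_0^H G\!\left(\frac{x}{\varepsilon}\right) f'(x)\,dx .
\]
Because $G(0)=0$, only the boundary term at $x=H$ and the bulk integral remain. The bulk integral is immediately controlled by $\varepsilon\,q\,\|g\|_{L^\infty(\RR)}\,\|f'\|_{L^1(0,H)}$ using the uniform bound on $G$.

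It remains to bound the boundary contribution $\varepsilon\,G(H/\varepsilon)\,f(H)$, and this is precisely where the hypothesis is used. Let $x_0 \in [0,H]$ be a point at which $f(x_0)=0$; then $f(H) = \int_{x_0}^H f'(t)\,dt$, so $|f(H)| \le \|f'\|_{L^1(0,H)}$. Combined with $|G(H/\varepsilon)| \le q\,\|g\|_{L^\infty(\RR)}$, the boundary term is bounded by $\varepsilon\,q\,\|g\|_{L^\infty(\RR)}\,\|f'\|_{L^1(0,H)}$. Adding the two contributions yields the stated estimate, the factor $2$ simply accounting for the sum of the boundary term and the bulk term.

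I do not expect any genuine obstacle here; the only point requiring a little care is the uniform bound on the primitive $G$, which rests squarely on the zero-mean and periodicity assumptions — without zero mean, $G$ would grow linearly and the whole argument would collapse. An alternative, boundary-term-free route would be to write $f(x) = \int_{x_0}^x f'$, substitute this into the integral, and apply Fubini, thereby reducing everything to the elementary estimate $\left|\int_a^b g(x/\varepsilon)\,dx\right| \le \varepsilon\,q\,\|g\|_{L^\infty(\RR)}$ for an arbitrary subinterval $[a,b] \subset [0,H]$; this variant even produces the sharper constant $\varepsilon q$ in place of $2\varepsilon q$.
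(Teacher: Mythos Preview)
Your proof is correct and is essentially the same argument as the paper's: introduce the bounded $q$-periodic primitive $G$, integrate by parts, and use the zero of $f$ to control the surviving boundary contribution via the fundamental theorem of calculus. The only cosmetic difference is that the paper splits the integral at the zero $c$ of $f$ (so the boundary term at $c$ drops directly), whereas you integrate by parts on the whole of $[0,H]$ and exploit $G(0)=0$; both routes lead to the same bound with the same constant.
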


\begin{proof} 
The proof is simple and essentially based upon an integration by parts.  
Let $G$ be a primitive of $g$: 
$$
G(x) = \int_0^x g(t)dt. 
$$
The function $G$ is $q$-periodic (as the average of $g$ over its period
vanishes) and bounded, with
$\| G \|_{L^\infty(\RR)} \leq q \| g \|_{L^\infty(\RR)}$. Supposing that the
function $f$ vanishes at the point $c\in [0,H]$, we write
\begin{eqnarray*}
\int_c^H g\left(\frac{x}{\varepsilon}\right) f(x) \, dx
&=&
\int_c^H G'\left(\frac{x}{\varepsilon}\right) f(x) \, dx
\\
&=&
\eps G\left(\frac{H}{\varepsilon}\right) f(H)
-
\int_c^H \eps G\left(\frac{x}{\varepsilon}\right) f'(x) \, dx
\\
&=&
\eps G\left(\frac{H}{\varepsilon}\right) \int_c^H f'(x) \, dx
-
\int_c^H \eps G\left(\frac{x}{\varepsilon}\right) f'(x) \, dx,
\end{eqnarray*}
hence
$$
\left|
\int_c^H g\left(\frac{x}{\varepsilon}\right) f(x) \, dx
\right|
\leq
2 \eps \| G \|_{L^\infty(\RR)} \| f' \|_{L^1(c,H)}
\leq
2 \eps q \| g \|_{L^\infty(\RR)} \| f' \|_{L^1(c,H)}.
$$
By a similar computation,
$$
\left| 
\int_0^c g\left(\frac{x}{\varepsilon}\right) f(x) \, dx
\right| 
\leq 
2 \varepsilon q \|g \|_{L^\infty(\RR)} \| f' \|_{L^1(0,c)}.
$$
The above two bounds imply the result.
\end{proof}

\section{Proof of our main result}
\label{sec:proof}

To prove Theorem~\ref{theo:main}, it is possible to
follow the same arguments as in our earlier
work~\cite{companion-article}. 
We follow here a different
path, so as to show that other strategies are possible. Note that we use here and in~\cite{companion-article} the same technical ingredients, including those recalled in Section~\ref{ssec:ingredients} and an interpolation argument, see Step 1c below.

\medskip

Let $u$ be the solution to the reference problem~\eqref{eq:genP} with the right-hand side~$f$, and
let $\Pi_H f$ be the $L^2$-orthogonal projection of $f$ on the space of
piecewise constant functions. We recall the following standard finite element 
interpolation result: there exists $C$ independent of $H$ and $f$ such that
\begin{equation}
\label{eq:P1_EF}
\| f - \Pi_H f \|_{L^2(\Omega)}
\leq
C H \| \nabla f \|_{L^2(\Omega)}.
\end{equation}
We introduce
\begin{equation}
\label{eq:def_fct_vh}
v_H(x) = \sum_{T \in {\cal T}_H} \Pi_H f \ \Psi_T(x) + \sum_{E \in {\cal E}_H} \left[ \int_E u \right] \ \Phi_E(x),
\end{equation}
where the functions $\Psi_T$ and $\Phi_E$ have been defined in Section~\ref{ssec:msfem} by~\eqref{eq:def_psi} and~\eqref{eq:def_phi} respectively. We recall that, if $T \subset B_\eps$ (resp. $E \subset B_\eps$), then $\Psi_T \equiv 0$ (resp. $\Phi_E \equiv 0$). We see from~\eqref{eq:vh_span} that $v_H \in V_H$. We next decompose the exact solution $u$ of~\eqref{eq:genP} in the form 
$$
u=v_H + \phi.
$$
By definition of $\Psi_T$ and $\Phi_E$, we have, for all edges $E \in {\cal E}_H$ and all triangles $T \in {\cal T}_H$, that
\begin{equation}
\label{eq:par_construction}
\begin{array}{rcl}
\dps \int_E v_H &=& \dps \int_E u \quad \text{hence} \quad \int_E \phi = 0,
\\ \noalign {\vskip 3pt}
\dps n \cdot \nabla v_H &=& \text{Constant on (each side of) $E$},
\\ \noalign {\vskip 3pt}
-\Delta v_H &=& \Pi_H f \text{ on $T \cap \Omega_\varepsilon$}.
\end{array}
\end{equation}
The estimate~\eqref{eq:mainresult} is proved by
estimating $\phi = u-v_H$ in Step 1 below and next $v_H-u_H$ in Step 2. 

In what follows, we use the shorthand notation 
$\dps g_\varepsilon(x) = g \left( x/\eps \right)$ for all functions $g$. The notation $C$ stands for a constant that is independent from $\eps$, $H$, $f$ and $u$, and that may vary from one line to the next. 

\paragraph{Step 1: Estimation of $u-v_H$:}

Using the approximation of $u$ given by the homogenization result~\eqref{eq:lions-general},
we write
\begin{eqnarray}
| \phi |_{H^1_H(\Omega_\eps)}^2 
&=&
\sum_{T \in \mathcal{T}_H} \int_{\Omega_\varepsilon \cap T} 
|\nabla \phi |^2 
\nonumber
\\
&=&
\sum_{T\in \mathcal{T}_H} \int_{\Omega_\varepsilon \cap T} 
\nabla (u-\varepsilon^2 w_\varepsilon f) \cdot \nabla \phi
+
\sum_{T\in \mathcal{T}_H} \int_{\Omega_\varepsilon \cap T}
\nabla (\varepsilon^2 w_\varepsilon f - v_H) \cdot \nabla \phi
\nonumber
\\
&=&
\sum_{T \in \mathcal{T}_H} \int_{\Omega_\varepsilon \cap T}
\nabla (u-\varepsilon^2 w_\varepsilon f) \cdot \nabla \phi
+
\sum_{T\in \mathcal{T}_H} \int_{\Omega_\varepsilon \cap T} 
(-\Delta (\varepsilon^2 w_\varepsilon f - v_H)) \phi
\nonumber
\\
&& + \varepsilon^2 \sum_{T \in \mathcal{T}_H}
\int_{\partial (T \cap \Omega_\varepsilon)} 
\phi \ n \cdot \nabla (w_\varepsilon f)
- 
\sum_{T \in \mathcal{T}_H}
\int_{\partial (T \cap \Omega_\varepsilon)} 
\phi \ n \cdot \nabla v_H.
\label{eq:louis4}
\end{eqnarray}
We now use the fact that $\phi = u - v_H = 0$ on $\partial \Omega_\eps$. We hence have that
\begin{equation}
\label{eq:bord_nul}
\int_{\partial (T \cap \Omega_\varepsilon)} 
\phi \ n \cdot \nabla (w_\varepsilon f)
=
\int_{(\partial T) \cap \Omega_\varepsilon} 
\phi \ n \cdot \nabla (w_\varepsilon f)
\end{equation}
and likewise for the last term of~\eqref{eq:louis4}. Equalities of the type~\eqref{eq:bord_nul} will often be used in the sequel. We thus write~\eqref{eq:louis4} as
\begin{eqnarray*}
| \phi |_{H^1_H(\Omega_\eps)}^2 
&=&
\sum_{T \in \mathcal{T}_H} \int_{\Omega_\varepsilon \cap T}
\nabla (u-\varepsilon^2 w_\varepsilon f) \cdot \nabla \phi
+
\sum_{T\in \mathcal{T}_H} \int_{\Omega_\varepsilon \cap T} 
(-\Delta (\varepsilon^2 w_\varepsilon f - v_H)) \phi
\nonumber
\\
&& + \varepsilon^2 \sum_{T \in \mathcal{T}_H}
\int_{(\partial T) \cap \Omega_\varepsilon} 
\phi \ n \cdot \nabla (w_\varepsilon f)
- 
\sum_{T \in \mathcal{T}_H}
\int_{(\partial T) \cap \Omega_\varepsilon} 
\phi \ n \cdot \nabla v_H.
\end{eqnarray*}
The fourth term in the above right-hand side vanishes. Indeed, on each
edge~$E$, we know from~\eqref{eq:par_construction} that $n \cdot \nabla v_H$ is constant and $\dps \int_E \phi =\int_{E \cap \Omega_\eps} \phi =0$. The third term can be written
$$
\varepsilon^2 \sum_{T \in \mathcal{T}_H}
\int_{(\partial T) \cap \Omega_\varepsilon} 
\phi \ n \cdot \nabla (w_\varepsilon f)
=
\varepsilon^2 \sum_{E \in \mathcal{E}_H}
\int_{E \cap \Omega_\varepsilon} 
[[ \phi ]] \ n \cdot \nabla (w_\varepsilon f).
$$
Indeed, $w \in C^1\left(\overline{Y \setminus B}\right)$ (see~\eqref{eq:regul_w}) and $f \in H^2(\Omega)$, hence $\nabla (w_\varepsilon f)$ has a well-defined trace on $E \cap \Omega_\eps$. 
We are thus left with
\begin{eqnarray}
| \phi |_{H^1_H(\Omega_\eps)}^2 
&=&
\sum_{T \in \mathcal{T}_H} \int_{\Omega_\varepsilon \cap T}
\nabla (u-\varepsilon^2 w_\varepsilon f) \cdot \nabla \phi
+
\sum_{T\in \mathcal{T}_H} \int_{\Omega_\varepsilon \cap T} 
(-\Delta (\varepsilon^2 w_\varepsilon f - v_H)) \phi
\nonumber
\\
&&\qquad + \varepsilon^2 \sum_{E \in \mathcal{E}_H}
\int_{E \cap \Omega_\varepsilon} 
[[\phi]] \ n \cdot \nabla (w_\varepsilon f).
\label{eq:decompo_error}
\end{eqnarray}
We now successively bound the three terms of the right-hand side
of~\eqref{eq:decompo_error}. Loosely speaking:
\begin{itemize}
\item the first term is small because of the homogenization result~\eqref{eq:lions-general}, that states that $\varepsilon^2 w_\varepsilon f$ is indeed an accurate approximation of $u$.
\item the second term is small because, at the leading order term in $\eps$, the first factor in the integrand is equal to
$-\Delta \left( \varepsilon^2 w_\varepsilon f \right) + \Delta v_H \approx f - \Pi_H f$ which is small due to~\eqref{eq:P1_EF}.
\item estimating the third term is more involved. An essential ingredient is the fact that $w$ is a periodic function. We are thus in position to apply our Lemma~\ref{lem:malin}.
\end{itemize}

\paragraph{Step 1a}
The first term of~\eqref{eq:decompo_error} is easily estimated as follows:
\begin{eqnarray}
\left|
\sum_{T \in \mathcal{T}_H} \int_{\Omega_\varepsilon \cap T}
\nabla (u-\varepsilon^2 w_\varepsilon f) \cdot \nabla \phi
\right|
&\leq&
\sum_{T \in \mathcal{T}_H} 
\| \nabla (u-\varepsilon^2 w_\varepsilon f) \|_{L^2(\Omega_\varepsilon \cap T)}
\
\| \nabla \phi \|_{L^2(\Omega_\varepsilon \cap T)}
\nonumber
\\
&\leq&
| u-\varepsilon^2 w_\varepsilon f |_{H^1(\Omega_\eps)}
\
| \phi |_{H^1_H(\Omega_\eps)}
\nonumber
\\
&\leq&
C \varepsilon^{3/2} \, {\cal N}(f) \,  
| \phi |_{H^1_H(\Omega_\eps)},
\label{eq:bound_1}
\end{eqnarray}
where we have used the discrete Cauchy-Schwarz inequality in the second line and the homogenization result~\eqref{eq:lions-general} in the third line. 

\paragraph{Step 1b}
We next turn to the second term of the right-hand side
of~\eqref{eq:decompo_error}, that we write as follows, using the
corrector equation~\eqref{eq:corrector-lions} and~\eqref{eq:par_construction}:
$$
\sum_{T\in \mathcal{T}_H} \int_{\Omega_\varepsilon \cap T} 
(-\Delta (\varepsilon^2 w_\varepsilon f - v_H)) \phi
=
\int_{\Omega _{\varepsilon }}(f 
- 2 \varepsilon (\nabla w)_\varepsilon \cdot \nabla f
- \varepsilon^2 w_\varepsilon \Delta f
- \Pi_H f ) \phi.
$$
We thus obtain
\begin{eqnarray*}
&&
\left|
\sum_{T\in \mathcal{T}_H} \int_{\Omega_\varepsilon \cap T} 
(-\Delta (\varepsilon^2 w_\varepsilon f - v_H)) \phi
\right|
\\
&\leq &
\Big( \| f - \Pi_H f \|_{L^2(\Omega)} + 
2 \varepsilon \| \nabla w \|_{L^\infty}
\| \nabla f \|_{L^2(\Omega)} + 
\eps^2 \| w \|_{L^\infty} \| \Delta f \|_{L^2(\Omega)}
\Big) \,
\| \phi \|_{L^2(\Omega_\varepsilon)} 
\\
&\leq &
C \varepsilon \, \left(
C H \| \nabla f \|_{L^2(\Omega)} + C \varepsilon {\cal N}(f) 
\right)
| \phi |_{H^1_H(\Omega_\eps)},
\end{eqnarray*}
where ${\cal N}(f)$ is defined by~\eqref{eq:def_N_f} and where, in the
last line, we have used~\eqref{eq:P1_EF},~\eqref{eq:regul_w} and~\eqref{eq:poincare_perfore2}.
We deduce that
\begin{equation}
\label{eq:bound_2}
\left|
\sum_{T\in \mathcal{T}_H} \int_{\Omega_\varepsilon \cap T} 
(-\Delta (\varepsilon^2 w_\varepsilon f - v_H)) \phi
\right|
\leq 
C \varepsilon \, (H+\varepsilon) \, {\cal N}(f) \, 
|\phi |_{H^1_H(\Omega_\eps)}.
\end{equation}

\paragraph{Step 1c}
The final stage of Step 1 is devoted to bounding the third
term of the right-hand side of~\eqref{eq:decompo_error}.

In view of the assumptions on the mesh (rationality of the slopes, in short), we first observe that, for any
edge $E \in {\cal E}_H$, the function 
$\dps x \in E \mapsto n \cdot \nabla w\left( \frac{x}{\eps} \right)$ 
is periodic with period $q_E \eps$, for some $q_E \in \NN^\star$
satisfying $|q_E| \leq C$ for some $C$ independent of the mesh edge and of $H$. We
denote by $\langle n \cdot (\nabla w)_\eps \rangle_E$ the average of that
function over one period, and decompose the third
term of the right-hand side of~\eqref{eq:decompo_error} as follows:
\begin{eqnarray}
&&
\varepsilon^2 \sum_{E \in \mathcal{E}_H}
\int_{E \cap \Omega_\varepsilon} 
[[\phi]] \ n \cdot \nabla (w_\varepsilon f)
\nonumber
\\
&=&
\varepsilon \sum_{E\in \mathcal{E}_H}
\int_{E\cap \Omega_\varepsilon} [[\phi]] \ \Big( n \cdot (\nabla
  w)_\varepsilon  - \langle n \cdot (\nabla w)_\eps \rangle_E \Big) f
\nonumber
\\
& + &
\varepsilon \sum_{E\in \mathcal{E}_H} 
\langle n \cdot (\nabla w)_\eps \rangle_E
\int_{E\cap \Omega_\varepsilon} [[\phi]] \ f
+
\varepsilon^2 \sum_{E \in \mathcal{E}_H}
\int_{E \cap \Omega_\varepsilon} 
[[\phi]] \ w_\varepsilon \ n \cdot \nabla f.
\label{eq:terme3}
\end{eqnarray}
We successively estimate the three terms of the right-hand
side of~\eqref{eq:terme3}. In some formulae below, we will make the
following slight abuse of notation. We will extend
the function $\phi = u - v_H$ by 0 inside the perforations
$B_\varepsilon$, so that we can understand~$\phi$ either as a function in
$H_0^1(\Omega_\varepsilon)$ or in $H_0^1(\Omega)$. 

\bigskip

We consider the first term of the right-hand side of~\eqref{eq:terme3}, which we evaluate essentially using the fact that it contains a periodic oscillatory function of zero mean. We claim that
\begin{multline}
\label{eq:claim3_1}
\left|
\int_{E\cap \Omega_\varepsilon} [[\phi]] \ \Big( n \cdot (\nabla
  w)_\varepsilon  - \langle n \cdot (\nabla w)_\eps \rangle_E \Big) f
\right|
\\
\leq
C \, \sqrt{\eps} \, \| f \|_{H^1(E)} 
\| \, [[ \phi ]] \, \|_{H^{1/2}(E)}
\end{multline}
for a constant $C$ independent of the edge $E$, $\eps$ and $H$. Indeed, we first
note that $u$ and $v_H$ vanish on $\Omega \setminus \Omega_\eps$, so
$\phi=u-v_H$ vanishes on $E \cap (\Omega \setminus \Omega_\eps)$, hence
\begin{multline}
\label{eq:debut}
\int_{E\cap \Omega_\varepsilon} [[\phi]] \ \Big( n \cdot (\nabla
  w)_\varepsilon  - \langle n \cdot (\nabla w)_\eps \rangle_E \Big) f
\\
=
\int_E [[\phi]] \ \Big( n \cdot (\nabla
  w)_\varepsilon  - \langle n \cdot (\nabla w)_\eps \rangle_E \Big) f.
\end{multline}
Second, using the regularity~\eqref{eq:regul_w} of $w$, we obviously have that
\begin{equation}
\label{eq:l2}
\left|
\int_E [[\phi]] \ \Big( n \cdot (\nabla
  w)_\varepsilon  - \langle n \cdot (\nabla w)_\eps \rangle_E \Big) f
\right|
\leq
C \, \|f\|_{L^2(E)} \, \| \, [[\phi]] \, \|_{L^2(E)}.
\end{equation}
Third, suppose momentarily that $[[ \phi ]] \in H^1(E) \subset
C^0(E)$. We infer from 
the fact that $\dps \int_E [[\phi]] = 0$ that $[[ \phi ]]$, and hence
$[[ \phi ]] \, f$, vanishes at least at one point on $E$. In addition,
the function $n \cdot (\nabla
  w)_\varepsilon  - \langle n \cdot (\nabla w)_\eps \rangle_E$ 
is periodic on $E$ (with a period $q_E$ uniformly bounded with respect to $E
\in {\cal E}_H$) and of zero mean. We are then in
position to apply Lemma~\ref{lem:malin}, which yields, using~\eqref{eq:regul_w},
\begin{eqnarray}
\left|
\int_E [[\phi]] \ \Big( n \cdot (\nabla
  w)_\varepsilon  - \langle n \cdot (\nabla w)_\eps \rangle_E \Big) f
\right|
& \leq &
4 \, \eps \, q_E \| \nabla w \|_{C^0} 
\| \nabla_E \left( f [[ \phi ]] \right) \|_{L^1(E)}
\nonumber
\\
& \leq &
C \, \eps \, \| f \|_{H^1(E)} 
\| \, [[ \phi ]] \, \|_{H^1(E)},
\label{eq:h1}
\end{eqnarray}
where, for any function $g$, $\nabla_E g = t_E \cdot \nabla g$ where $t_E$ is a unit vector tangential to the edge $E$.
By interpolation between~\eqref{eq:l2} and~\eqref{eq:h1}, and
using~\eqref{eq:debut}, we obtain~\eqref{eq:claim3_1}, with a constant $C$ (independent of the edge) which is independent from $\eps$ and $H$ by scaling arguments (see~\cite{companion-article} for details).

We then deduce from~\eqref{eq:claim3_1} that the first term of the
right-hand side of~\eqref{eq:terme3} satisfies
\begin{eqnarray*}
&&
\left|
\varepsilon \sum_{E\in \mathcal{E}_H}
\int_{E\cap \Omega_\varepsilon} [[\phi]] \ \Big( n \cdot (\nabla
  w)_\varepsilon  - \langle n \cdot (\nabla w)_\eps \rangle_E \Big) f
\right|
\nonumber
\\
&\leq&
C \, \eps^{3/2} \sum_{E\in \mathcal{E}_H} \| f \|_{H^1(E)} 
\| \, [[ \phi ]] \, \|_{H^{1/2}(E)}
\nonumber
\\
&\leq&
C \, \eps^{3/2} 
\left( \sum_{E\in \mathcal{E}_H} \| f \|^2_{H^1(E)} \right)^{1/2}
\left( \sum_{E\in \mathcal{E}_H} \| \, [[ \phi ]] \, \|^2_{H^{1/2}(E)} \right)^{1/2}
\nonumber
\\
&\leq&
C \, \eps^{3/2} 
\left( \sum_{E\in \mathcal{E}_H; \text{choose one $T \in T_E$}} 
\frac{1}{H} \| f \|^2_{H^1(T)} + H \| \nabla f \|^2_{H^1(T)} \right)^{1/2}
\nonumber
\\
&& \times
\left( 
\sum_{E\in \mathcal{E}_H} \sum_{T \in T_E} \| \nabla \phi \|^2_{L^2(T)} 
\right)^{1/2},
\end{eqnarray*}
where we have used~\eqref{eq:trace1} of Lemma~\ref{lem:trace}
and~\eqref{eq:trace3} of Corollary~\ref{coro:trace} (and, we recall, 
$T_E \subset {\cal T}_H$ denotes all the triangles sharing the edge
$E$). 
We therefore obtain that the first term of the
right-hand side of~\eqref{eq:terme3} satisfies
\begin{eqnarray}
&&
\left|
\varepsilon \sum_{E\in \mathcal{E}_H}
\int_{E\cap \Omega_\varepsilon} [[\phi]] \ \Big( n \cdot (\nabla
  w)_\varepsilon  - \langle n \cdot (\nabla w)_\eps \rangle_E \Big) f
\right|
\nonumber
\\
&\leq&
C \, \eps^{3/2} 
\left( 
\frac{1}{H} \| f \|^2_{H^1(\Omega)} + H \| \nabla f \|^2_{H^1(\Omega)} 
\right)^{1/2}
| \phi |_{H^1_H(\Omega_\eps)}
\nonumber
\\
&\leq&
C \, \eps \left( \sqrt{ \frac{\eps}{H} } \,
\| f \|_{H^1(\Omega)} + \sqrt{\eps H} \, \| \nabla f \|_{H^1(\Omega)} 
\right)
| \phi |_{H^1_H(\Omega_\eps)}.
\label{eq:terme3_1}
\end{eqnarray}

\bigskip

The second term of the right-hand side of~\eqref{eq:terme3} has no oscillatory character. It is therefore estimated
using standard arguments for Crouzeix-Raviart finite elements (using that 
$\dps \int_{E\cap \Omega_\varepsilon} [[\phi]] = 0$), and the
regularity of $w$. Introducing, for each edge $E$, the constant 
$\dps c_E = |E|^{-1} \int_E f$, we bound the second term of the
right-hand side of~\eqref{eq:terme3} as follows: 
\begin{eqnarray}
\nonumber
&&
\left|
\varepsilon \sum_{E\in \mathcal{E}_H} 
\langle n \cdot (\nabla w)_\eps \rangle_E
\int_{E\cap \Omega_\varepsilon} [[\phi]] \ f
\right|
\\
\nonumber
&=&
\left|
\varepsilon \sum_{E\in \mathcal{E}_H} 
\langle n \cdot (\nabla w)_\eps \rangle_E
\int_{E\cap \Omega_\varepsilon} [[\phi]] \ (f - c_E)
\right|
\\
\nonumber
& \leq &
C\varepsilon \sum_{E\in \mathcal{E}_H}
\| \, [[\phi ]] \, \|_{L^2(E)} \,
\| f-c_E \|_{L^2(E)}
\\
\nonumber
& \leq &
C\varepsilon 
\left( \sum_{E\in \mathcal{E}_H} \| \, [[\phi ]] \, \|^2_{L^2(E)} \right)^{1/2}
\left( \sum_{E\in \mathcal{E}_H} \| f-c_E \|^2_{L^2(E)} \right)^{1/2}
\\
\nonumber
&\leq &
C\varepsilon 
\left( 
\sum_{E\in \mathcal{E}_H} H \sum_{T \in T_E} \| \nabla \phi
\|^2_{L^2(T)} 
\right)^{1/2}
\left( \sum_{E\in \mathcal{E}_H; \text{choose one $T \in T_E$}} 
H \| \nabla f \|^2_{L^2(T)} \right)^{1/2}
\\
&\leq &
C \varepsilon H 
| \phi |_{H^1_H(\Omega_\eps)} 
\, \| \nabla f \|_{L^2(\Omega)},
\label{eq:terme3_2}
\end{eqnarray}
where we have used~\eqref{eq:regul_w},~\eqref{eq:trace2} of Corollary~\ref{coro:trace}
and~\eqref{eq:trace2_pre} of Lemma~\ref{lem:trace}. 

\bigskip

We are now left with the third term of the right-hand side
of~\eqref{eq:terme3}. This term has a prefactor $\varepsilon^2$ and all we have to prove is that the term itself is bounded. Using again~\eqref{eq:regul_w},~\eqref{eq:trace1} of
Lemma~\ref{lem:trace} and~\eqref{eq:trace2} of
Corollary~\ref{coro:trace}, we obtain
\begin{eqnarray}
\nonumber
&& \left|
\varepsilon^2 \sum_{E \in \mathcal{E}_H}
\int_{E \cap \Omega_\varepsilon} 
[[\phi]] \ w_\varepsilon \ n \cdot \nabla f
\right|
\\
&\leq&
C \varepsilon^2 \sum_{E\in \mathcal{E}_H}
\| \nabla f \|_{L^2(E)} \, \| \, [[\phi]] \, \|_{L^2(E)}
\nonumber
\\
&\leq &
C \varepsilon^2 
\left( \sum_{E\in \mathcal{E}_H} \| \nabla f \|^2_{L^2(E)} \right)^{1/2} 
\left( \sum_{E\in \mathcal{E}_H} \| \, [[\phi]] \, \|^2_{L^2(E)} \right)^{1/2}
\nonumber
\\
&\leq &
C \varepsilon^2 
\left( \frac{1}{H} \sum_{T\in \mathcal{T}_H} \| \nabla f \|^2_{H^1(T)} \right)^{1/2} 
\left( H \sum_{T\in \mathcal{T}_H} \| \nabla \phi \|^2_{L^2(T)} \right)^{1/2}
\nonumber
\\
&\leq &
C \varepsilon^2 \| \nabla f\|_{H^1(\Omega)} \, | \phi |_{H^1_H(\Omega_\eps)}.
\label{eq:terme3_3}
\end{eqnarray}

Collecting~\eqref{eq:terme3}, \eqref{eq:terme3_1}, \eqref{eq:terme3_2}
and~\eqref{eq:terme3_3}, we obtain that the third term of the right-hand
side of~\eqref{eq:decompo_error} satisfies
\begin{multline}
\left|
\varepsilon^2 \sum_{E \in \mathcal{E}_H}
\int_{E \cap \Omega_\varepsilon} 
[[\phi]] \ n \cdot \nabla (w_\varepsilon f)
\right|
\leq 
C \eps \left( 
\sqrt{ \frac{\eps}{H} } \,
\| f \|_{H^1(\Omega)} + \sqrt{\eps H} \, \| \nabla f \|_{H^1(\Omega)} 
\right.
\\
+ 
H \| \nabla f \|_{L^2(\Omega)} 
+
\varepsilon \| \nabla f\|_{H^1(\Omega)}
\Big) | \phi |_{H^1_H(\Omega_\eps)}.
\label{eq:bound_3}
\end{multline}

\paragraph{Conclusion of Step 1:}
Collecting~\eqref{eq:decompo_error},~\eqref{eq:bound_1},~\eqref{eq:bound_2}
and~\eqref{eq:bound_3}, we deduce that
\begin{eqnarray}
| u-v_H |_{H^1_H(\Omega_\eps)}
&=&
| \phi |_{H^1_H(\Omega_\eps)}
\nonumber
\\
&\leq& 
C \eps \left( \sqrt{\varepsilon} + H + \sqrt{\frac{\varepsilon}{H}} \right) 
\left( \|f\|_{L^\infty(\Omega)} + \|\nabla f\|_{H^1(\Omega)} \right).
\label{eq:fin_step1}
\end{eqnarray}
This concludes the first step of the proof.

\paragraph{Step 2: Estimation of $u_H-v_H$:}

Denoting by $\phi_H = u_H-v_H$, where $u_H$ is the solution to~\eqref{2DH} and $v_H$ is defined by~\eqref{eq:def_fct_vh}, we observe that
\begin{equation}
\label{eq:louis1}
| \phi_H |_{H^1_H(\Omega_\eps)}^2 
=
a_H(u_H-v_H,\phi_H)
=
a_H(u-v_H,\phi_H) + a_H(u_H-u,\phi_H),
\end{equation}
where, we recall, $a_H$ is defined by~\eqref{eq:def_aH}. 
The first term is estimated using~\eqref{eq:fin_step1}. 
The main part of this Step is thus devoted to estimating the second term of~\eqref{eq:louis1}. 

Since $\phi_H \in V_H$, we deduce from the discrete variational formulation~\eqref{2DH} that
\begin{eqnarray}
&&
a_H(u_H-u,\phi_H)
\nonumber
\\
&=&
\int_{\Omega_\eps} f \phi_H - \sum_{T \in {\cal T}_H} 
\int_{T \cap \Omega_\eps} \nabla u \cdot \nabla \phi_H
\nonumber
\\
&=&
\int_{\Omega_\eps} f \phi_H
- 
\sum_{T \in {\cal T}_H} 
\int_{T \cap \Omega_\eps} \nabla (u - \eps^2 w_\eps f) \cdot \nabla \phi_H
- 
\eps^2 \sum_{T \in {\cal T}_H} 
\int_{T \cap \Omega_\eps} \nabla (w_\eps f) \cdot \nabla \phi_H
\nonumber
\\
&=&
\int_{\Omega_\eps} f \phi_H 
- 
\sum_{T \in {\cal T}_H} 
\int_{T \cap \Omega_\eps} \nabla (u - \eps^2 w_\eps f) \cdot \nabla \phi_H
\nonumber
\\
&&
\qquad - 
\eps^2 \sum_{T \in {\cal T}_H} 
\int_{\partial (T \cap \Omega_\eps)} \phi_H \, n \cdot \nabla (w_\eps f)
+
\eps^2 \sum_{T \in {\cal T}_H} 
\int_{T \cap \Omega_\eps} \phi_H \Delta (w_\eps f).
\label{eq:louis2_pre}
\end{eqnarray}
Since $\phi_H = 0$ on $\partial \Omega_\eps$, we can take the integral in the third term of~\eqref{eq:louis2_pre} only on $(\partial T) \cap \Omega_\eps$. Using~\eqref{eq:corrector-lions} for the fourth term, we obtain that
\begin{eqnarray}
&&
a_H(u_H-u,\phi_H)
\nonumber
\\
&=&
- 
\sum_{T \in {\cal T}_H} 
\int_{T \cap \Omega_\eps} \nabla (u - \eps^2 w_\eps f) \cdot \nabla \phi_H
- 
\eps^2 \sum_{T \in {\cal T}_H} 
\int_{(\partial T) \cap \Omega_\eps} \phi_H \, n \cdot \nabla (w_\eps f)
\nonumber
\\
&&
\qquad 
+
\eps \sum_{T \in {\cal T}_H} 
\int_{T \cap \Omega_\eps} \phi_H \Big( 
2 (\nabla w)_\eps \cdot \nabla f
+
\eps w_\eps \Delta f \Big).
\label{eq:louis2}
\end{eqnarray}

We now successively bound the three terms of the right-hand side
of~\eqref{eq:louis2}. The first term is estimated simply using homogenization theory, since it is not specifically related to the discretization. We write, as in~\eqref{eq:bound_1},
\begin{equation}
\label{eq:bound_1new}
\left|
\sum_{T \in \mathcal{T}_H} \int_{\Omega_\varepsilon \cap T}
\nabla (u-\varepsilon^2 w_\varepsilon f) \cdot \nabla \phi_H
\right|
\leq
C \varepsilon^{3/2} \, {\cal N}(f) \,  
| \phi_H |_{H^1_H(\Omega_\eps)}.
\end{equation}
For the second term of the right-hand side
of~\eqref{eq:louis2}, we use the same arguments as for the third term of~\eqref{eq:decompo_error}. We have
$$
\eps^2 \sum_{T \in {\cal T}_H} 
\int_{(\partial T) \cap \Omega_\eps} \phi_H \, n \cdot \nabla (w_\eps f)
=
\eps^2 \sum_{E \in {\cal E}_H} 
\int_{E \cap \Omega_\eps} [[\phi_H]] \, n \cdot \nabla (w_\eps f),
$$
and we note that $\dps \int_E [[ \phi_H ]] = 0$. We therefore can use
the same arguments as in Step 1c, and obtain, similarly
to~\eqref{eq:bound_3},
\begin{multline}
\left|
\varepsilon^2 \sum_{E \in \mathcal{E}_H}
\int_{E \cap \Omega_\varepsilon} 
[[\phi_H]] \ n \cdot \nabla (w_\varepsilon f)
\right|
\\
\leq 
C \eps \left( 
\sqrt{ \frac{\eps}{H} } \,
\| f \|_{H^1(\Omega)} 
+ 
(\varepsilon +H) \| \nabla f\|_{H^1(\Omega)}
\right) | \phi_H |_{H^1_H(\Omega_\eps)}.
\label{eq:bound_3new}
\end{multline}

We next turn to the third term of the right-hand side
of~\eqref{eq:louis2}, which is estimated using the Cauchy-Schwarz inequality, the fact that the second factor is bounded and the first factor satisfies a Poincar\'e inequality. Indeed, using the regularity~\eqref{eq:regul_w} of $w$ and
the Poincar\'e inequality~\eqref{eq:poincare_perfore2} satisfied by $\phi_H \in V_H \subset W_H$, we have
\begin{eqnarray}
&&
\left|
\eps \sum_{T \in {\cal T}_H} 
\int_{T \cap \Omega_\eps} \phi_H \Big( 
2 (\nabla w)_\eps \cdot \nabla f
+
\eps w_\eps \Delta f \Big)
\right|
\nonumber
\\
&\leq&
C \eps \sum_{T \in {\cal T}_H} 
\| \phi_H \|_{L^2(T \cap \Omega_\eps)} \, 
\left( \| \nabla f \|_{L^2(T \cap \Omega_\eps)} + \eps 
\| \Delta f \|_{L^2(T \cap \Omega_\eps)} \right)
\nonumber
\\
&\leq&
C \eps \| \phi_H \|_{L^2(\Omega_\eps)} \, 
\| \nabla f \|_{H^1(\Omega)}
\nonumber
\\
&\leq&
C \eps^2 | \phi_H |_{H^1_H(\Omega_\eps)} \, 
\| \nabla f \|_{H^1(\Omega)}.
\label{eq:bound_T3}
\end{eqnarray}
Collecting~\eqref{eq:louis2}, \eqref{eq:bound_1new},
\eqref{eq:bound_3new} and~\eqref{eq:bound_T3}, we deduce that
\begin{multline}
\label{eq:louis3}
\left|
a_H(u_H-u,\phi_H)
\right|
\\
\leq
C \eps \left( \sqrt{\eps} + H +
\sqrt{ \frac{\eps}{H} }
\right)
\left( \| f \|_{L^\infty(\Omega)} + \| \nabla f \|_{H^1(\Omega)}
\right) \,
| \phi_H |_{H^1_H(\Omega_\eps)}.
\end{multline}
Inserting~\eqref{eq:louis3} into~\eqref{eq:louis1}, we have
\begin{eqnarray*}
&&
| \phi_H |_{H^1_H(\Omega_\eps)}^2 
\\
& \leq &
a_H(u-v_H,\phi_H)
+
C \eps \left( \sqrt{\eps} + H +
\sqrt{ \frac{\eps}{H} }
\right)
\left( \| f \|_{L^\infty(\Omega)} + \| \nabla f \|_{H^1(\Omega)}
\right) \,
| \phi_H |_{H^1_H(\Omega_\eps)}
\\
&\leq &
|u-v_H|_{H^1_H(\Omega_\eps)} \, | \phi_H |_{H^1_H(\Omega_\eps)}
\\
&& \qquad + 
C \eps \left( \sqrt{\eps} + H +
\sqrt{ \frac{\eps}{H} }
\right)
\left( \| f \|_{L^\infty(\Omega)} + \| \nabla f \|_{H^1(\Omega)}
\right) \,
| \phi_H |_{H^1_H(\Omega_\eps)}.
\end{eqnarray*}
Factoring out $| \phi_H |_{H^1_H(\Omega_\eps)}$, and
using~\eqref{eq:fin_step1}, we deduce that
\begin{eqnarray}
| u_H-v_H |_{H^1_H(\Omega_\eps)}
&=&
| \phi_H |_{H^1_H(\Omega_\eps)}
\nonumber
\\
&\leq& 
C \eps \left( \sqrt{\varepsilon} + H + \sqrt{\frac{\varepsilon}{H}} \right) 
\left( \| f \|_{L^\infty(\Omega)} + \| \nabla f \|_{H^1(\Omega)}
\right).
\label{eq:fin_step2}
\end{eqnarray}

\paragraph{Conclusion} We deduce from~\eqref{eq:fin_step1},
\eqref{eq:fin_step2} and the triangle inequality that
$$
| u-u_H |_{H^1_H(\Omega_\eps)}
\leq 
C \eps \left( \sqrt{\varepsilon} + H + \sqrt{\frac{\varepsilon}{H}} \right) 
\left( \| f \|_{L^\infty(\Omega)} + \| \nabla f \|_{H^1(\Omega)}
\right),
$$
which is the desired estimate~\eqref{eq:mainresult}. This concludes the proof of Theorem~\ref{theo:main}.

\section{Numerical tests}
\label{sec:Numerical-tests}

We now solve~\eqref{eq:genP} for some particular settings, comparing our approach with other existing MsFEM type methods. As pointed out in the introduction, we numerically explore the influence of three parameters: 
\begin{itemize}
\item (i) the boundary conditions imposed to define the MsFEM basis functions and (ii) the addition, or not, of bubble functions. To do so, in Section~\ref{sec:num-comp}, we compare the approach we propose with other existing approaches, considering two versions of each approach, one with and the other without bubble functions. 
\item (iii) the possible intersections of the perforations with the edges of mesh elements. We address this question in Section~\ref{sec:num-rob}, and check there the robustness of our approach with respect to the
location of the perforations: the fact
that the mesh intersects, or does not intersect, the perforations has a
very little influence on the (good) accuracy of our approach, in
contrast to other approaches. 
\end{itemize}
We eventually turn in Section~\ref{sec:non-per} to a non-periodic
test-case, where we again show the excellent performance of our approach. 

\medskip

We mention that, in all our numerical experiments, we actually
do not directly solve~\eqref{eq:genP} but a penalized version of this
problem: find $u\in H_0^1(\Omega)$
such that  
$$
-\dive (\nu \nabla u)+\sigma u=f
$$
with the following penalization parameters:
$$
\nu =\left\{ 
\begin{array}{c}
1 \text{ in $\Omega \setminus B_\varepsilon$} 
\\ 
\dps \frac{1}{h} \text{ in $B_\varepsilon$}
\end{array}
\right. 
\quad \text{and} \quad
\sigma =\left\{ 
\begin{array}{c}
0 \text{ in $\Omega \setminus B_\varepsilon$} 
\\ 
\dps \frac{1}{h^3} \text{ in $B_\varepsilon$}
\end{array}
\right.
,
$$
where $h$ is the fine-scale mesh size used to precompute the highly
oscillatory basis functions (see~\cite{angot,carballal} for more
details on the penalization approach and on the above choice of $\nu$ and $\sigma$). In practice, the chosen fine-scale mesh size always satisfies $h \leq \eps/10$.

Note that, because we use a penalized approach, we do not have to mesh $\Omega_\eps$, which could be cumbersome and could possibly request elements of small size (comparable to the small size $\eps$ present in the geometry of $\Omega_\eps$). In addition, if we were working with a mesh of $\Omega_\eps$, we might face difficulties with the oversampling variant of the MsFEM approach that we compare here with our approach. Indeed, edges of the oversampling domain may intersect the perforations. Properly defining the MsFEM basis functions in such a case would not be straightforward. For these two reasons, we consider a penalization approach.

\subsection{Comparison with existing approaches}
\label{sec:num-comp}

We solve~\eqref{eq:genP} on the domain
$\Omega=(0,1)^2$, with the 
right-hand side~$\dps f(x,y) =\sin \frac{\pi x}{2} \, \sin \frac{\pi
  y}{2}$, and we take $B_\varepsilon$ the set of discs of radius
$0.35\varepsilon$ periodically located on the regular grid of period
$\varepsilon =0.03$. 
For the reference solution, we use a mesh of size
  $1024 \times 1024$.
%We display on Figure~\ref{fig:ref-solution} the
%reference solution $u$ computed using the penalization procedure
%mentioned above.

%\begin{figure}[htbp]
%\centering
%\includegraphics[width=7truecm]{cam/sol-ex-trous.eps}
%\caption{Reference solution to~\eqref{eq:genP} on a mesh of size
%  $1024 \times 1024$.
%\label{fig:ref-solution}}
%\end{figure}

\medskip

The approaches we compare our approach with are the following four respective approaches:
\begin{itemize}
\item the standard Q1 finite element method on the coarse mesh of size $H$. Of course, we do not expect that method to perform well for this multiscale problem and we only consider it as a ``normalization''.
\item the MsFEM with linear boundary conditions. Although this method is now a bit outdated, it is still considered as the primary MsFEM approach, upon which all the other variants are built.
\item the MsFEM with oscillatory boundary conditions.
This variant (in the form presented in~\cite{hou1999}) is restricted to
the two-dimensional setting. It 
uses boundary conditions provided by the solution to
the oscillatory ordinary differential equation obtained by taking the
trace of the original equation on the edge considered. The approach performs fairly well on a number of cases, although it may also fail.
\item the variant of MsFEM using oversampling. This variant is often considered as the ``gold standard'', although it includes a parameter (the oversampling ratio), the value of which should be carefully chosen. When this parameter is taken large, the method becomes (possibly prohibitively) expensive.  
\end{itemize}
In addition, we consider for each of those approaches, and for our
specific Crouzeix-Raviart type approach, two variants:
one with, and the other without a specific enrichment of the basis set
elements using bubble functions. For all approaches but the
Crouzeix-Raviart type approach that we propose, the bubble $\Psi$ on the
quadrangle $Q$ is defined as the solution to
$$
-\Delta \Psi = 1 \text{ on $Q \cap \Omega_\varepsilon$}, 
\quad  \Psi = 0 \text{ on $\partial(Q \cap \Omega_\varepsilon)$}.
$$
For the Crouzeix-Raviart approach, the bubble function $\Psi$ has been defined in Section~\ref{ssec:msfem} by~\eqref{eq:def_psi}.

\begin{remark}
Other variants of the MsFEM approach have also been proposed, such as the 
Petrov-Galerkin variant with oversampling~\cite{hou2004}. We do not consider this variant here, and refer to our previous work~\cite{companion-article} for some elements of comparison (in a slightly different context).
\end{remark}

For a given mesh size $H$, the cost for computing the basis functions (offline stage) varies from one MsFEM variant to the other. However, for a fixed $H$, all methods without (respectively, with) bubble functions essentially share the same cost to solve the macroscopic problem on $\Omega$ (online stage). More precisely, for a given cartesian mesh, and when using variants including the bubble functions, there are 1.5 times more degrees of freedom in our Crouzeix-Raviart approach than in the three alternative MsFEM approaches mentioned above. Since a logarithmic scaling is used for the x-axis in the figures below, this extra cost does not change the qualitative conclusions we draw below.

\medskip

The numerical results we have obtained in the regime where the meshsize $H$
is of the order of, or larger than, the parameter $\eps$ are presented
on~Figure~\ref{fig:errors}. For all values of the meshsize~$H$, and for both $L^2$ and broken $H^1$ norms, a
definite superiority of our approach over 
all other approaches is observed, and the interest of adding bubble
functions to the basis set is, for each approach, also evident.

\medskip

A side remark is the following. On Figure~\ref{fig:errors}, we observe that, when using bubble functions, the error decreases as $H$ increases. This might seem counterintuitive at first sight. Note however that, when $H$ increases, the cost of computing each basis function increases, as we need to solve a local problem (discretized on a mesh of size $h$ controlled by the value of $\eps$) on a larger coarse element. In contrast to traditional FEM, increasing $H$ does not correspond to reducing the overall computational cost. For MsFEM approaches, increasing $H$ actually corresponds to decreasing the online cost but increasing the offline cost. The regime of interest is that of moderate values of $H$, for which the offline stage cost is acceptable. We only show the right part of Figure~\ref{fig:errors} (corresponding to large values of $H$, leading to a prohibitively expensive offline stage) for the sake of completeness. 

\begin{figure}[htbp]
  \centering
\includegraphics[width=6.7truecm]{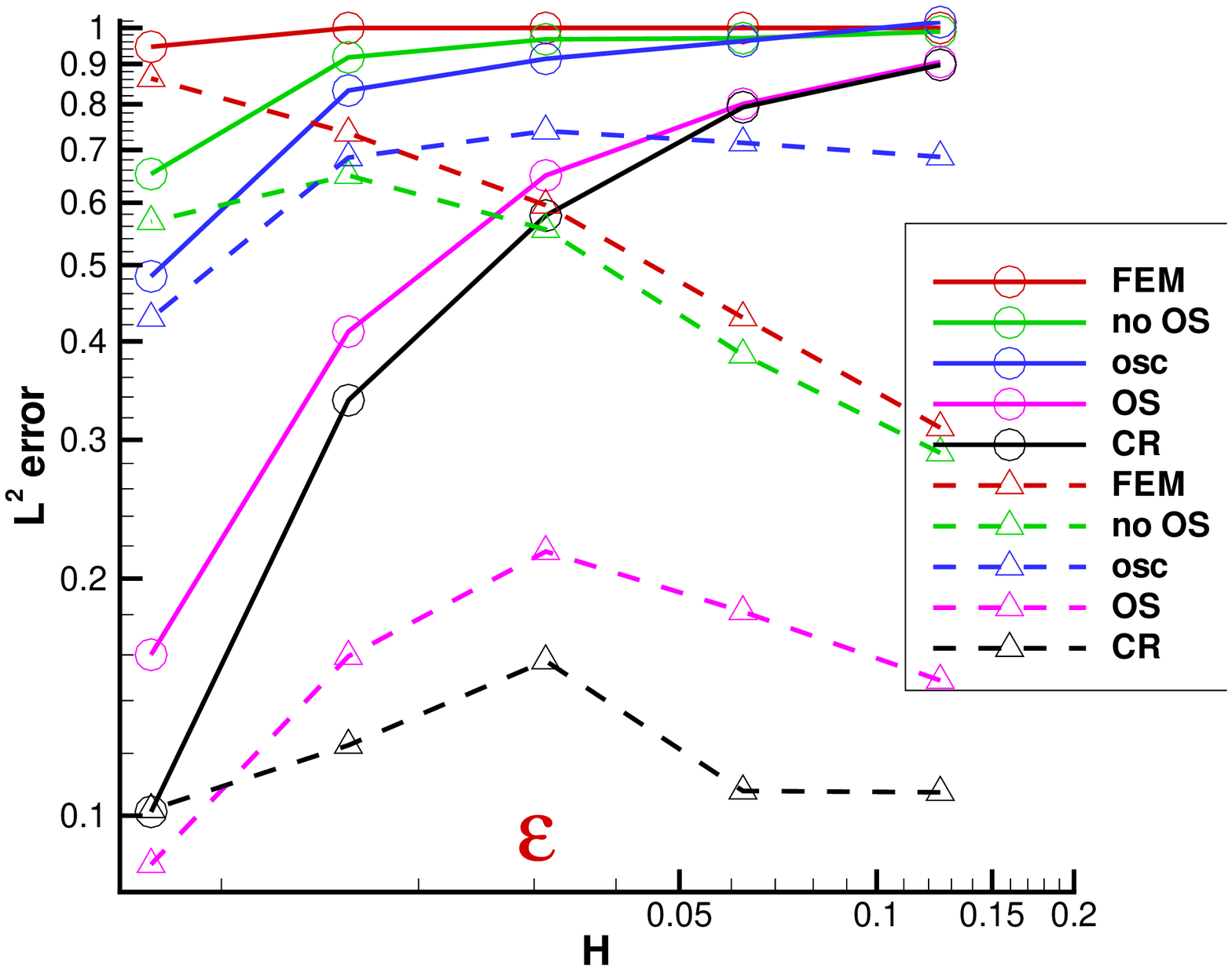}
\includegraphics[width=6.7truecm]{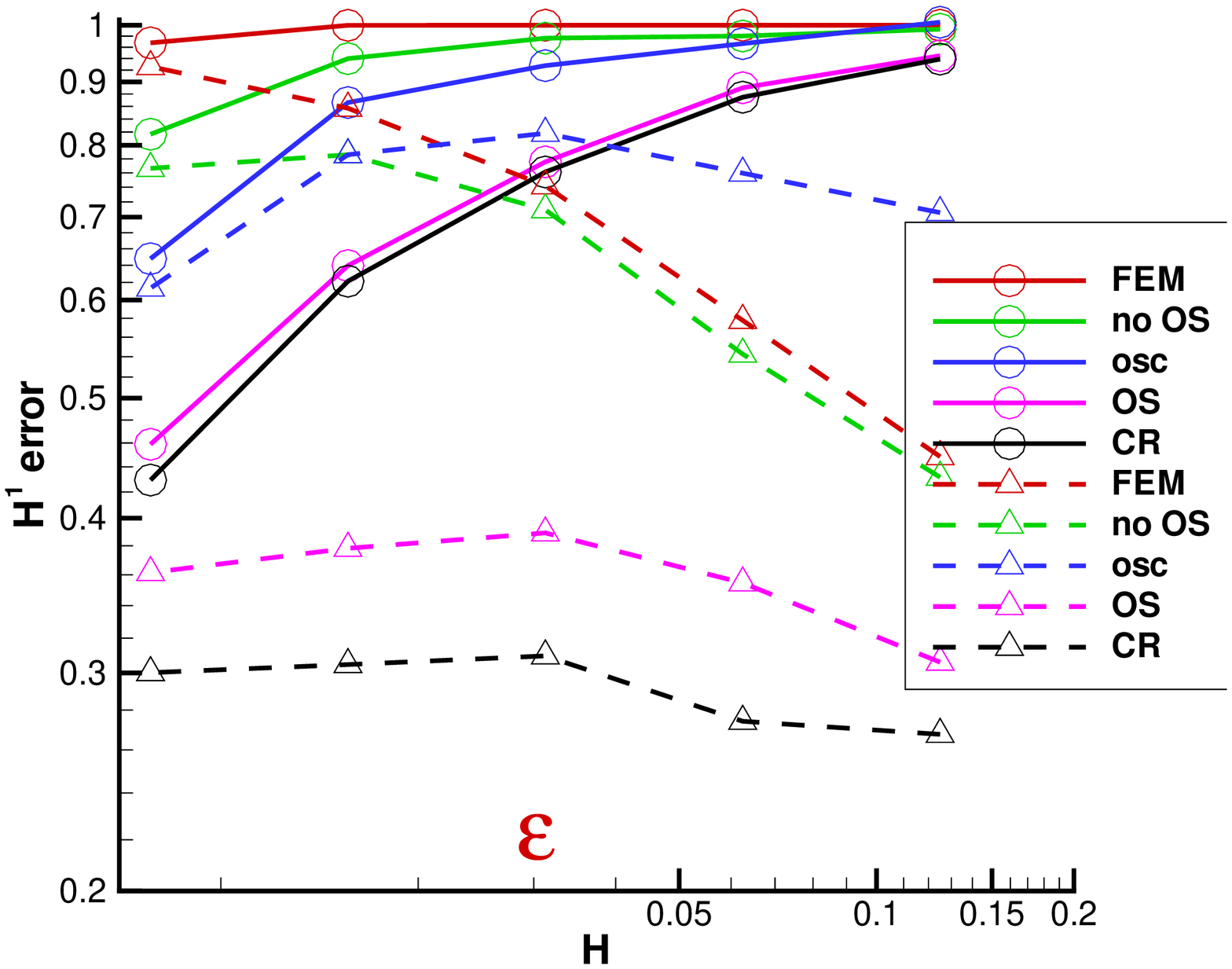}
\caption{Relative ($L^2$, left, and $H^1$-broken, right) errors with
  various approaches in the regimes $H \simeq \eps$ and $H \gtrsim \eps$:
FEM -- the standard Q1 finite elements, no OS -- MsFEM with linear boundary
conditions, osc -- MsFEM with oscillatory boundary
conditions, OS -- MsFEM with oversampling (where the size of the
quadrangles used to compute the basis functions is $3H \times 3H$), CR
-- the MsFEM approach \`a la Crouzeix-Raviart we propose. Results for all 
these methods are represented by solid lines. The dashed lines correspond to
the variants of these methods where we enrich the finite element spaces using bubble functions.
\label{fig:errors}}
\end{figure}

\bigskip

To get a better understanding of the
approaches with bubble functions, we have run a series of tests in a regime
different from that of Figure~\ref{fig:errors}, where the meshsize $H$
is of the order of, or larger than, the parameter $\eps$. On
Figure~\ref{fig:errors_regime}, we present results corresponding to the
regime $H \ll \eps$. This is performed only for the purpose of analyzing the behaviour of the methods and this is of course not the practical regime where we want
to use MsFEM approaches. It is
however useful to observe how the various numerical approaches behave in
that regime. 
We consider the same problem as above, with $\eps= 0.3$ instead of 0.03,
and where the meshsize $H$ ranges from $1/8$ to $1/128$, so that indeed
$H$ is smaller (and even much smaller) than $\eps$. The reference
solution is again computed on a mesh of size 
$1024 \times 1024$. As expected, we then observe that all errors
uniformly decrease when $H$ decreases, in contrast to the situation displayed on Figure~\ref{fig:errors} and commented upon above.
We then recover the classical
behavior of numerical approaches in the limit of fine discretizations.

\begin{figure}[htbp]
  \centering
\includegraphics[width=6.7truecm]{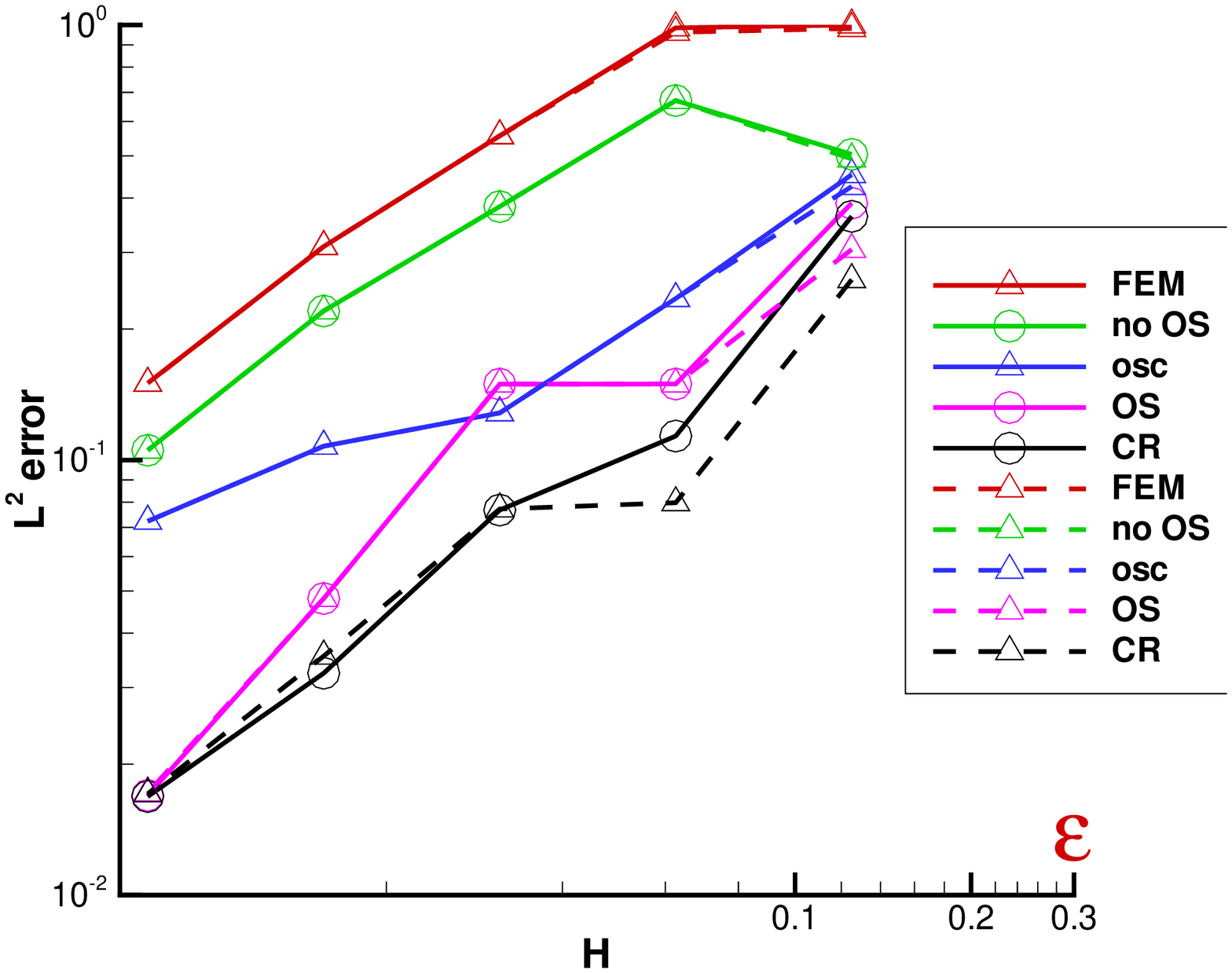}
\includegraphics[width=6.7truecm]{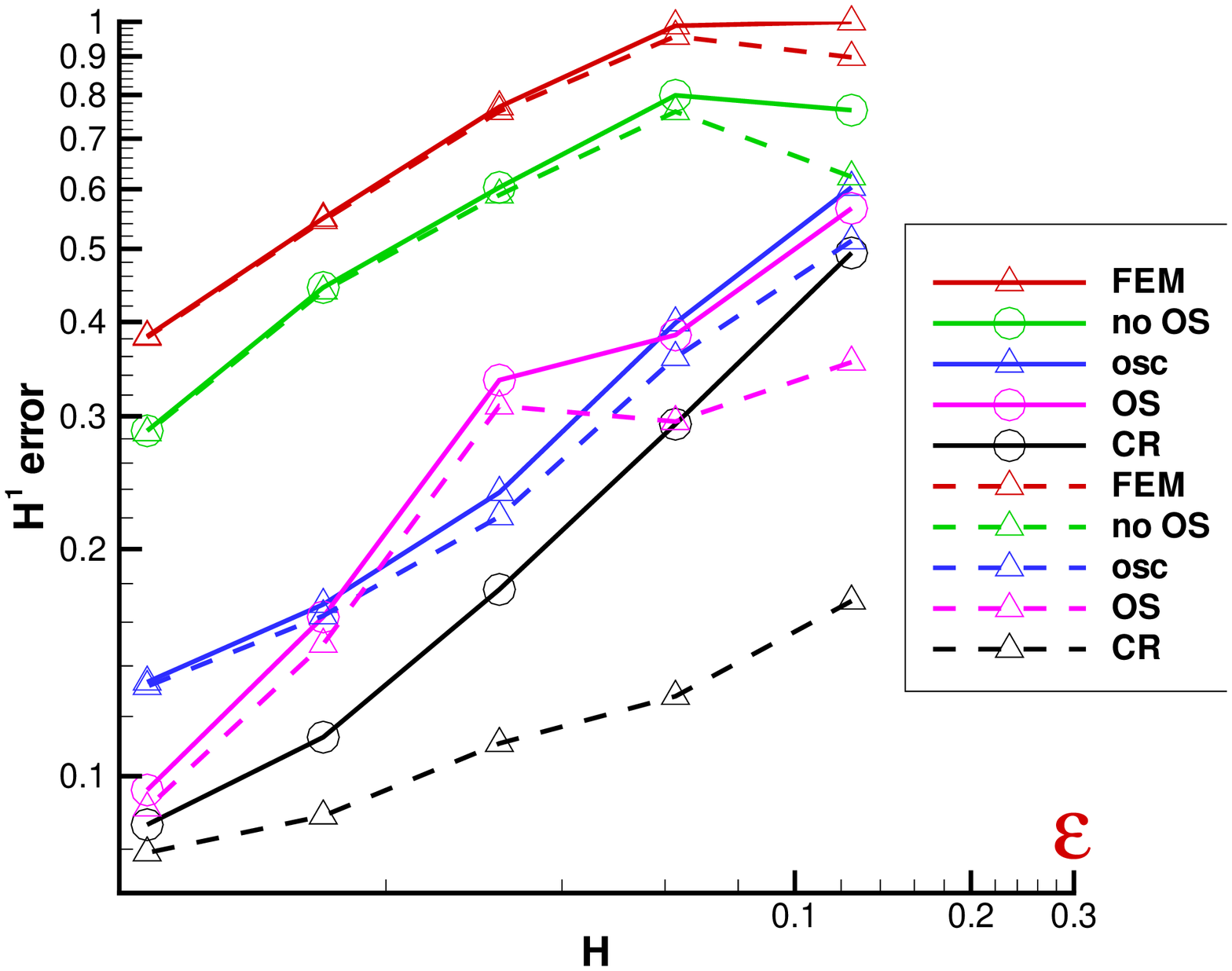}
\caption{Relative ($L^2$, left, and $H^1$-broken, right) errors with
  the same approaches as on Figure~\ref{fig:errors}, in the regime when
  $H \ll \eps$.
\label{fig:errors_regime}}
\end{figure}

\bigskip

For the sake of completeness, we have also considered another oversampling ratio for the MsFEM oversampling approach we compare our approach with. Recall indeed that, on Figures~\ref{fig:errors} and~\ref{fig:errors_regime}, we have considered an oversampling ratio equal to 3. We now additionally consider the method with an oversampling ratio equal to 2. Results are reported on Figure~\ref{fig:over}. As expected, the
accuracy of MsFEM increases when the oversampling ratio increases. The
artificial Dirichlet boundary conditions used to define basis functions
are then further away from the relevant part of the mesh element, and their potentially poor behavior close to the boundary has a smaller influence. Of course, as the 
oversampling ratio increases, the cost of computing these basis
functions increases. We observe that, with the MsFEM approach \`a la
Crouzeix-Raviart we propose, we obtain a better accuracy (again both in $H^1$ and $L^2$ norms) than with
the MsFEM approach that uses an oversampling ratio of 3 (i.e., that
computes basis functions by solving local problems on quadrangles of
size $3H \times 3H$). 

\begin{figure}[htbp]
  \centering
\includegraphics[width=6.7truecm]{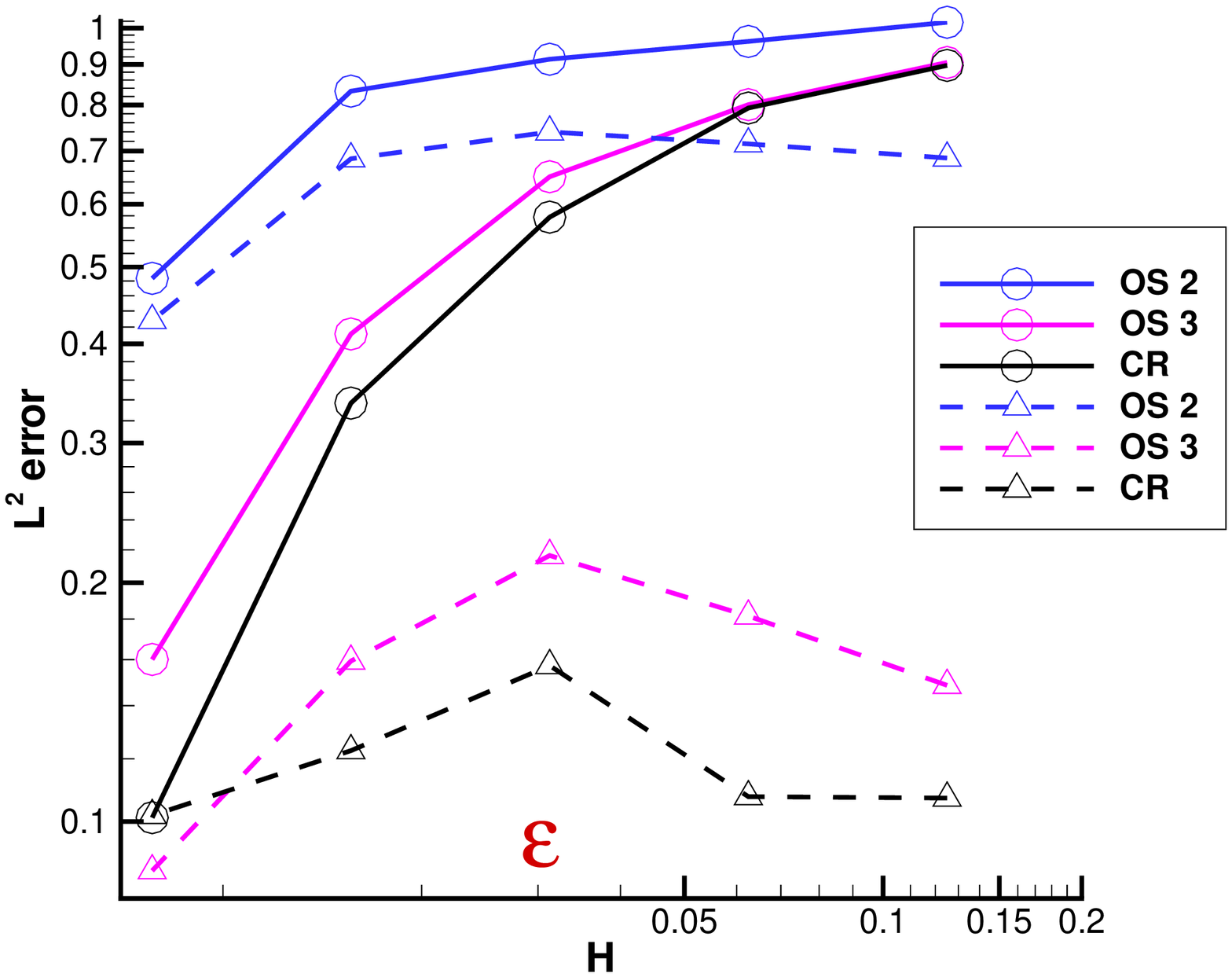}
\includegraphics[width=6.7truecm]{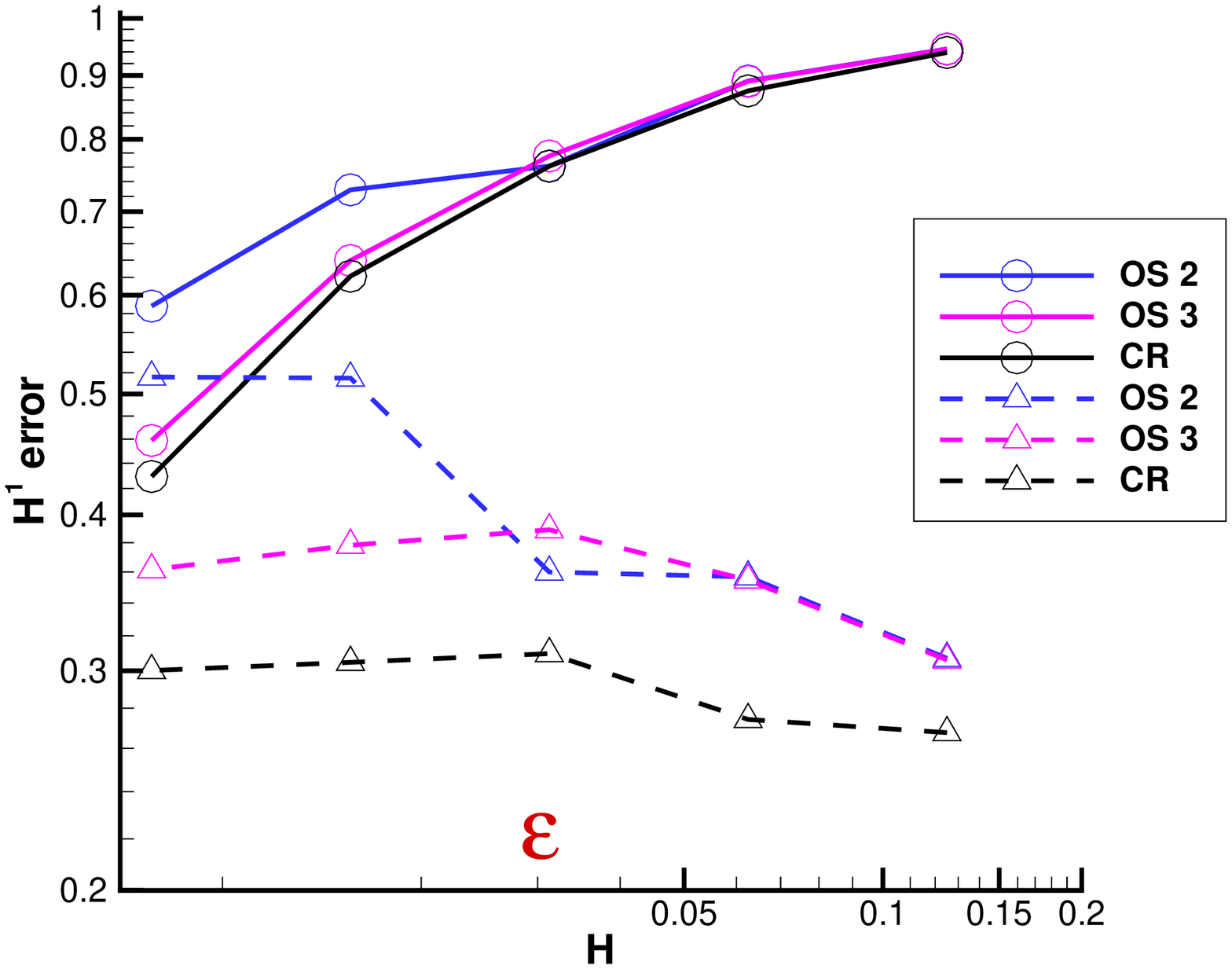}
\caption{Relative ($L^2$, left, and $H^1$-broken, right) errors with
  various approaches (dashed lines: using bubble functions; solid lines:
  without bubble functions): 
OS -- MsFEM with various oversampling ratios, CR -- the MsFEM approach \`a la
Crouzeix-Raviart we propose. 
\label{fig:over}}
\end{figure}

\begin{remark}
Figures~\ref{fig:errors},~\ref{fig:errors_regime} and~\ref{fig:over} show that, for any of the numerical approaches we have considered, the relative $L^2$ error is always smaller than the relative $H^1$ error. The former presumably converges with a better rate (in terms of $\eps$ and $H$) than the latter, although establishing sharp $L^2$ error estimates for MsFEM-type approaches is quite involved (see e.g.~\cite{hou1999}).
\end{remark}

\subsection{Robustness with respect to the location of the perforations}
\label{sec:num-rob}

In this section and in the following one, we perform a series of tests with a different, specific purpose. As
a major motivation for advocating our approach is the flexibility of
Crouzeix-Raviart type finite elements in terms of boundary conditions,
we expect our approach to be particularly effective (and therefore
considerably superior to other approaches) when some edges of
the mesh happen to intersect perforations of the domain. The more such
intersections, the more important the difference. In order to check this expected behaviour, we design the
following test. 

We solve~\eqref{eq:genP} on the domain $\Omega=(0,1)^2$, with a constant
right-hand side~$f=1$, and we take $B_\varepsilon$ the
set of discs of radius $0.2\varepsilon$ periodically located on the
regular grid of period $\varepsilon =0.1$. We compute the reference
solution, and consider 3 variants of MsFEM: the linear
version, the oversampling version and the Crouzeix-Raviart version.
The last three approaches are implemented in the variant that includes
bubble functions in the basis set and they are run on a mesh of size $H=0.2$.

We now perform two sets of numerical experiments. They are identical
except for what concerns the relative position of the mesh with the
perforations. The difference between the two sets of tests is that, from
one set of tests to the other one, the perforations are shifted by
$\varepsilon/2$ in the directions $x$ and $y$. In our Test~1, no edge
intersects any perforation, while, on our Test~2, many edges actually
intersect perforations. 
To some extent, the situation of Test 1 is the best case scenario (where as few edges as possible intersect the perforations) and the other situation is the worst case scenario. 

The numerical  solutions computed for each of the situations considered
is shown on Figures~\ref{fig:decalage-test1}
and~\ref{fig:decalage-test2}, for Test~1 and Test~2 respectively.
The numerical errors observed, computed both in $L^2$ and $H^1$-broken
norms, are correspondingly displayed on Tables~\ref{table:test1}
and~\ref{table:test2} respectively. More than the actual values obtained
for each case, this is the trend of difference between Table~\ref{table:test1}
and Table~\ref{table:test2} that is the practically relevant feature. A
comparison between the two tables indeed show that, qualitatively and in
either of the norms used for measuring the error, the
linear version and the oversampling version of MsFEM are both much more
sensitive to edges intersecting perforations than the Crouzeix-Raviart
version of MsFEM. 
In particular, the gain of our approach with respect to the linear version of MsFEM is much higher in our Test 2 (which is, from the geometrical viewpoint, the worst case scenario) than in Test 1. 
This confirms the intuition of a better flexibility of
our approach. This also allows for expecting a much better
behaviour of that approach for nonperiodic multiscale perforated
problems for which it is extremely difficult, practically, to avoid
repeated intersections of perforations with mesh edges. This is
confirmed by our numerical experiments of Section~\ref{sec:non-per}.

\begin{figure}[htbp]
  \centering
\includegraphics[width=6.7truecm]{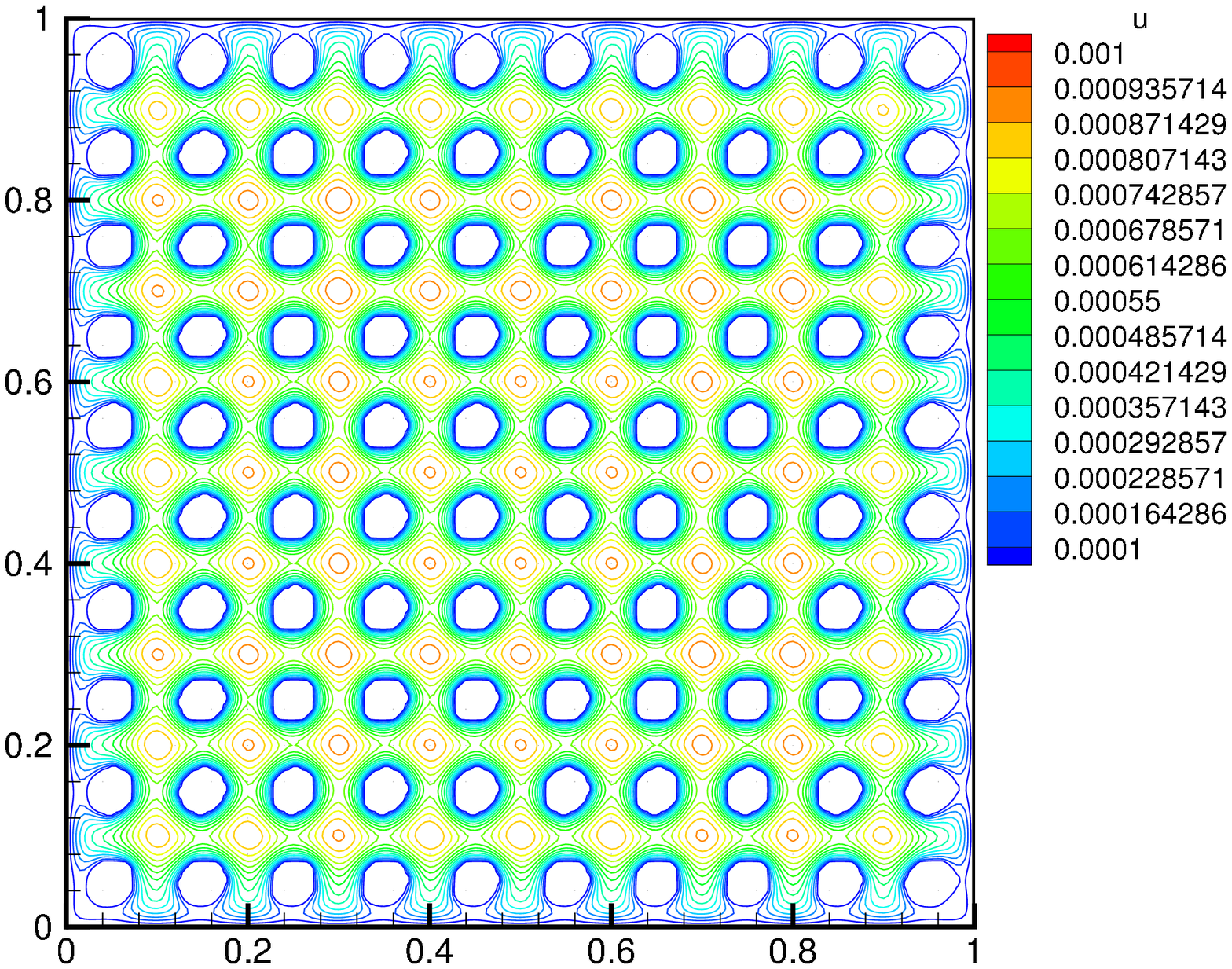}
\includegraphics[width=6.7truecm]{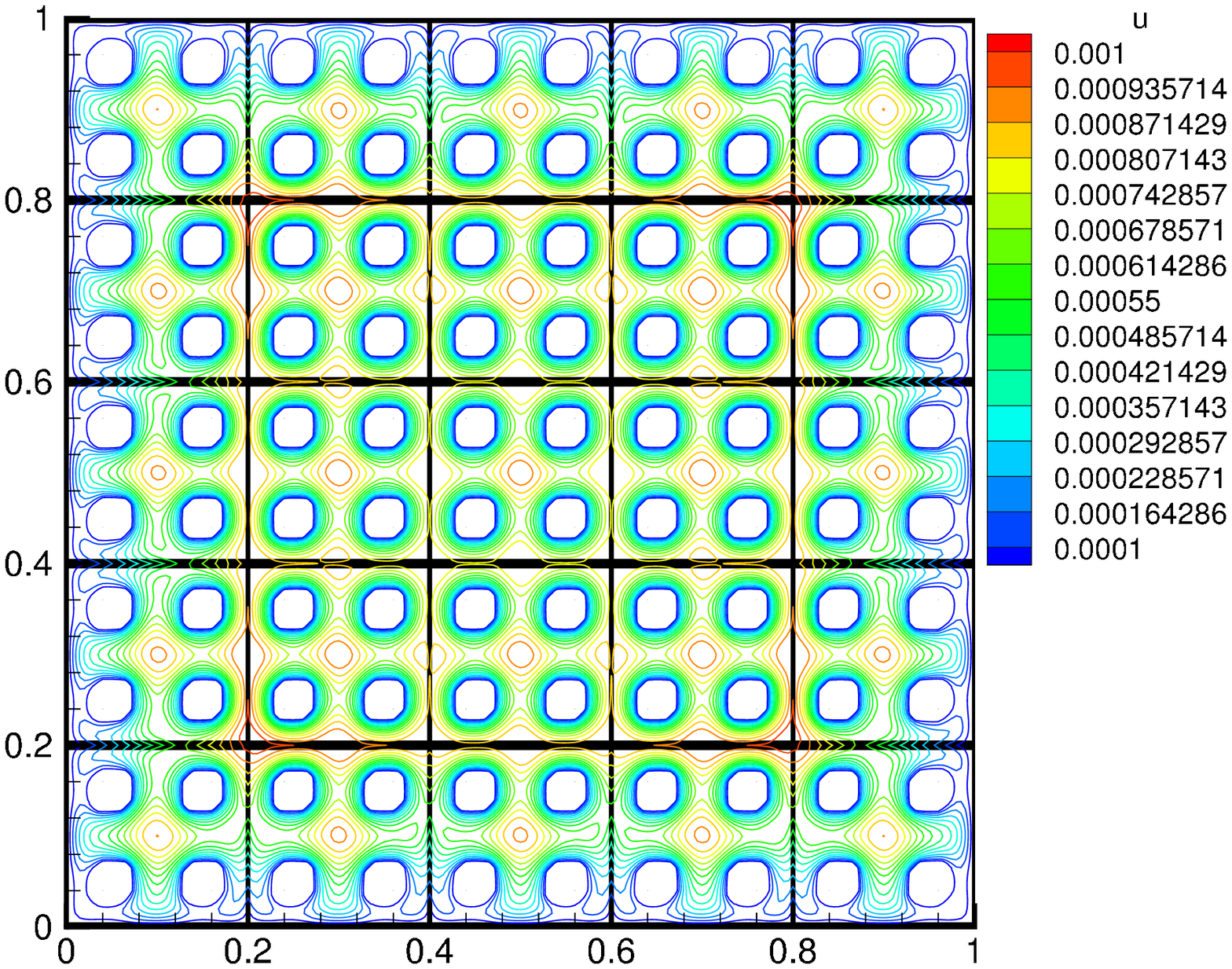}
\includegraphics[width=6.7truecm]{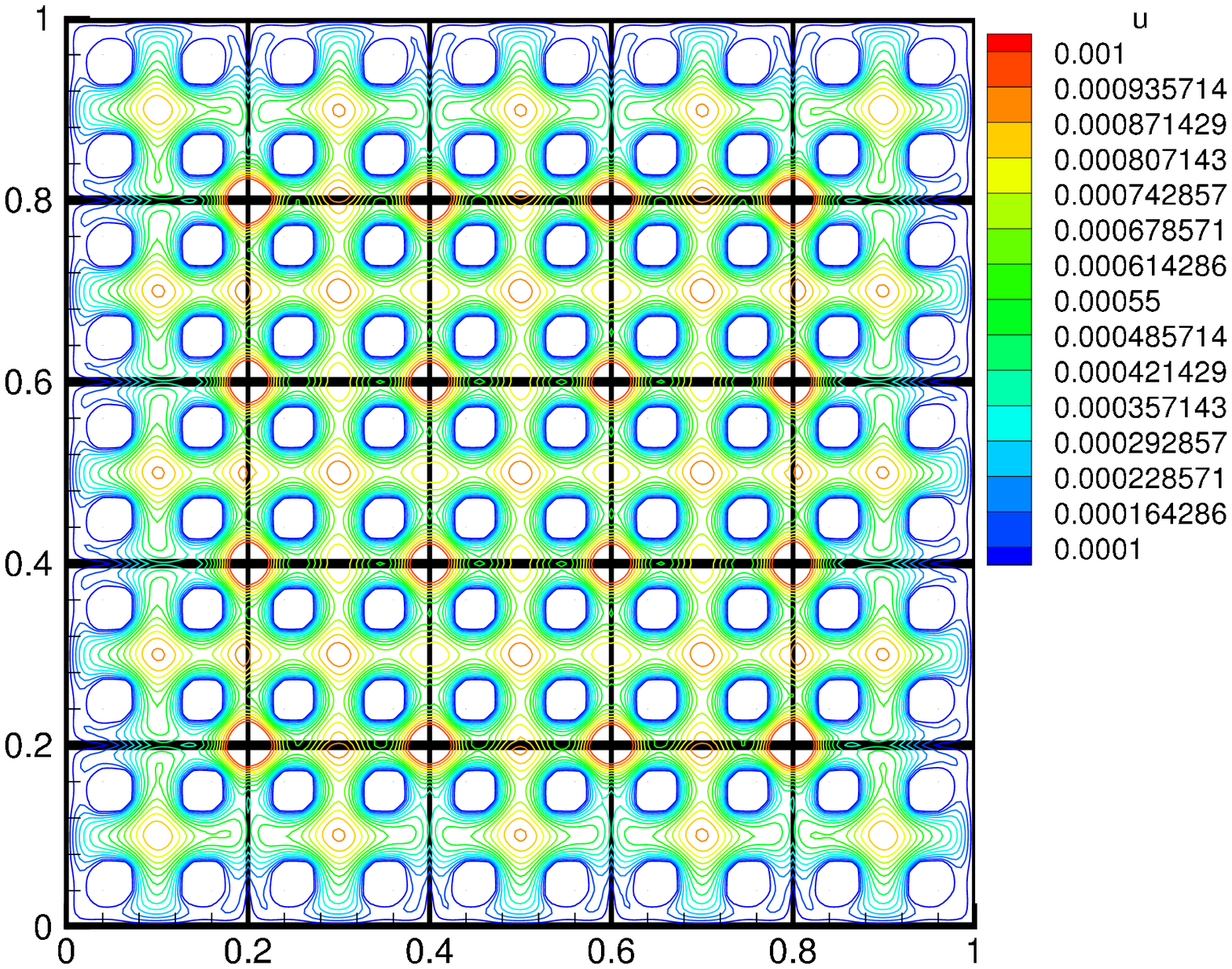}
\includegraphics[width=6.7truecm]{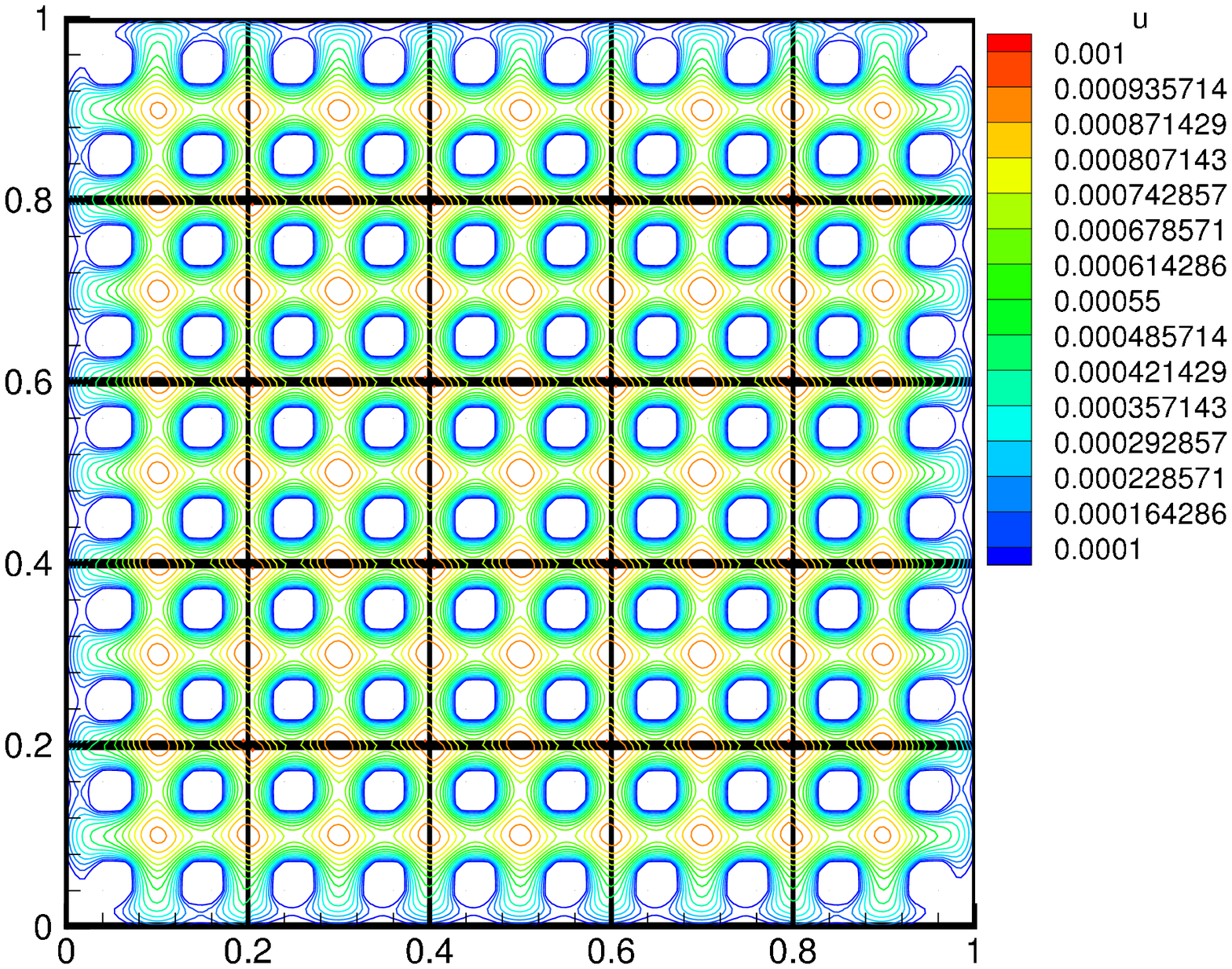}
\caption{(Test 1) Left to right and top to bottom: Reference solution
  (on the mesh $200 \times 200$), MsFEM with linear boundary conditions, MsFEM
  with oversampling (where the size of the quadrangles used to compute
  the basis functions is $3H \times 3H$), proposed MsFEM \`a la
  Crouzeix-Raviart. 
\label{fig:decalage-test1}}
\end{figure}

\begin{table}[h!]
\centering{
\begin{tabular}{l|c|c}
& $L^2$ error (\%) & $H^1$ error (\%) \\ \hline
MsFEM with linear conditions & 16 & 32 \\ \hline
MsFEM with oversampling & 20 & 38 \\ \hline
MsFEM \`a la Crouzeix-Raviart & 9 & 24%
\end{tabular}
}
\caption{Numerical relative errors for Test 1
\label{table:test1}}
\end{table}

\begin{figure}[htbp]
  \centering
\includegraphics[width=6.7truecm]{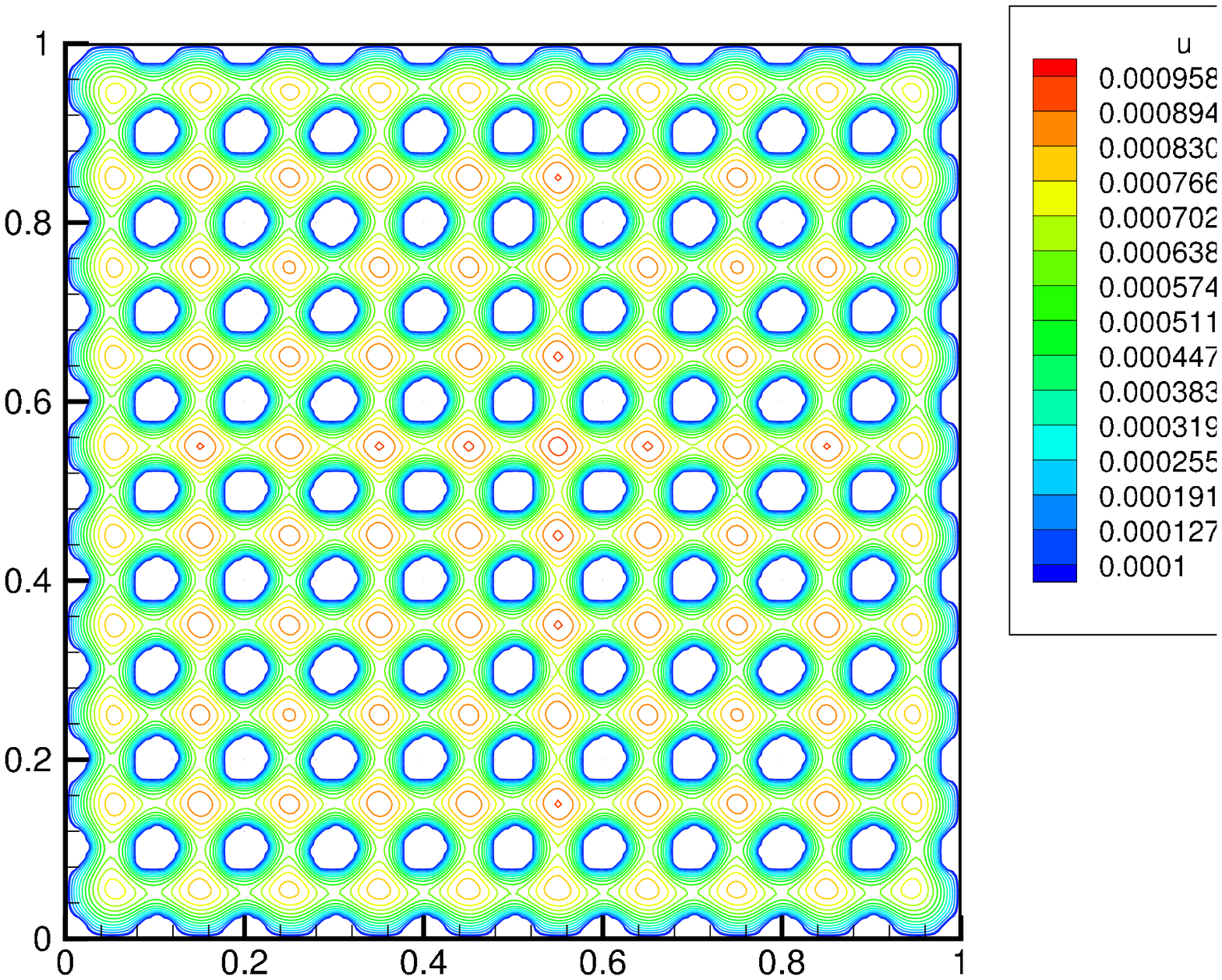}
\includegraphics[width=6.7truecm]{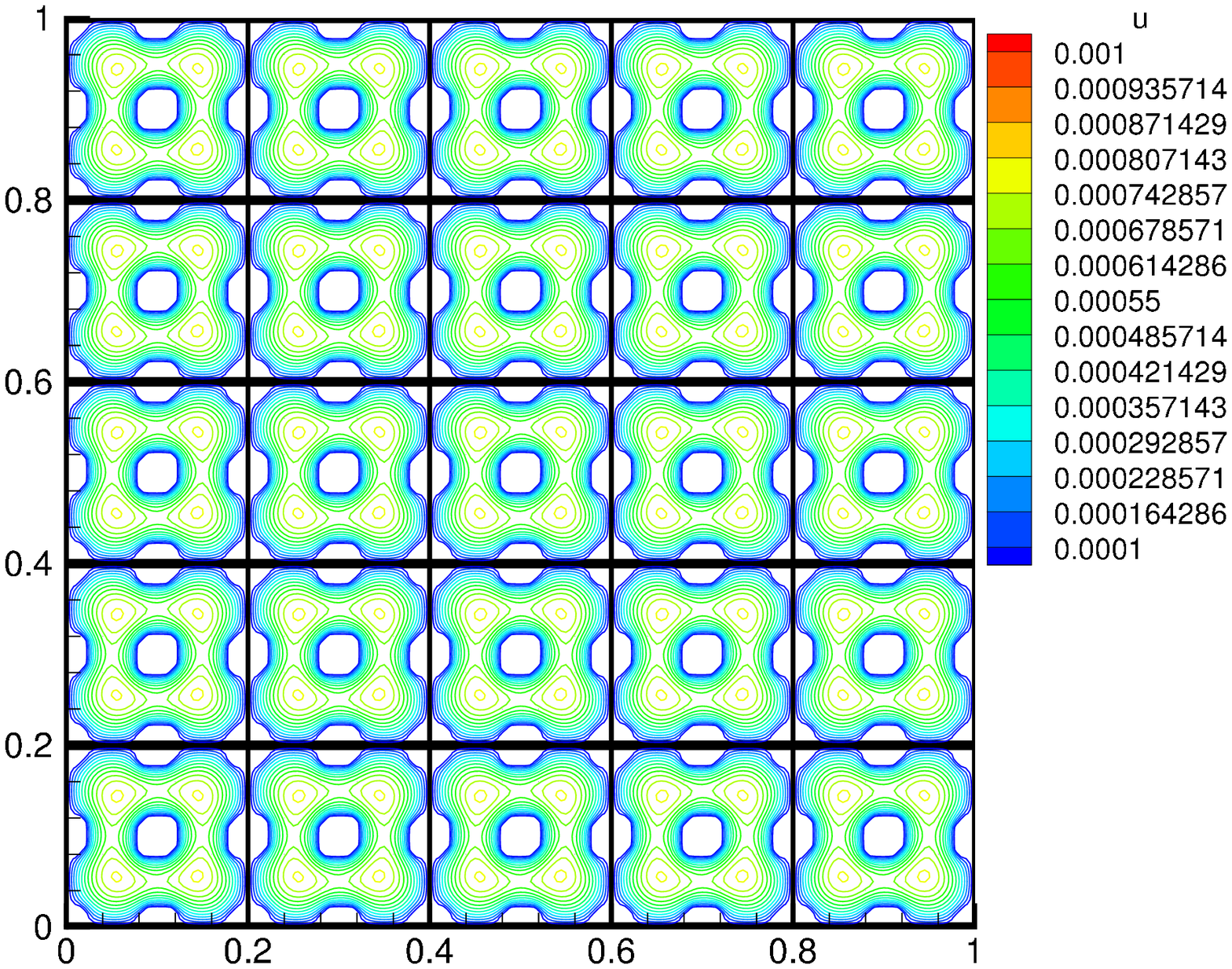}
\includegraphics[width=6.7truecm]{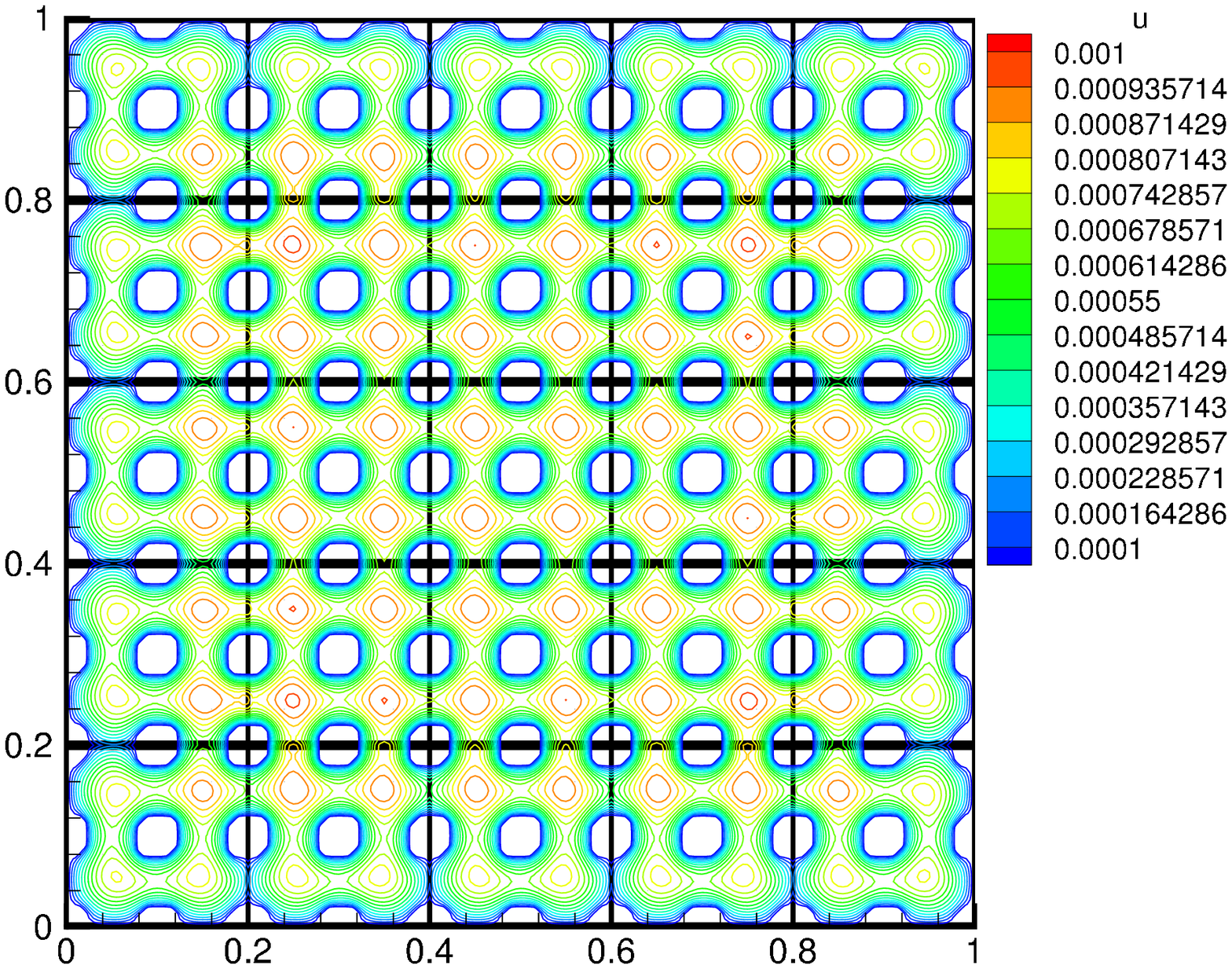}
\includegraphics[width=6.7truecm]{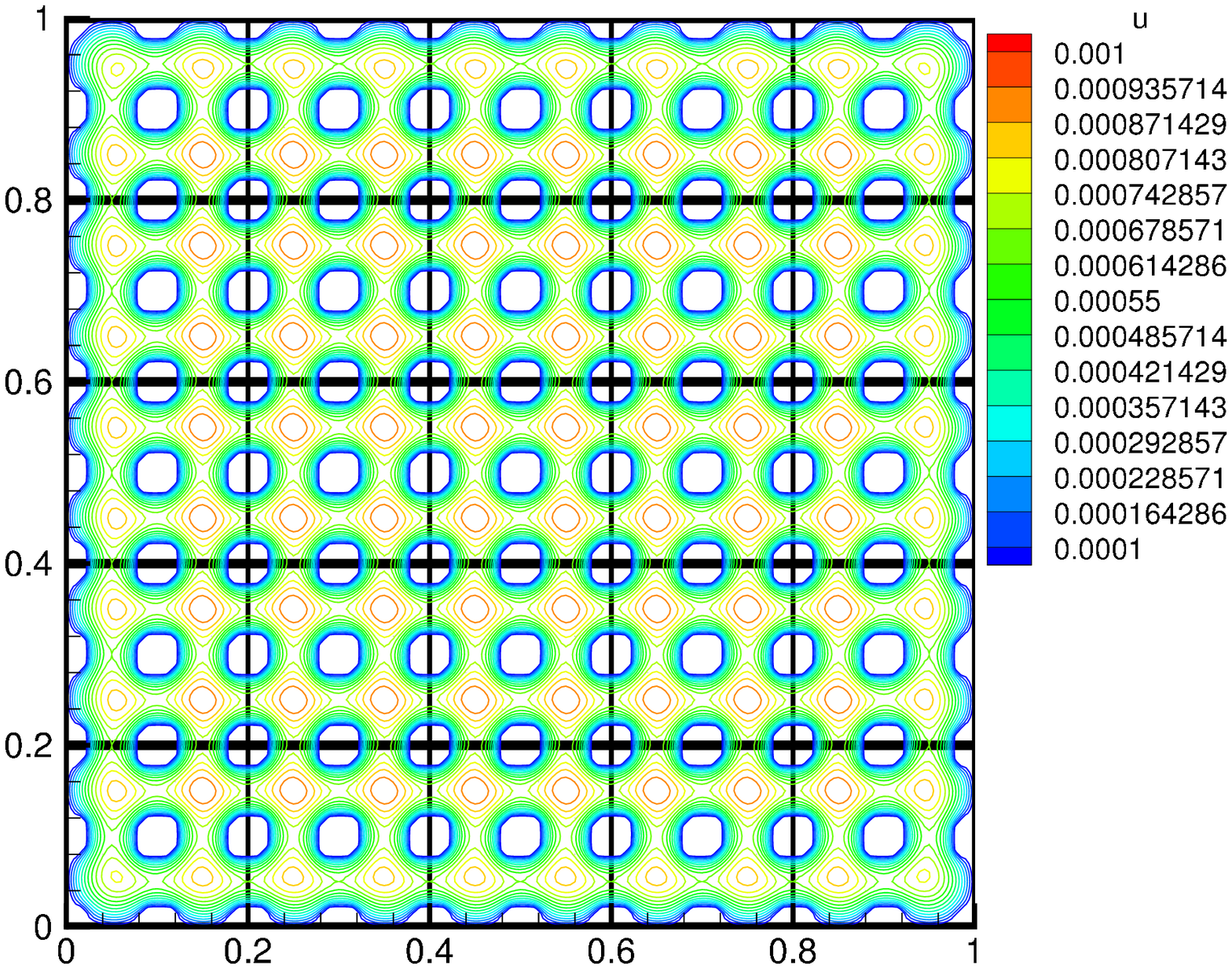}
\caption{(Test 2) Left to right and top to bottom: Reference solution
  (on the mesh $200 \times 200$), MsFEM with linear boundary conditions,
  MsFEM with oversampling (where the size of the quadrangles used to compute
  the basis functions is $3H \times 3H$), proposed MsFEM \`a la Crouzeix-Raviart.
\label{fig:decalage-test2}}
\end{figure}

\begin{table}[h!]
\centering{
\begin{tabular}{l|c|c}
& $L^2$ error (\%) & $H^1$ error (\%) \\ \hline
MsFEM with linear conditions & 28 & 52 \\ \hline
MsFEM with oversampling & 12 & 31 \\ \hline
MsFEM \`a la Crouzeix-Raviart & 9 & 27%
\end{tabular}}
\caption{Numerical relative errors for Test 2
\label{table:test2}}
\end{table}

\subsection{A test on a non-periodic geometry of perforations}
\label{sec:non-per}

A major motivation for using MsFEM approaches is to address
non-periodic cases, for which homogenization theory does not provide any
explicit approximation procedure. We have tested several such examples, two of them
being shown on Figure~\ref{fig:ex-non-per}. For each of them, the domain
$\Omega = (0,1)^2$ is meshed using quadrangles of size $H$, with $1/128
\leq H \leq 1/8$. The reference
solution is again computed on a mesh of size 
$1024 \times 1024$.

\begin{figure}[htbp]
  \centering
\includegraphics[width=6.7truecm]{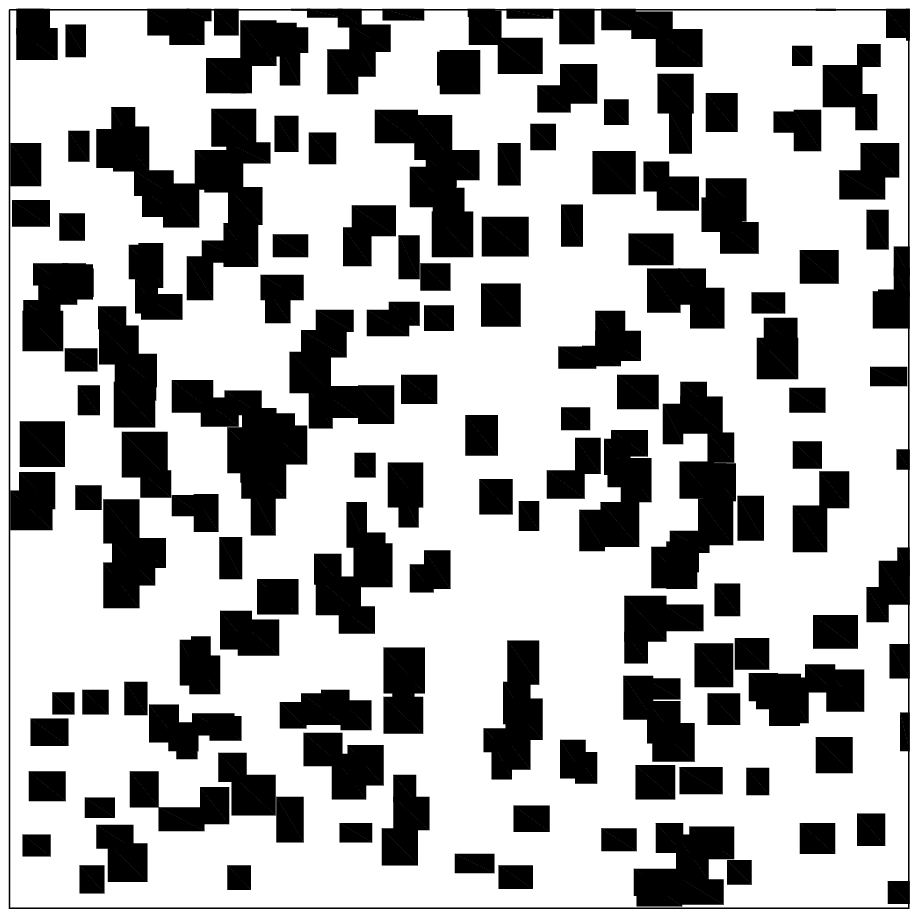}
\includegraphics[width=6.7truecm]{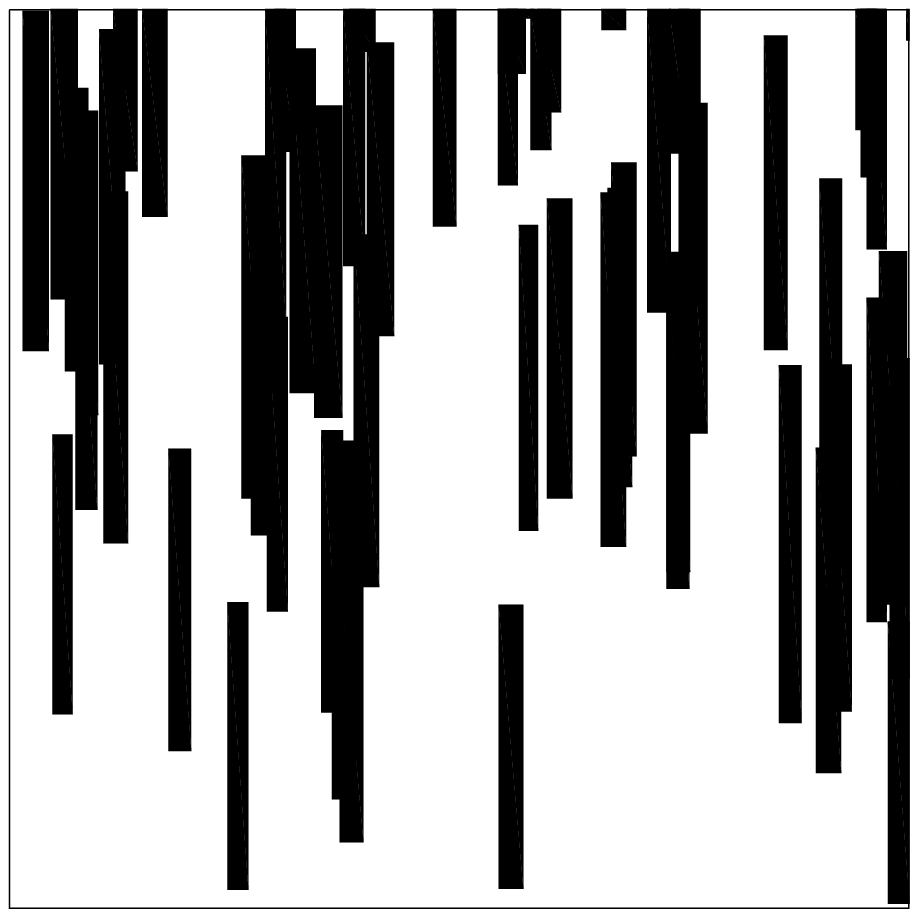}
\caption{Two examples of domains with non-periodic perforations (represented in black). Perforations have a rectangular shape, with a center randomly located in $\Omega=(0,1)^2$ according to the uniform distribution. Left: perforations are made from 100 rectangles, the width and height of which are uniformly distributed between 0.02 and 0.05. Right: perforations are made from 60 rectangles, the width (resp. the height) of which is uniformly distributed between 0.02 and 0.04 (resp. 0.02 and 0.4).
\label{fig:ex-non-per}}
\end{figure}

\medskip

Errors are shown on Figure~\ref{fig:errors-non-per3} (resp. Figure~\ref{fig:errors-non-per5}) for the test-case shown on the left (resp. right) part of Figure~\ref{fig:ex-non-per} (we have obtained similar results for several other test cases not shown here for the sake of brevity). We again see that 
our approach provides results at least as accurate as, and often more
accurate than the MsFEM approach with oversampling on quadrangles of
size $3H \times 3H$. Our approach outperforms all the other variants of
MsFEM that we have tested. These results confirm the definite interest
of the variant we introduce in this article.

\begin{figure}[htbp]
  \centering
\includegraphics[width=6.7truecm]{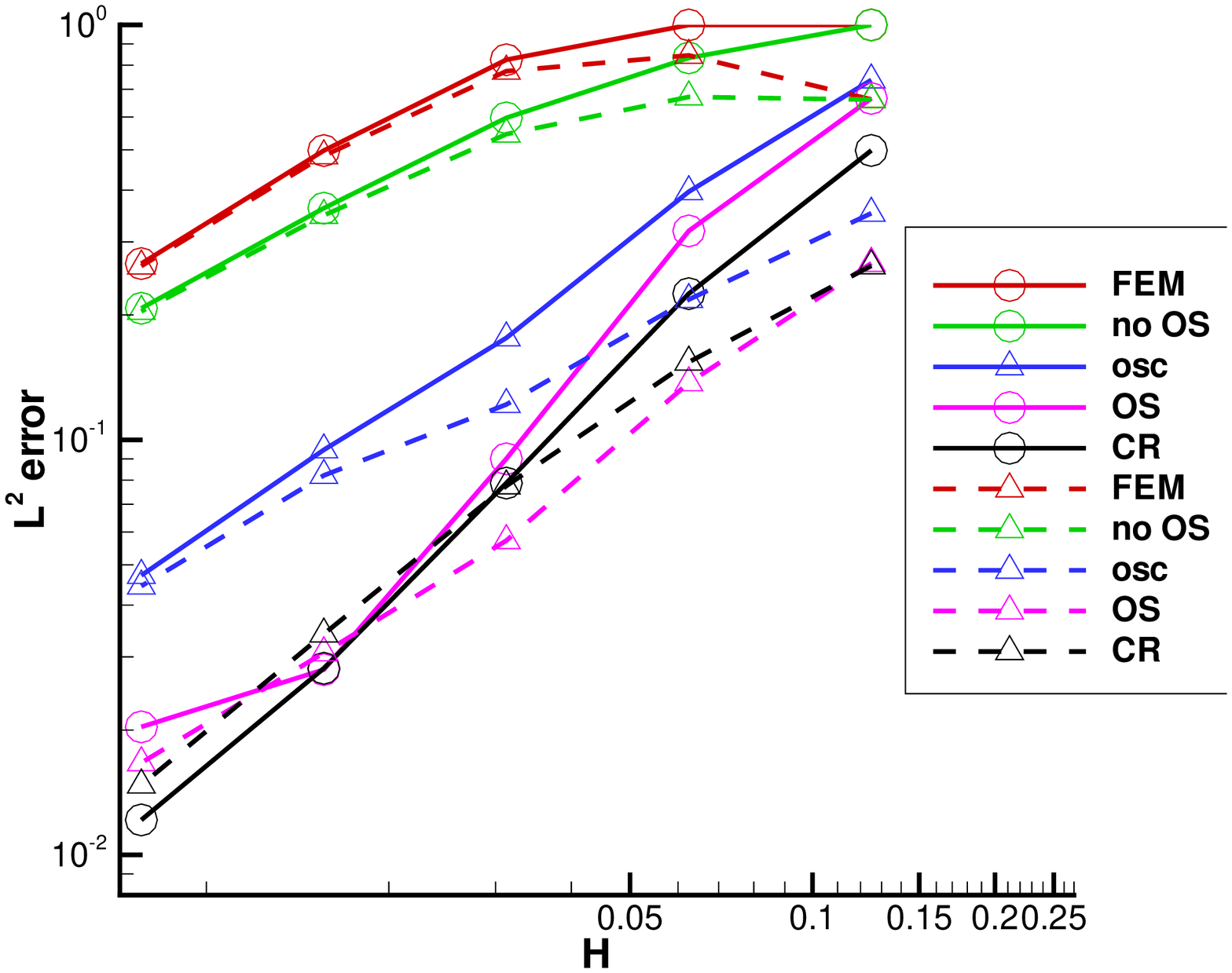}
\includegraphics[width=6.7truecm]{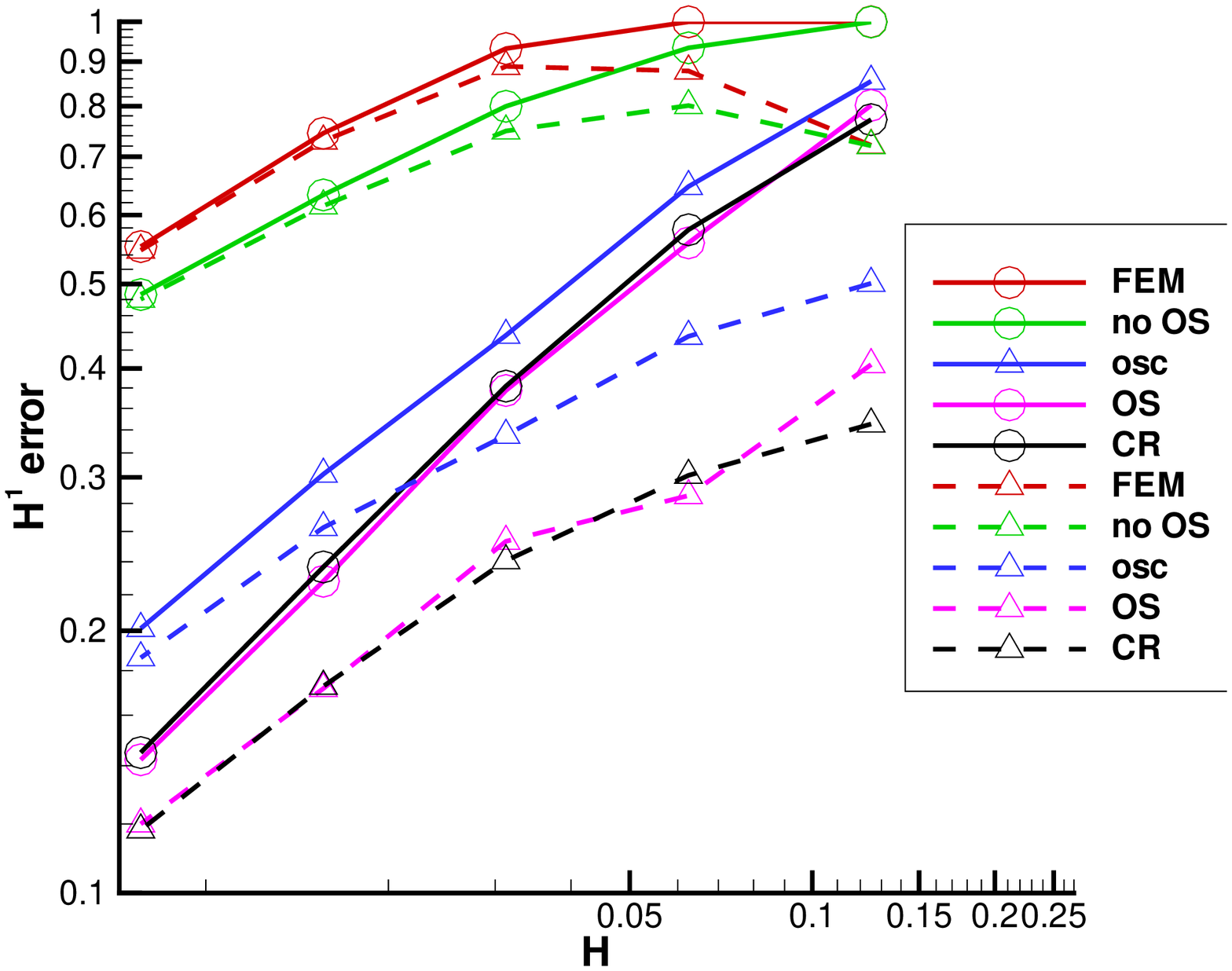}
\caption{Relative ($L^2$, left, and $H^1$-broken, right) errors with
  the same approaches as on Figure~\ref{fig:errors} for the test-case shown on the left-part of Figure~\ref{fig:ex-non-per}.
\label{fig:errors-non-per3}}
\end{figure}

\begin{figure}[htbp]
  \centering
\includegraphics[width=6.7truecm]{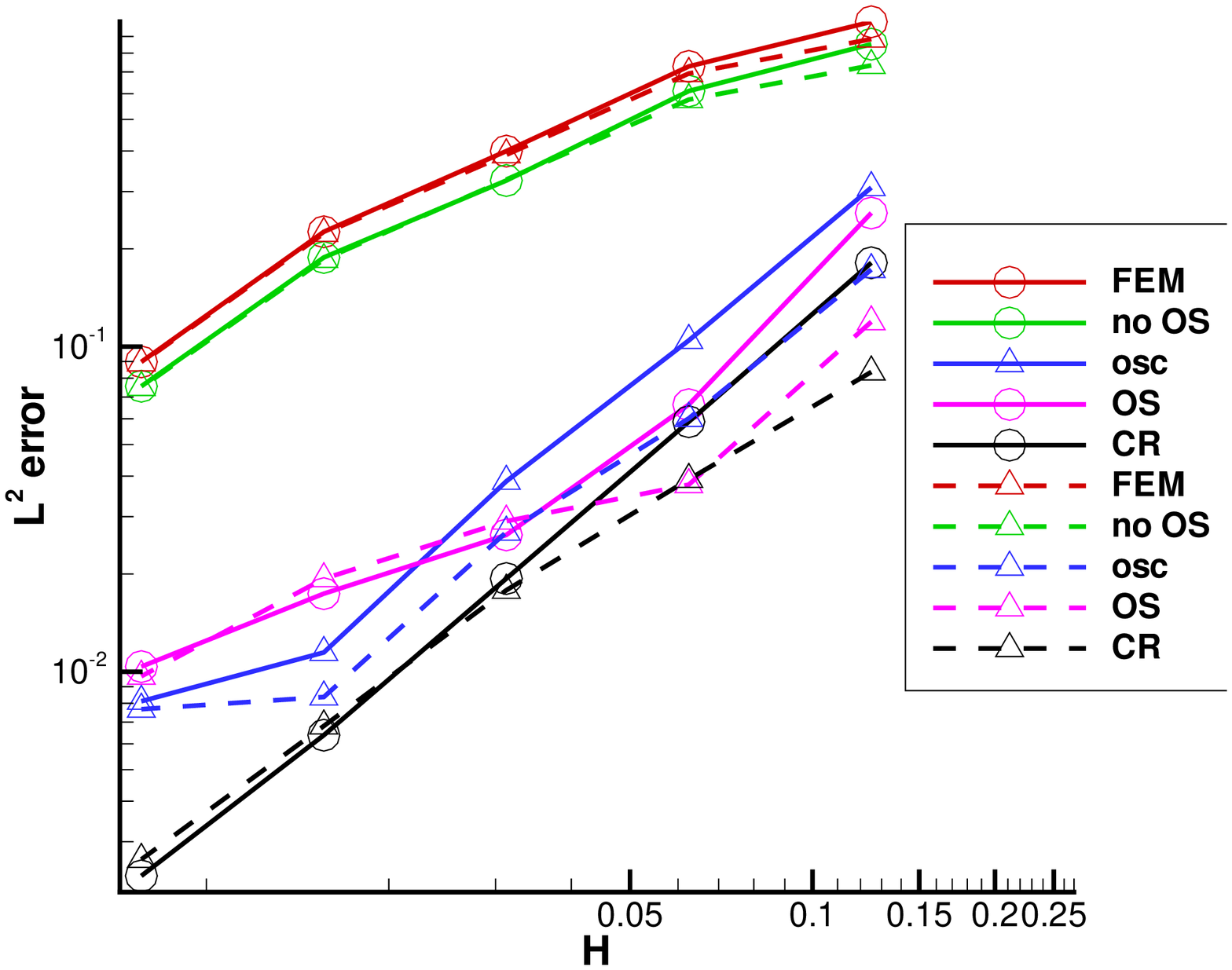}
\includegraphics[width=6.7truecm]{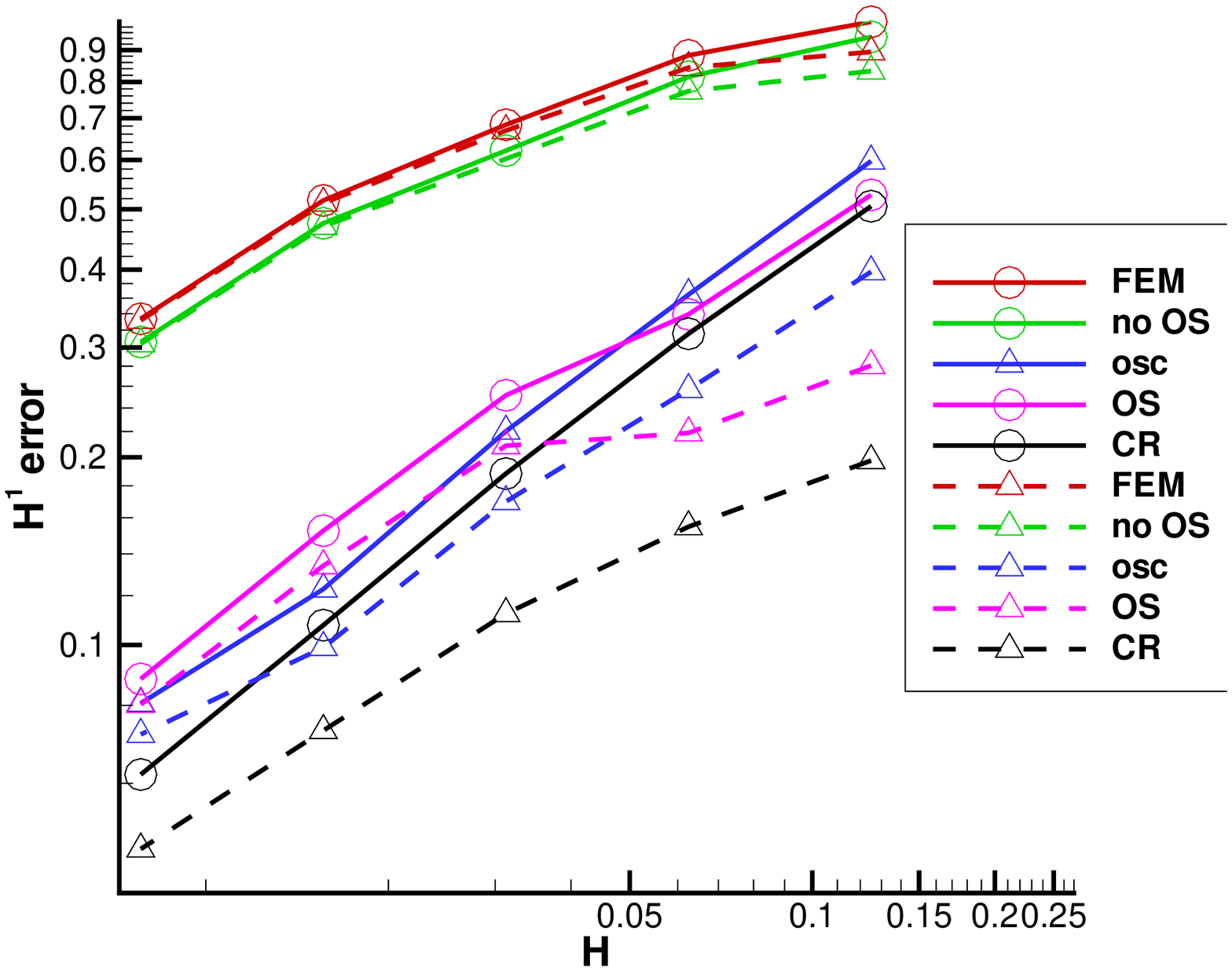}
\caption{Relative ($L^2$, left, and $H^1$-broken, right) errors with
  the same approaches as on Figure~\ref{fig:errors} for the test-case shown on the right-part of Figure~\ref{fig:ex-non-per}.
\label{fig:errors-non-per5}}
\end{figure}

\bigskip

\noindent{\bf Acknowledgments.} The work of the first two authors is
partially supported by ONR under Grant N00014-12-1-0383 and by EOARD under Grant FA8655-13-1-3061. The third author acknowledges the hospitality of INRIA. 
We thank William Minvielle for his remarks on a preliminary version of this article.

\appendix

\section{Technical proofs}

We collect in this Appendix the proof of two technical results used in Section~\ref{sec:proof}, namely the Poincar\'e inequality~\eqref{eq:poincare_perfore} and the homogenization result~\eqref{eq:lions-general}.

\subsection{The Poincar\'e inequality in perforated domains}
\label{sec:proof_poincare}

Consider the unit square $Y = (0,1)^d$ in dimension $d$, and some smooth perforation $B \subset Y$. There exists a constant $\mathcal{C}>0$ such that, for any $\phi \in H^1(Y \setminus B)$ with $\phi = 0$ on $\partial B$, we have
\begin{equation}
\label{eq:poinc_base}
\| \phi \|_{L^2(Y \setminus B)} \leq \mathcal{C} \| \nabla \phi \|_{L^2(Y \setminus B)}.
\end{equation}
Let $Y_k^B := k + (Y \setminus B)$ be the perforated unit cell after translation by the vector $k \in \ZZ^d$. We scale $Y_k^B$ by a factor $\eps$ and repeat this pattern periodically (with a period $\eps$ in all directions) for a finite number of times. We hence introduce 
\begin{equation}
\label{eq:def_K}
Q_\eps = \underset{k \in K}{\cup} \left( \eps Y_k^B \right),
\quad
K = \left\{ k \in \ZZ^d, \ a^-_i \leq k_i \leq a^+_i \ \text{for any $1 \leq i \leq d$} \right\}
\end{equation}
for some $a^-_i$ and $a^+_i$ in $\ZZ$, that we can also write as
$$
Q_\eps = R_\eps \setminus P_\eps,
$$ 
where $R_\eps$ is the quadrangle $R_\eps = \underset{k \in K}{\cup} \left( \eps (k+Y) \right)$ and $P_\eps$ is the set of perforations $P_\eps = \underset{k \in K}{\cup} \left( \eps (k+B) \right)$. Summing the inequality~\eqref{eq:poinc_base} for all cells and next scaling the geometry, we obtain that, for any $\phi \in H^1(R_\eps \setminus P_\eps)$ with $\phi = 0$ on $\partial P_\eps$, we have
\begin{equation}
\label{eq:poinc_base2}
\| \phi \|_{L^2(R_\eps \setminus P_\eps)} \leq \mathcal{C} \eps \| \nabla \phi \|_{L^2(R_\eps \setminus P_\eps)}
\end{equation}
where $\mathcal{C}$ is the same constant as in~\eqref{eq:poinc_base}.

Consider now $\phi \in H^1_0(\Omega_\eps)$. There exists a set $K$ of the form~\eqref{eq:def_K} such that $\Omega_\eps \subset Q_\eps$ (it is sufficient to include $\Omega_\eps$ into a sufficiently large perforated quadrangle). We now introduce $\overline{\phi}$, defined on $Q_\eps$ by
$$
\overline{\phi} = \phi \text{ on $\Omega_\eps$}, 
\quad
\overline{\phi} = 0 \text{ otherwise},
$$
and readily see that $\overline{\phi} \in H^1(Q_\eps)$ and $\overline{\phi} = 0$ on $\partial P_\eps$. The function $\overline{\phi}$ thus satisfies~\eqref{eq:poinc_base2}. We hence obtain
$$
\| \phi \|_{L^2(\Omega_\eps)}
=
\left\| \overline{\phi} \right\|_{L^2(R_\eps \setminus P_\eps)} 
\leq 
\mathcal{C} \eps \left\| \nabla \overline{\phi} \right\|_{L^2(R_\eps \setminus P_\eps)}
=
\mathcal{C} \eps \| \nabla \phi \|_{L^2(\Omega_\eps)}.
$$
This completes the proof of~\eqref{eq:poincare_perfore}.

\subsection{Homogenization result}
\label{sec:proof_hom}

In this section, we prove~\eqref{eq:lions-general}. To do so, we actually do not use~\eqref{eq:lions}. The proof below actually provides an alternative proof of~\eqref{eq:lions} (see Remark~\ref{rem:lions} below).

\smallskip

Let
$\eta^\varepsilon$ be a smooth function on $\overline{\Omega}$ that
vanishes on $\partial \Omega$, satisfies $0 \leq \eta^\varepsilon(x)
\leq 1$ on $\overline{\Omega}$ and is equal to 1 in 
$\omega_\varepsilon = \{ x\in \Omega \text{ s.t. } 
\mbox{dist}(x,\partial \Omega) > \varepsilon \}$. Using the fact that $\Omega$ is smooth, it is easy to see that
such a function can be constructed 
for each~$\varepsilon >0$ and we can suppose that it satisfies
\begin{multline}
\| \eta^\varepsilon \|_{L^\infty(\Omega)} \leq C,
\quad 
\| 1- \eta^\varepsilon \|_{L^2(\Omega)} \leq C \sqrt{\varepsilon},
\\
\left\| \nabla \eta^\varepsilon \right\|_{L^\infty(\Omega)} \leq 
\frac{C}{\varepsilon},
\quad 
\left\| \nabla \eta^\varepsilon \right\|_{L^2(\Omega)} \leq 
\frac{C}{\sqrt{\varepsilon}},
\quad 
\left\| \nabla^2 \eta^\varepsilon \right\|_{L^2(\Omega)} \leq 
\frac{C}{\varepsilon^{3/2}}
\label{eq:pty_eta}
\end{multline}
for some universal constant $C>0$. Set 
$\phi = u^\varepsilon -\varepsilon^2 \, w_\varepsilon \, f \, 
\eta^\varepsilon$, where $w_\eps(x) = w(x/\eps)$, with $w$ the solution to~\eqref{eq:corrector-lions}. We compute
\begin{eqnarray*}
-\Delta \phi  
&=&
f + \varepsilon^2 \Delta (w_\varepsilon \, f \, \eta^\varepsilon)
\\
&=&
f + (\Delta w)_\varepsilon \, f \, \eta^\varepsilon 
+ 2 \varepsilon (\nabla w)_\varepsilon \cdot \nabla (f \,
\eta^\varepsilon)
+
\varepsilon^2 w_\varepsilon \, \Delta (f \, \eta^\varepsilon) 
\\
&=&
f (1-\eta^\varepsilon)
+
2 \varepsilon (\nabla w)_\varepsilon \cdot \nabla (f \,
\eta^\varepsilon)
+
\varepsilon^2 w_\varepsilon \, \Delta (f \, \eta^\varepsilon)
\end{eqnarray*}
on $\Omega_\varepsilon$, where we have used~\eqref{eq:genP} in the first
line and the fact that $-\Delta w=1$ on $Y \setminus B$ in the last
line. Using the regularity~\eqref{eq:regul_w} of $w$ and the properties~\eqref{eq:pty_eta}
of $\eta^\varepsilon$, we deduce that
\begin{eqnarray}
&&
\| -\Delta \phi \|_{L^2(\Omega_\eps)}
\nonumber
\\
& \leq &
\| f \|_{L^\infty(\Omega)} \, \| 1- \eta^\varepsilon \|_{L^2(\Omega)} 
\nonumber
\\
&&+
2 \eps \| \nabla w \|_{L^\infty} \left( 
\| f \|_{L^\infty(\Omega)} \, \| \nabla \eta^\varepsilon \|_{L^2(\Omega)} 
+
\| \nabla f \|_{L^2(\Omega)} \, \| \eta^\varepsilon \|_{L^\infty(\Omega)} 
\right)
\nonumber
\\
&&+ 
\eps^2 \| w \|_{L^\infty} \Big( 
\| f \|_{L^\infty(\Omega)} \, \| \Delta \eta^\varepsilon \|_{L^2(\Omega)} 
+
2 \| \nabla f \|_{L^2(\Omega)} \, \| \nabla \eta^\varepsilon \|_{L^\infty(\Omega)} 
\nonumber
\\
&& \qquad \qquad \qquad \qquad +
\| \Delta f \|_{L^2(\Omega)} \, \| \eta^\varepsilon \|_{L^\infty(\Omega)} 
\Big)
\nonumber
\\
& \leq & 
C \sqrt{\varepsilon} \, {\cal N}(f),
\label{eq:jll}
\end{eqnarray}
where ${\cal N}(f)$ is defined by~\eqref{eq:def_N_f}.

We now notice that $u^\eps$ and $w_\eps \eta^\eps$ vanish on $\partial
\Omega_\eps$, hence $\phi = 0$ on $\partial \Omega_\eps$.
An integration by parts thus yields
\begin{equation}
\label{eq:avant}
\int_{\Omega_\varepsilon} |\nabla \phi |^2
=
\int_{\Omega_\varepsilon} (-\Delta \phi ) \, \phi 
\leq 
C \sqrt{\varepsilon} \, {\cal N}(f) \, 
\|\phi \|_{L^2(\Omega_\varepsilon)}.
\end{equation}
Inserting~\eqref{eq:poincare_perfore} in~\eqref{eq:avant}, we obtain
$| \phi |_{H^1(\Omega_\eps)} \leq C \varepsilon^{3/2} \, 
{\cal N}(f)$. We conclude by using the triangle inequality
$$
\left| u^\varepsilon - 
\varepsilon^2 w \left( \frac{\cdot}{\varepsilon} \right) 
f \right|_{H^1(\Omega_\eps)}
\leq
| \phi |_{H^1(\Omega_\eps)} + \eps^2 \left| 
w \left( \frac{\cdot}{\varepsilon} \right) f (1-\eta^\eps) 
\right|_{H^1(\Omega_\eps)},
$$
where both terms in the above right-hand side are bounded by 
$C \varepsilon^{3/2} \, {\cal N}(f)$. This yields the desired
bound~\eqref{eq:lions-general}. 

\begin{remark}
\label{rem:lions}
Note that if $f$ vanishes on $\partial \Omega$, we can take $\eta^\eps \equiv 1$ and~\eqref{eq:jll} is replaced by 
$$
\| -\Delta \phi \|_{L^2(\Omega_\eps)}
\leq
C \varepsilon \, {\cal N}(f).
$$
Following the same steps as above, we then recover the bound~\eqref{eq:lions}.
\end{remark}

\end{document}